\NewDocumentCommand\DownArrow{O{2.0ex} O{black}}{%
   \mathrel{\tikz[baseline] \draw [<->, line width=0.5pt, #2] (0,0) -- ++(0,#1);}
}
\newtheorem{theorem}{Theorem}[section]
\newtheorem{lemma}[theorem]{Lemma}
\newtheorem{corollary}[theorem]{Corollary}
\newtheorem{proposition}[theorem]{Proposition}
\theoremstyle{example}
\newtheorem{example}[theorem]{Example}
\newtheoremstyle{boldremark}
    {\dimexpr\topsep/2\relax} % space above
    {\dimexpr\topsep/2\relax} % space below
    {}          % body font
    {}          % indent amount
    {\bfseries} % theorem head font
    {.}         % punctuation after theorem head
    {.5em}      % space after theorem head
    {}          % theorem hed spec. (empty = "normal")
\theoremstyle{boldremark}
\newtheorem{remark}[theorem]{Remark} % remarks are numbered within sections
\newenvironment{manualtheorem}[1]{%
  \manualtheoreminner
}{\endmanualtheoreminner}
\DeclareMathOperator{\PF}{\mathrm{PF}}
\DeclareMathOperator{\PPF}{\mathrm{PPF}}
\DeclareMathOperator{\UPF}{\mathrm{UPF}}
\DeclareMathOperator{\unl}{\mathrm{unl}}
\DeclareMathOperator{\lucky}{\mathrm{lucky}}
\DeclareMathOperator{\dis}{\mathrm{dis}}
\DeclareMathOperator{\des}{\mathrm{idoc}}
\DeclareMathOperator{\idoc}{\mathrm{idoc}}
\DeclareMathOperator{\nld}{\mathrm{nld}}
\DeclareMathOperator{\lev}{\mathrm{lev}}
\DeclareMathOperator{\inv}{\mathrm{inv}}
\DeclareMathOperator{\rlm}{\mathrm{rlm}}
\DeclareMathOperator{\lel}{\mathrm{lel}}
\DeclareMathOperator{\nlel}{\mathrm{nlel}}
\DeclareMathOperator{\one}{\mathrm{ones}}
\DeclareMathOperator{\edes}{\mathrm{edes}}
\newcommand{\sR}{\boldsymbol{s}}
\newcommand{\tauR}{\boldsymbol \tau}
\newcommand{\lambdaR}{\boldsymbol \lambda}
\newcommand{\alphaR}{\alpha}
\newcommand{\betaR}{\beta}
\newcommand{\Sym}{\mathfrak{S}}
\newcommand{\PR}{\mathbb P}
\newcommand{\ER}{\mathbb E}
\newcommand{\calt}{\mathcal{T}}
\newcommand{\Var}{\mathbb V \mathrm{ar}}
\newcommand{\be}{\begin{equation}}
\newcommand{\ee}{\end{equation}}
\newcommand{\sn}{\mathfrak{S}_n}
\newcommand{\oc}{\mathrm{oc}}
\newcommand{\rep}{\mathrm{rep}}
\newcommand{\lead}{\mathrm{ldr}}
\newcommand{\pfn}{\mathrm{PF}(n)}
\newcommand{\beas}{\begin{eqnarray*}}
\newcommand{\eeas}{\end{eqnarray*}}
\newcommand{\old}[1]{}
\title{Some Enumerative Properties of Parking Functions}
\author[Stanley]{Richard P. Stanley}
\address[Stanley]{Department of Mathematics, University of Miami, Coral Gables,
FL 33124}
\email{\textcolor{blue}{\href{mailto:mei.yin@du.edu}{rstan@math.mit.edu}}}
\author[Yin]{Mei Yin}
\thanks{M.~Yin was supported by the University of Denver's Professional Research Opportunities for Faculty Fund 80369-145601.}
\address[Yin]{Department of Mathematics, University of Denver, Denver, CO 80208}
\email{\textcolor{blue}{\href{mailto:mei.yin@du.edu}{mei.yin@du.edu}}}
\begin{document}

\keywords{parking function; labelled forest; generating function;
  recurrence; Pollak's circle argument} 

\subjclass[2010]{% Trying my best
05A15; % Exact enumeration problems, generating functions
60C05, % Combinatorial probability
05A19} % Combinatorial identities, bijective combinatorics

\begin{abstract}
A \emph{parking function} is a sequence $(a_1,\dots, a_n)$ 
of positive integers such that if $b_1\leq\cdots\leq b_n$ is the increasing
rearrangement of $a_1,\dots,a_n$, then $b_i\leq i$ for $1\leq i\leq n$.
In this paper we obtain some new results on the enumeration of parking
functions. We will consider the joint distribution of several sets of
statistics on parking functions. The distribution of most of these
individual statistics is known, but the joint distributions are new.
Parking functions of length $n$ are in bijection with labelled forests
on the vertex set $[n]=\{1,2,\dots,n\}$ (or rooted trees on
$[n]_0=\{0,1,\dots,n\}$ with root $0$), so our results can also be
applied to labelled forests. Extensions of our techniques are discussed.
\end{abstract}

\maketitle

% Question for Mei: why is there a newcommand $\backslash$piR for
% $\pi$? Why not just use $\backslash$pi?

\section{Introduction}
In this paper we obtain some new results on the enumeration of parking
functions. We will consider the joint distribution of several sets of
statistics on parking functions. The distribution of most of these
individual statistics is known, but the joint distributions are new.
Parking functions of length $n$ are in bijection with labelled forests
on the vertex set $[n]=\{1,2,\dots,n\}$ (or rooted trees on
$[n]_0=\{0,1,\dots,n\}$ with root $0$), so our results can also be
applied to labelled forests.

We begin with the necessary definitions.  In the classical parking
function scenario due to Konheim and Weiss \cite{k-w}, we have $n$
parking spaces on a one-way street, labelled $1,2,\dots,n$ in
consecutive order as we drive down the street. There are $n$ cars
$C_1,\dots,C_n$. Each car $C_i$ has a preferred space $1\leq a_i\leq
n$. The cars drive down the street one at a time in the order
$C_1,\dots, C_n$. The car $C_i$ drives immediately to space $i$ and
then parks in the first available space. Thus if $i$ is empty, then
$C_i$ parks there; otherwise $C_i$ next goes to space $i+1$, etc. If
all cars are able to park, then the sequence $\pi=(a_1,\dots, a_n)$ is
called a \emph{parking function} of length $n$. It is well-known
and easy to see that if $b_1\leq\cdots\leq b_n$ is the (weakly) increasing
rearrangement of $a_1,\dots,a_n$, then $\pi$ is a parking function if
and only if $b_i\leq i$ for $1\leq i\leq n$. In particular, any
permutation of a parking function is a parking function. Write $\pfn$
for the set of parking functions of length $n$.

The first significant result on parking functions, due to Pyke
\cite{pyke} in another context and then to Konheim and Weiss
\cite{k-w}, is that the number of parking functions of length $n$ is
equal to $(n+1)^{n-1}$. A famous combinatorial proof was given by
Pollak (unpublished but recounted in \cite{Pollak} and
\cite{Pollak2}). It boils down to the following easily verified
statement: let $G$ denote the group of all $n$-tuples
$(a_1,\dots,a_n)\in [n+1]^n$ with componentwise addition modulo
$n+1$. Let $H$ be the subgroup generated by $(1,1,\dots,1)$. Then
every coset of $H$ contains exactly one parking function. Some of our
proofs will be based on generalizations of Pollak's argument.

\forestset{parent color/.style args={#1}{
    {fill=#1},
    for tree={fill/. wrap pgfmath arg={#1!##1}{1/level()*80},draw=#1!80!black}},
    root color/.style args={#1}{fill={{#1!60!black!25},draw=#1!80!black}}
}

Let us review some statistics on parking functions $\pi$ and what was
previously known about their enumeration. For a permutation $b_1 b_2\cdots b_n\in\sn$
(the symmetric group on $[n]$), we define a \emph{descent} to be an index 
$1\leq i\leq n-1$ for which $b_i>b_{i+1}$ 
and a \emph{right-to-left maximum} to be an index $1 \leq i \leq n$
for which $b_i>b_j$ for all $j>i$.

\begin{itemize}

\item Parking outcome: the permutation $\oc(\pi)=b_1 b_2\cdots b_n\in
  \sn$ such that $b_i$ is the space occupied by car $C_i$. Thus if
  $\oc(\pi)^{-1}=c_1 c_2 \cdots c_n$, then $C_{c_i}$ is the car in
  space $i$. For example, if $\pi=(2,2,1,3)$ then $\oc(\pi)=2314$ and
  $\oc(\pi)^{-1}=3124$.
  %\textcolor{red}{What is $w$?}
  
\item Displacement: total number of failed attempts before all cars
  find their parking spaces, denoted $\dis(\pi)$. Note that if
  $\pi=(a_1,\dots,a_n)$ and $\oc(\pi) = b_1 \cdots b_n$, then
  $\dis(\pi) = \sum (b_i-a_i) =\binom{n+1}{2}-\sum a_i$. Thus the
  displacement statistic is equivalent to the ``sum of elements''
  statistic $\sum a_i$. It is known \cite[Thm.~1.4]{Yan} that if
    $$ P_n(q) = \sum_{\pi\in \pfn}q^{\dis(\pi)}, $$
  then
   \beas P_1(q) & = & 1,\\
     P_{n+1}(q) & = & \sum_{i=0}^n \binom ni (q^i+q^{i-1}+\cdots+1)
     P_i(q)P_{n-i}(q). \eeas
   From this one can derive the generating function
   \cite[Thm.~1.6]{Yan}: 
    $$ \sum_{n\geq 1}P_n(q)(q-1)^{n-1}\frac{x^n}{n!} =
      \log\sum_{n\geq 0} q^{\binom n2}\frac{x^n}{n!}. $$
 
\item Unluckiness: total number of cars that fail to park at their
  desired spot, denoted $\unl(\pi)$. The complementary statistic
  ``luckiness'' (denoted $\lucky(\pi)$) for parking 
  functions was studied by Gessel and Seo, who gave an explicit
  formula for the corresponding enumerator \cite[Thm.~10.1]{GS}: 
  $$\sum_{\pi \in \PF(n)} q^{\text{lucky}(\pi)}=q\prod_{i=1}^{n-1} (i+(n-i+1)q).$$

\item Descents: total number of descents in the inverse outcome
  $\oc(\pi)^{-1}$, denoted $\des(\pi)$.  For example, $\pi=(2, 2, 1,
  3)$ yields $\oc(\pi)^{-1}=3124$, so
  $\des(\pi)=1$.
%    \textcolor{red}{After looking more closely, now I
%    don't think $\des$ has been studied before. Kalikow: Symmetries in
%    trees and parking functions studied descents in the outcome (not
%    inverse outcome) and unfavorable spaces (which is also not exactly
%    the same as our ``unluckiness''; there are counter-examples). See
%    page 720 of his paper.} 
  
\item Repeats: total number of cars whose desired spot is the same as 
  that of the previous car, denoted $\rep(\pi)$. Using a bijection
  between parking functions and Pr\"{u}fer code and recognizing that a
  zero in the Pr\"{u}fer code indicates a pair of consecutive like
  numbers in the parking function, an explicit formula for the
  enumerator of parking functions by repeats was established
  \cite[Cor.~1.3]{Yan}: 
  $$\sum_{\pi \in \PF(n)} q^{\rep(\pi)}=(q+n)^{n-1}.$$

\item Leading elements: total number of cars whose desired spot is the
  same as that of the first car, denoted $\lel(\pi)$. This statistic
  has not been considered before. 

\item 1's: total number of cars whose desired spot is spot $1$,
  denoted $\one(\pi)$.

\item Right-to-left maxima: total number of right-to-left maxima in the inverse outcome
  $\oc(\pi)^{-1}$, denoted $\rlm(\pi)$.  For example, $\pi=(2, 2, 1,
  3)$ yields $\oc(\pi)^{-1}=3124$, so
  $\rlm(\pi)=1$.
  
\item Inversions: a pair of vertices $(i,j)$ of a labelled rooted tree
  $T$ with root 0 constitutes an inversion if $i<j$ and $j$ lies on
  the unique path connecting the root vertex to $i$. Let $\calt(n)$
  denote the set of all rooted trees on the vertex set $[n]_0$ with
  root 0. We write $\inv(T)$ for the number of inversions of
  $T\in\calt(n)$. The distribution of inv on trees $T\in\calt(n)$
  coincides with the distribution of dis on parking functions $\pi$ of
  length $n$ \cite[{\S}1.2.2]{Yan}.
  %\textcolor{red}{Not sure if we use
  %  $\calt(n)$ or $\calt_n$? $\calt_n$ is more standard but $\calt(n)$
  %  aligns better with corresponding notation for parking functions? I
  %  used $\PF(n)$ for parking functions because we also need notation
  %  for $(r, k)$-parking functions and $\boldsymbol{u}$-parking functions.} 

\item Leaders (proper vertices): a leader is a vertex of a tree 
  $T\in\calt(n)$ which is the smallest among all the vertices of the
  subtree rooted at this vertex. The root vertex $0$ is not counted,
  either as a leader or as a non-leader. We write $\lead(T)$ for the
  number of leaders of $T$ and $\nld(T)$ for the complementary
  number of non-leaders of $T$. This statistic for trees was studied by
  Gessel and Seo, who gave an explicit formula for the corresponding
  enumerator \cite[Cor.~8]{Seo} \cite[Thm.~6.1]{GS}: 
  $$\sum_{T \in \calt(n)} q^{\lead(T)}=q\prod_{i=1}^{n-1}
  (i+(n-i+1)q).$$ See also Seo and Shin \cite{SS} for a simple
  bijective proof of Gessel and Seo's formula using a variation of
  Pr\"{u}fer code.
  % \textcolor{red}{define $\nld$ somewhere.} 

\item Leaves: a leaf is a vertex of a tree $T\in\calt(n)$ with no
  children. A single root is also considered to be a leaf.
  We write $\lev(T)$ for the number of leaves of $T$.
  %\textcolor{red}{define $\lev$ somewhere.}

\item Root degree: The number of children of the root vertex in a labelled 
rooted tree $T$ with root 0, denoted $\deg_T(0)$.

\item Edge descents: an edge descent of a tree $T\in\calt(n)$ is an
  edge of $T$ whose vertex nearer the root is larger than the other
  vertex. The number of edge descents of $T$ is denoted $\edes(T)$. 
  
%\item Parents: Vertex $i+1$ is a child of the root vertex $0$ and
%vertex $i$ is a descendant of vertex $i+1$. 

%\item Root degree: Degree of the root vertex $0$.
\end{itemize}

  Here is a summary of our notation just discussed.

\begin{tabbing}
 aaa\=aaaaaaaa\=aaaaaaaaaaaaaaaaaaaaaaaaaaaaaaaaaaaaaaaa\=aaaaaaaaa\=aaaaaaaaaaaaaaaaaaaaaaaa\=aaaaa\=\kill
 \>$\oc(\pi)$:\> parking outcome \> $\dis(\pi)$:\> displacement\\
 \>$\unl(\pi)$:\> unluckiness \> $\lucky(\pi)$:\> luckiness\\
  \> $\idoc(\pi)$:\> number of descents of
   $\oc(\pi)^{-1}$
 \>$\rep(\pi)$:\> repeats\\ \>$\lel(\pi)$:\> leading elements
 \>$\one(\pi)$:\> 1's \\ \>$\rlm(\pi)$: \> number of right-to-left maxima of
   $\oc(\pi)^{-1}$ \> $\inv(T)$: \> inversions\\ 
 \>$\nld(T)$:\> non-leaders
 \> $\lev(T)$:\> leaves\\  \>$\deg_T(0)$: \> root degree 
 \>$\edes(T)$:\> edge descents\\
\end{tabbing}

\section{A recurrence relation for refined trees and parking functions}

Define
 $$ P_n(x,y,z,w):=\sum_{\pi \in \mathrm{PF}(n)} x^{\unl(\pi)} y^{\dis(\pi)}
   z^{\des(\pi)} w^{\mathrm{rlm}(\pi)}$$
and 
  $$ Q_n(x,y,z,w):=\sum_{T \in \mathcal{T}(n+1)}
    x^{\nld(T)} y^{\inv(T)}
    z^{\lev(T)-1} w^{\mathrm{deg}_T(0)}. $$
We will show that $P_n(x,y,z,w)=Q_n(x,y,z,w)$ by showing that they
satisfy the same recurrence relation. By convention $P_0(x, y, z,
w)=Q_0(x,y,z,w)=1$.  

\begin{theorem}\label{un1-general}
(a) $P_n(x, y, z, w)$ satisfies the following recurrence relation:
\begin{multline*}
P_{n}(x, y, z, w)=(1+xy+\cdots+xy^{n-1})w P_{n-1}(x, y, z, 1)\\ 
+\sum_{i=0}^{n-2} \binom{n-1}{i} (1+xy+\cdots+xy^i)zw P_i(x, y, z, 1)
P_{n-i-1}(x, y, z, w).\ \ \
\end{multline*}
(b) $Q_n(x, y, z, w)$ satisfies the following recurrence relation:
\begin{multline*}
Q_n(x, y, z, w)=(1+xy+\cdots+xy^{n-1})w Q_{n-1}(x, y, z, 1)\\ 
+\sum_{i=0}^{n-2} \binom{n-1}{i} (1+xy+\cdots+xy^i)zw Q_i(x, y, z, 1)
Q_{n-i-1}(x, y, z, w).\ \ \
\end{multline*}
\end{theorem}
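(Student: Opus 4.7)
The plan is to prove each recurrence by a parallel combinatorial decomposition. In (a) I decompose $\pi\in\PF(n)$ according to the spot $m:=b_n$ where the last car $C_n$ parks, and in (b) I decompose $T\in\calt(n)$ according to the unique child $v^*$ of the root $0$ lying on the path from $0$ to vertex $n$. In both cases the ``distinguished element'' (the pair $(a_n,m)$ on the parking-function side, the vertex $v^*$ on the tree side) will contribute a factor of the form $(1+xy+\cdots+xy^{k-1})w$, the two remaining pieces will each yield a $P$ (resp.\ $Q$), and separating ``right piece empty/trivial'' from ``right piece nontrivial'' will produce the two displayed summands of each recurrence.

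For (a), fix $\pi\in\PF(n)$ and set $m=b_n$. Since $C_n$ arrives last at the empty spot $m$, no $C_i$ with $i<n$ has $a_i=m$; the $m-1$ cars with $a_i<m$ fill $[1,m-1]$ and on relabeling form a parking function $\pi_L\in\PF(m-1)$, while the $n-m$ cars with $a_i>m$ fill $[m+1,n]$ and after shifting preferences and spots down by $m$ form $\pi_R\in\PF(n-m)$; the preference $a_n$ can be any element of $[1,m]$. I will verify the additive decompositions
\[
\unl(\pi)=\unl(\pi_L)+\unl(\pi_R)+[a_n\neq m],\qquad \dis(\pi)=\dis(\pi_L)+\dis(\pi_R)+(m-a_n),
\]
\[
\idoc(\pi)=\idoc(\pi_L)+\idoc(\pi_R)+[m\le n-1],\qquad \rlm(\pi)=1+\rlm(\pi_R),
\]
by reading off $\oc(\pi)^{-1}=c_1\cdots c_n$ with $c_m=n$: positions $<m$ cannot be right-to-left maxima (so $\rlm(\pi_L)$ makes no contribution and $\pi_L$ appears via $P_{m-1}(x,y,z,1)$), and position $m$ is an $\idoc$-descent precisely when $m\le n-1$. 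Summing $x^{[a_n\neq m]}y^{m-a_n}$ over $a_n\in[1,m]$ produces $1+xy+\cdots+xy^{m-1}$, selecting the $m-1$ indices of $L$ contributes $\binom{n-1}{m-1}$, and isolating $m=n$ from $m\le n-1$ (with $i:=m-1$) yields the two terms in the stated form.

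For (b), fix $T\in\calt(n)$, let $v^*$ be the child of $0$ on the path from $0$ to $n$, let $T_{v^*}$ (having $k$ vertices, including $n$) be its subtree, and let $T'=T\setminus T_{v^*}$. Setting $\ell:=|\{u\in V(T_{v^*})\setminus\{v^*\}:u<v^*\}|$, I will verify
\[
\nld(T)=\nld(T_{v^*})+\nld(T')+[\ell>0],\qquad \inv(T)=\inv(T_{v^*})+\inv(T')+\ell,
\]
\[
\lev(T)-1=(\lev(T_{v^*})-1)+(\lev(T')-1)+[k\le n-1],\qquad \deg_T(0)=1+\deg_{T'}(0),
\]
where $[\ell>0]$ records that $v^*$ is a non-leader exactly when a strictly smaller vertex lies in its subtree, the $\ell$ counts the inversions $(u,v^*)$ with $u$ a descendant of $v^*$ and $u<v^*$ (one checks that no mixed inversions across $T_{v^*}$ and $T'$ exist), and the indicator $[k\le n-1]$ in the $\lev$ line arises because $0$ is a leaf of $T'$ precisely in the exceptional case $T'=\{0\}$, i.e., $k=n$. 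The key algebraic step is the identity
\[
\sum_{v^*\in V^*}x^{[\ell>0]}y^{\ell}=1+xy+xy^2+\cdots+xy^{k-1}
\]
for every $V^*\subseteq[n]$ of size $k$, which holds because as $v^*$ ranges over $V^*$, $\ell$ equals the rank of $v^*$ in $V^*$ and runs through $\{0,1,\ldots,k-1\}$ exactly once. Reparametrizing by $V^*=\{n\}\cup S$ with $S\subseteq[n-1]$ and $|S|=k-1$ (contributing $\binom{n-1}{k-1}$), summing over the relabeled subtree structures ($Q_{k-1}(x,y,z,1)$, with $w=1$ since the subtree's root degree is not tracked) and over the relabeled remainder structures ($Q_{n-k}(x,y,z,w)$), and then separating $k=n$ from $k\le n-1$, produces the stated recurrence.

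The main obstacle I anticipate is the $\lev$ decomposition in (b): it is the one statistic that forces a case distinction, since in the trivial case $T'=\{0\}$ the vertex $0$ is itself a leaf of $T'$, whereas in the nontrivial case it is not, and this switch is precisely what creates the indicator $[k\le n-1]$ distinguishing the two terms of the recurrence. The other statistic identities—the four parking-function ones in (a) and the $\nld$, $\inv$, $\deg$ ones in (b)—decompose purely additively, so once the statistic decompositions are established the derivation of each recurrence reduces to a mechanical rearrangement of the resulting double sum.
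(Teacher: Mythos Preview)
Your proof is correct and takes essentially the same approach as the paper. Both arguments decompose $\pi$ according to the parking spot $m=b_n$ of the last car in (a) and decompose $T$ according to the child of $0$ on the $0$-to-$n$ path in (b), verify the same additive decompositions of the four statistics (with the $\idoc$/$\lev$ statistic forcing the case split that produces the extra factor of $z$), and then assemble the generating function; your write-up is somewhat more explicit in stating each statistic identity as an equation, but the underlying combinatorics is identical to the paper's.
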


\begin{proof}
(a) Cars $1, \dots, n-1$ have all parked along the
street before the $n$th car enters, leaving only one open spot for the
$n$th car to park. We denote this spot by $i+1$, where $i=0, \dots,
n-1$. Since a car cannot jump over an empty spot, the parking protocol
implies that $(\pi_1, \dots, \pi_{n-1})$ may be decomposed into two
parking functions $\alphaR \in \PF(i)$ and $\betaR \in \PF(n-1-i)$,
where $\betaR$ is formed by subtracting $i+1$ from the relevant
entries in $\pi$, and $\alphaR$ and $\betaR$ do not interact with
each other. The binomial coefficient $\binom{n-1}{i}$ chooses an
$i$-element subset of $[n-1]$ to constitute the index set of
$\alphaR$. 
 
See Figure \ref{pf:illustration}. This open spot $i+1$ could be either
the same as $j$, the preference of the last car, in which case the car
parks directly. That is where the $1$ comes from. Or, $i+1$ could be
larger than $j$, in which case the car travels forward to park,
contributing to the increase of both displacement and
unluckiness. That is why we have $xy+xy^2+\cdots+xy^i$. In more
detail, we have: 
\begin{equation*}
\dis(\pi)=\dis(\alphaR)+\dis(\betaR)+(i+1-j),
\end{equation*}
\begin{equation*}
\unl(\pi)=\unl(\alphaR)+\unl(\betaR)+(\mathrm{either}\ 1\ \mathrm{or}\ 0),
\end{equation*}
where it is $1$ if $j<i+1$ and $0$ if $j=i+1$. The $z$ factor comes
into play when the last car does not end up parking at the last spot;
whether its preference is equal to the parking outcome does not
matter. Lastly, for the $w$ factor, we note that $P_i$ corresponds to 
the total number of rlm cars that are parked to the left of the last car, 
and $P_{n-i-1}$ corresponds to the total number of rlm cars that are 
parked to the right of the last car. Now the last car must be an rlm, 
so after it enters, the total number of rlm cars that are parked to the 
left of it does not matter anymore, and the rlm cars of $\pi$ now 
consists of the last car and the rlm cars that are parked to the right of it.

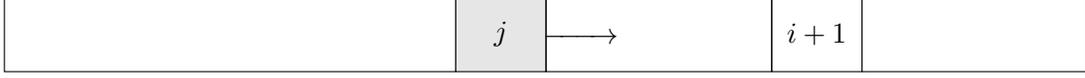
\begin{figure}
\centering
\begin{tikzpicture}
    \def\bh{1} %box height
    \def\sepw{1pt} %seperator line width
    \def\ta{6}
    \draw (\ta,0.5 - 0.5 * \bh ) rectangle (\ta+\ta, 0.5 + 0.5 * \bh ) node[pos=.5] {};
    %\draw [line width = \sepw] (\ta+\ta,0) -- (\ta+\ta, 1);
    \draw [fill=gray!20](\ta+\ta,0.5 - 0.5 * \bh ) rectangle (\ta+\ta+0.2*\ta, 0.5 + 0.5 * \bh ) node[pos=.5] {$j$};
    %\draw [line width = \sepw] (\ta+\ta+0.2*\ta,0) -- (\ta+\ta+0.2*\ta, 1);
    \draw (\ta+\ta+0.2*\ta,0.5 - 0.5 * \bh ) rectangle (\ta+\ta+0.2*\ta+0.5*\ta, 0.5 + 0.5 * \bh ) node[pos=.5] {};
    %\draw [line width = \sepw] (\ta+\ta+0.2*\ta+0.5*\ta,0) -- (\ta+\ta+0.2*\ta+0.5*\ta, 1);
    \draw (\ta+\ta+0.2*\ta+0.5*\ta,0.5 - 0.5 * \bh ) rectangle (\ta+\ta+0.2*\ta+0.5*\ta+0.2*\ta, 0.5 + 0.5 * \bh ) node[pos=.5] {$i+1$};
    %\draw [line width = \sepw] (\ta+\ta+0.2*\ta+0.5*\ta+0.2*\ta,0) -- (\ta+\ta+0.2*\ta+0.5*\ta+0.2*\ta, 1);
    \draw (\ta+\ta+0.2*\ta+0.5*\ta+0.2*\ta,0.5 - 0.5 * \bh ) rectangle (\ta+\ta+0.2*\ta+0.5*\ta+0.2*\ta+0.5*\ta, 0.5 + 0.5 * \bh ) node[pos=.5] {};
    \node[anchor=north east] at (\ta+\ta+0.383*\ta,0.65) {\parbox{1cm}{\rightarrowfill}};
\end{tikzpicture}
\caption{Parking function case.}
\label{pf:illustration}
\end{figure}

\bigskip

(b) Consider a tree $T$ with $n+1$ vertices $0, 1, \dots, n$
rooted at $0$. If in the unique chain joining $0$ to $n$ we remove the
edge from $0$, which we denote by $0r$, one obtains (a) a tree $T'$
with $n-i$ vertices of which one is $0$, and (b) a tree $T''$ with
$i+1$ vertices of which one is $r$, where $0\leq i\leq n-1$. See
Figure \ref{tree:illustration}. Recall that $n$ is automatically a
vertex of $T''$ by construction. The binomial coefficient
$\binom{n-1}{i}$ chooses $i$ vertices out of $[n+1]_0 \setminus \{0,
n\}$ to form $T''$ and the remaining vertices of $T$ will constitute
$T'$. Changing the vertex labels of the descendants of $0$ in $T'$ and
of $r$ in $T''$ to their relative order among all the descendants of
the tree will not affect the quadruple statistics (non-leaders,
inversions, leaves-1, degree of root vertex) contribution from these vertices. $T'$ is rooted
at $0$ and so the quadruple statistics  contribution from $T'$ is
$Q_{n-i-1}(x, y, z, w)$. $T''$ is rooted at $r$ not $0$, but we could
further change the label of vertex $r$ to $0$ (hence the contribution
of $Q_i(x, y, z, w)$) and then add back the contribution from vertex
$r$. 

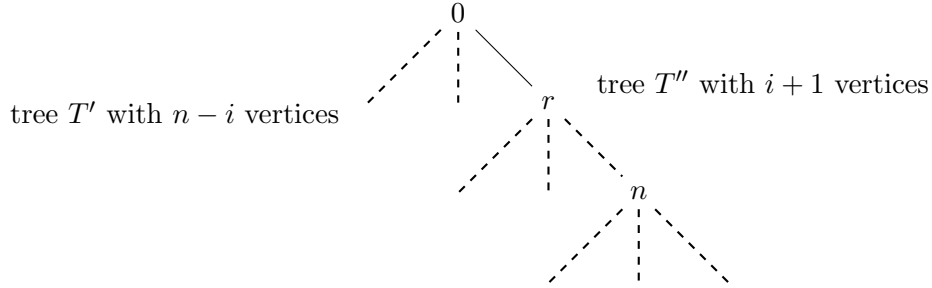
\begin{figure}
\centering
\begin{tikzpicture}[scale=0.8]
\node at (0,0) {$0$} [grow = down]
    child {edge from parent [dashed, thick]}
    child {edge from parent [dashed, thick]}
    child {node {$r$} child{edge from parent
        [dashed, thick]} child{edge from parent [dashed,
        thick]} child{node {%\textcolor{red!60}
        $n$} edge from parent [dashed, thick] child{edge from parent [dashed,
          thick]} child{edge from parent [dashed,
            thick]} child{edge from parent [dashed, thick]}}};
%\draw[color=cyan, fill=none, thick](-0.9,-0.3) circle (1.5);
%\draw[color=red!60, fill=none, thick](2.5,-3) circle (2.5);
\node[anchor=north east] at (8, -0.8) {tree $T''$ with $i+1$ vertices};
\node[anchor=north east] at (-1.8,-1.3) {tree $T'$ with $n-i$ vertices};
\end{tikzpicture}
\caption{Tree case.}
\label{tree:illustration}
\end{figure}

The additional factor $1+xy+\cdots+xy^i$ indicates the double
statistics (non-leaders, inversions) contribution from the relative
order of $r$ in tree $T''$. When $r$ is the smallest vertex in $T''$,
it is a leader and does not contribute to inversions. In all other
cases, $r$ is a non-leader and depending on how many descendants of
$r$ are less than $r$, the inversion contribution from $r$ would be
different, ranging from $y$ to $y^i$ as there are $i$ descendants of
$r$ in total. Next we consider the effect of merging the two trees
$T'$ and $T''$ on the single statistic (leaves-1). If $i=n-1$, we are
merely affixing $0$ to $r$ as the new root of the tree and the
(leaves-1) statistic stays the same. Otherwise there is an extra
factor of $z$ because
$\lev(T)-1=\lev(T')+\lev(T'')-1=1+(\lev(T')-1)+(\lev(T'')-1)$.
Lastly, for the $w$ factor, we note that $Q_{n-i-1}$ corresponds 
to the degree of vertex $0$ in tree $T'$ and $Q_i$ corresponds to 
the degree of vertex $r$ in tree $T''$. For the entire tree $T$, 
the degree of vertex $0$ is always the degree of vertex $0$ in 
tree $T'$ plus $1$, the degree of vertex $r$ in tree $T''$ no longer matters.

\smallskip

Note that the above ``break apart'' argument in both the parking function case and
the tree case could be repeated, which corresponds to the
iterative process described by this recurrence formula.
\end{proof}

Under the classical parking protocol, when a car's desired spot is
taken, it always drives forward to look for an available parking
spot. Let us incorporate a probabilistic scenario in the parking
protocol. Fix $p\in [0, 1]$ and consider a coin which flips to heads
with probability $p$ and tails with probability $1-p$. Our
probabilistic parking protocol proceeds as follows: if a car arrives
at its preferred spot and finds it unoccupied it parks there. If
instead the spot is occupied, then the driver tosses the biased
coin. If the coin lands on heads, with probability $p$, the driver
continues moving forward in the street. However, if the coin lands on
tails, with probability $1-p$, the car moves backward and tries to
find an unoccupied parking spot. When $p=1$, the probabilistic parking
model reduces to the classical parking model. See \cite{DHHRY} for
more details of this setup. We will see later that there is a deep
implication of this probabilistic scenario on the parking protocol; it is
not merely an artificial twist that is being added. 

In the case of classical parking functions, a preference vector $\pi
\in [n]^n$ is either deterministically a parking function or not, and
we can talk about the set of parking functions $\PF(n)$ and its
cardinality $\vert\PF(n)\vert$. Contrarily, by incorporating a
probabilistic parking protocol, all preference vectors $\pi \in
[n]^n$ have a positive probability of being a parking function, and we
denote by $\PR(\pi \in \PF(n))$ the probability that $n$ cars with
preference vector $\pi$ park. 

For example, consider parking preference $\pi=(2, 2, 2) \in
[3]^3$. In the classical parking model, $\pi$ is deterministically
not a parking function. However, under the probabilistic scenario, this
determistic conclusion fails. Car $1$ directly parks at its desired
spot $2$. Now car $2$ enters, its preference spot is taken by car $1$,
so car $2$ would either move forward with probability $p$ and park at
spot $3$ or move backward with probability $1-p$ and park at spot
$1$. Next car $3$ enters, in the first situation, it will move
backward with probability $1-p$ to park at spot $1$, while in the
second situation, it will move forward with probability $p$ to park at
spot $3$. We see that both the first situation and the second
situation happen with probability $p(1-p)$, but for different
reasons. Altogether, $\PR((2, 2, 2) \in \PF(3))=2p(1-p)$. 

%\vskip.1truein
%
%\textcolor{red}{Write $(\pi)$.}

Let
 $$\sum_{\pi} x^{\unl(\pi)} y^{\dis(\pi)} z^{\des(\pi)}
  w^{\rlm(\pi)} \hspace{.1cm} \PR(\pi \in \PF(n)):=P'_n(x, y, z,
   w).$$ 
\begin{manualtheorem}{\ref{un1-general}'}\label{un1-general-alternate}
$P'_n(x, y, z, w)$ satisfies the following recurrence relation:
\begin{multline}\label{general-p}
P'_{n}(x, y, z, w)=(1+p(xy+\cdots+xy^{n-1}))w P'_{n-1}(x, y, z, 1)\\
+\sum_{i=0}^{n-2} \binom{n-1}{i} (1+p(xy+\cdots+xy^i)+(1-p)(xy+\cdots+xy^{n-i-1}))zw P'_i(x, y, z, 1) P'_{n-i-1}(x, y, z, w).
\end{multline}
\end{manualtheorem}

\begin{proof}
The proof is almost identical to the proof of Theorem
\ref{un1-general}. But note that under the probabilistic scenario, when a
car's desired spot is taken, it could either move forward with
probability $p$ or backward with probability $1-p$, there are thus
three possible scenarios for the preference of the last car, which we
denote by $j$, as compared with the open spot, which we denote by
$i+1$. Either $j=i+1$ (giving the contribution of $1$), or $j<i+1$
(giving the contribution of $p(xy+\cdots+xy^i)$), or $j>i+1$ (giving
the contribution of $(1-p)(xy+\cdots+xy^{n-i-1})$). See Figure
\ref{pf-prob:illustration}. 
\end{proof}

\begin{figure}
\centering
\begin{tikzpicture}
    \def\bh{1} %box height
    \def\sepw{1pt} %seperator line width
    \def\ta{4.8}
    \draw (\ta,0.5 - 0.5 * \bh ) rectangle (\ta+\ta, 0.5 + 0.5 * \bh ) node[pos=.5] {};
    %\draw [line width = \sepw] (\ta+\ta,0) -- (\ta+\ta, 1);
    \draw [fill=gray!20](\ta+\ta,0.5 - 0.5 * \bh ) rectangle (\ta+\ta+0.2*\ta, 0.5 + 0.5 * \bh ) node[pos=.5] {$j$};
    %\draw [line width = \sepw] (\ta+\ta+0.2*\ta,0) -- (\ta+\ta+0.2*\ta, 1);
    \draw (\ta+\ta+0.2*\ta,0.5 - 0.5 * \bh ) rectangle (\ta+\ta+0.2*\ta+0.5*\ta, 0.5 + 0.5 * \bh ) node[pos=.5] {};
    %\draw [line width = \sepw] (\ta+\ta+0.2*\ta+0.5*\ta,0) -- (\ta+\ta+0.2*\ta+0.5*\ta, 1);
    \draw (\ta+\ta+0.2*\ta+0.5*\ta,0.5 - 0.5 * \bh ) rectangle (\ta+\ta+0.2*\ta+0.5*\ta+0.2*\ta, 0.5 + 0.5 * \bh ) node[pos=.5] {$i+1$};
    %\draw [line width = \sepw] (\ta+\ta+0.2*\ta+0.5*\ta+0.2*\ta,0) -- (\ta+\ta+0.2*\ta+0.5*\ta+0.2*\ta, 1);
    \draw (\ta+\ta+0.2*\ta+0.5*\ta+0.2*\ta,0.5 - 0.5 * \bh ) rectangle (\ta+\ta+0.2*\ta+0.5*\ta+0.2*\ta+0.5*\ta, 0.5 + 0.5 * \bh ) node[pos=.5] {};
    \node[anchor=north east] at (\ta+\ta+0.428*\ta,0.65) {\parbox{1cm}{\rightarrowfill}};
    \node[anchor=north east] at (\ta+\ta+0.345*\ta,0.95) {$p$};
    %\draw [line width = \sepw] (\ta+\ta+0.2*\ta+0.5*\ta+0.2*\ta+0.5*\ta,0) -- (\ta+\ta+0.2*\ta+0.5*\ta+0.2*\ta+0.5*\ta, 1);
    \draw [fill=gray!20] (\ta+\ta+0.2*\ta+0.5*\ta+0.2*\ta+0.5*\ta,0.5 - 0.5 * \bh ) rectangle (\ta+\ta+0.2*\ta+0.5*\ta+0.2*\ta+0.5*\ta+0.2*\ta, 0.5 + 0.5 * \bh ) node[pos=.5] {$j$};
    %\draw [line width = \sepw] (\ta+\ta+0.2*\ta+0.5*\ta+0.2*\ta+0.5*\ta+0.2*\ta,0) -- (\ta+\ta+0.2*\ta+0.5*\ta+0.2*\ta+0.5*\ta+0.2*\ta, 1);
    \draw (\ta+\ta+0.2*\ta+0.5*\ta+0.2*\ta+0.5*\ta+0.2*\ta,0.5 - 0.5 * \bh ) rectangle (\ta+\ta+0.2*\ta+0.5*\ta+0.2*\ta+0.5*\ta+0.2*\ta+0.5*\ta, 0.5 + 0.5 * \bh ) node[pos=.5] {};
    \node[anchor=north east] at (\ta+\ta+0.2*\ta+0.5*\ta+0.2*\ta+0.5*\ta+0.035*\ta,0.65) {\parbox{1cm}{\leftarrowfill}};
    \node[anchor=north east] at (\ta+\ta+0.2*\ta+0.5*\ta+0.2*\ta+0.4*\ta,0.45) {$1$}; \node[anchor=north east] at (\ta+\ta+0.2*\ta+0.5*\ta+0.2*\ta+0.5*\ta,0.45) {$-p$};
\end{tikzpicture}
\caption{Parking function case under probabilistic scenario.}
\label{pf-prob:illustration}
\end{figure}
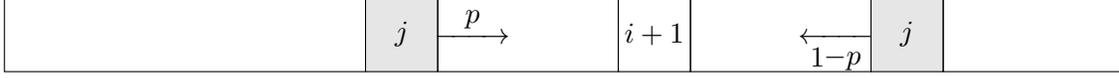

Setting $z=1$ and $w=1$ in (\ref{general-p}) and for simplicity write $P'_n(x, y, 1, 1)$ by $P'_n(x, y)$, interestingly, the $p$ dependence disappears.

\begin{corollary}\label{cancelation}
$P'_n(x, y)$ is independent of $p$ and satisfies the following recurrence relation:
\begin{equation*}
P'_{n}(x, y)=\sum_{i=0}^{n-1} \binom{n-1}{i} (1+xy+\cdots+xy^i) P'_i(x, y) P'_{n-i-1}(x, y).
\end{equation*}
\end{corollary}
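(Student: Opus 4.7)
The plan is to prove this by induction on $n$, treating (\ref{general-p}) as the governing recurrence and showing that when $z = w = 1$ the $p$-dependence drops out at each step. The base case $P'_0(x,y) = 1$ is trivial. For the inductive step, I assume $P'_k(x,y)$ is independent of $p$ for every $k<n$; I then need only verify that the right-hand side of (\ref{general-p}) at $z=w=1$ is independent of $p$, since this will simultaneously produce the claimed recurrence.

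Write $S_k := xy + xy^2 + \cdots + xy^k$ for $k \geq 1$ and $S_0 := 0$. Setting $z=w=1$ in (\ref{general-p}) gives
\begin{align*}
P'_n(x,y) = (1+pS_{n-1})P'_{n-1}(x,y) + \sum_{i=0}^{n-2}\binom{n-1}{i}\bigl(1+pS_i+(1-p)S_{n-i-1}\bigr)P'_i(x,y)P'_{n-i-1}(x,y),
\end{align*}
while the desired recurrence, once we absorb the $i=n-1$ contribution into the sum using $P'_0 = 1$, reads $\sum_{i=0}^{n-1}\binom{n-1}{i}(1+S_i)P'_iP'_{n-i-1}$. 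Subtracting the target from the expression above and using the inductive hypothesis to treat the $P'_k$'s as $p$-free, the difference factors as $(1-p)$ times
\[
\Delta_n \;:=\; \sum_{i=0}^{n-2}\binom{n-1}{i}(S_{n-i-1}-S_i)P'_iP'_{n-i-1} - S_{n-1}P'_{n-1}.
\]
Consequently the entire proof reduces to the identity $\Delta_n = 0$.

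I would establish $\Delta_n = 0$ by reindexing. Substituting $j = n-1-i$ in $\sum_{i=0}^{n-2}\binom{n-1}{i}S_{n-i-1}P'_iP'_{n-i-1}$, and using both $\binom{n-1}{i} = \binom{n-1}{n-1-i}$ and the symmetry $P'_iP'_{n-i-1} = P'_{n-1-j}P'_j$, converts this sum into $\sum_{j=1}^{n-1}\binom{n-1}{j}S_jP'_jP'_{n-1-j}$. On the other hand, because $S_0 = 0$, the sum $\sum_{i=0}^{n-2}\binom{n-1}{i}S_iP'_iP'_{n-i-1}$ effectively runs over $1 \le i \le n-2$. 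These two sums share every term with indices in $[1,n-2]$, so their difference is precisely the surviving $j=n-1$ contribution $\binom{n-1}{n-1}S_{n-1}P'_{n-1}P'_0 = S_{n-1}P'_{n-1}$, which cancels the stand-alone term in $\Delta_n$.

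The only real obstacle is boundary bookkeeping: one must identify the separated term $(1+pS_{n-1})P'_{n-1}$ of (\ref{general-p}) with the missing $i=n-1$ summand of the target, and carefully track how the reindexing $i\leftrightarrow n-1-i$ interacts with the truncation of the sum at $n-2$. Everything else is formal — the symmetry of the binomial coefficients together with the vanishing $S_0=0$ produces the cancellation in one line, and the inductive hypothesis licenses treating $P'_iP'_{n-i-1}$ as a $p$-independent factor throughout.
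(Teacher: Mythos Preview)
Your proof is correct and follows essentially the same approach as the paper's: both exploit the symmetry $i\leftrightarrow n-1-i$ of the summands to cancel the $p$-dependence, invoking induction to treat the $P'_i$ as $p$-free. Your reindexing formulation via $\Delta_n$ is marginally slicker in that it avoids the paper's explicit parity split (pairing $\{i,n-1-i\}$ with a separate middle term when $n$ is odd), but the underlying mechanism is identical.
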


\begin{proof}
By convention $P'_0(x, y)=1$. From Theorem \ref{un1-general-alternate}, for $n\geq 1$,
\begin{equation*}
P'_{n}(x, y)=\sum_{i=0}^{n-1} \binom{n-1}{i} (1+p(xy+\cdots+xy^i)+(1-p)(xy+\cdots+xy^{n-i-1})) P'_i(x, y) P'_{n-i-1}(x, y).
\end{equation*}
We note that the index set $\{0, \dots, n-1\}$ for $i$ may be
partitioned into
  $$\{0, n-1\}, \{1, n-2\}, \dots,
    \left\{\frac{n-3}{2}, \frac{n+1}{2}\right\}, \left\{\frac{n-1}{2}\right\} \
   \text{if $n$ is odd},$$
and
   $$\{0, n-1\}, \{1, n-2\}, \dots,
     \left\{\frac{n-2}{2}, \frac{n}{2}\right\} \  \text{if $n$ is even}.$$ 
We group the summands based on the above partition:
\begin{align*}
&\binom{n-1}{i} (1+p(xy+\cdots+xy^i)+(1-p)(xy+\cdots+xy^{n-i-1})) P'_i(x, y) P'_{n-i-1}(x, y)+ \notag \\
&\ \binom{n-1}{n-i-1} (1+p(xy+\cdots+xy^{n-i-1})+(1-p)(xy+\cdots+xy^i)) P'_{n-i-1}(x, y) P'_i(x, y) \notag \\
=&\binom{n-1}{i} \left((1+xy+\cdots+xy^i)+(1+xy+\cdots+xy^{n-i-1})\right) P'_i(x, y) P'_{n-i-1}(x, y).
\end{align*}
When $n$ is odd, there is one extra term:
\begin{align*}
&\binom{n-1}{\frac{n-1}{2}} (1+p(xy+\cdots+xy^{\frac{n-1}{2}})+(1-p)(xy+\cdots+xy^{\frac{n-1}{2}})) P'_{\frac{n-1}{2}}(x, y) P'_{\frac{n-1}{2}}(x, y) \notag \\
=&\binom{n-1}{\frac{n-1}{2}} (1+xy+\cdots+xy^{\frac{n-1}{2}}) P'_{\frac{n-1}{2}}(x, y) P'_{\frac{n-1}{2}}(x, y).
\end{align*}
By induction our proof is complete.
\end{proof}

\begin{remark} 
We verified the cancellation of $p$ dependence in Corollary
\ref{cancelation} via direct computation, but there is a deeper reason
behind this disappearance of $p$ which we will later elaborate upon. 
\end{remark}

Take $m\leq n$. Recall the bijection between parking functions
$\text{PF}(m,n)$ with $m$ cars and $n$ spots and spanning forests
$\mathcal{F}(n+1,n-m+1)$ with $n+1$ vertices and $n-m+1$ distinct
trees having specified roots \cite{KY}: for a parking function $\pi
\in \PF(m, n)$, there are $n-m$ parking spots that are never attempted
by any car. Let $k_i(\pi)$ for $i=1, \dots, n-m$ represent these
spots, so that $0:=k_0<k_1<\cdots < k_{n-m}<k_{n-m+1}:=n+1$. This
separates $\pi$ into $n-m+1$ disjoint noninteracting segments, with
each segment a classical parking function of length $k_i-k_{i-1}-1$
after translation. Each of these classical parking functions
corresponds to a rooted tree in the forest $F \in
\mathcal{F}(n+1,n-m+1)$. From Theorem
\ref{un1-general}, we may let 
\begin{align*}
\sum_{\pi \in \text{PF}(m, n)} x^{\unl(\pi)} y^{\dis(\pi)} z^{\des(\pi)} w^{\rlm(\pi)}
=&\sum_{F \in \mathcal{F}(n+1, n+1-m)} x^{\nld(F)} y^{\inv(F)}
z^{\lev(F)-1} w^{\deg_T(0)}\\:=&P_{m, n}(x, y, z, w). 
\end{align*}
Similarly, under the probabilistic scenario, let
\begin{equation*}
\sum_{\pi} x^{\unl(\pi)} y^{\dis(\pi)} z^{\des(\pi)}
w^{\rlm(\pi)} \hspace{.1cm} \PR(\pi \in \PF(m, n)):=P'_{m, n}(x, y, z,
w). 
\end{equation*}
The following propositions are easy generalizations of Theorem
\ref{un1-general} and Theorem \ref{un1-general-alternate} based upon
the above correspondence, where we take $\sR=(k_1-k_0-1, \dots,
k_{n-m+1}-k_{n-m}-1)$. 

\begin{proposition}
\begin{equation*}
P_{m, n}(x, y, z, w)=\sum_{\sR \in C(m)} \binom{m}{\sR} \prod_{i=1}^{n-m+1} P_{s_i}(x, y, z, w),
\end{equation*}
where $C(m)$ consists of compositions of $m$: $\sR=(s_1, \dots,
s_{n-m+1}) \models m$ with $\sum_{i=1}^{n-m+1} s_i=m$. 
\end{proposition}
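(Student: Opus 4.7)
\emph{Proof plan.} My plan is to decompose any $\pi \in \PF(m,n)$ into independent segments delimited by the untouched spots, observe that each of the four tracked statistics splits additively over those segments, and then sum the resulting contributions over all possible segment profiles $\sR$.

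Given $\pi \in \PF(m,n)$, I record the $n-m$ never-attempted spots and write $0 =: k_0 < k_1 < \cdots < k_{n-m} < k_{n-m+1} := n+1$. For each $i \in \{1,\dots,n-m+1\}$, the spots in $(k_{i-1}, k_i)$ form a block of length $s_i := k_i - k_{i-1} - 1$; the cars that park there, with preferences translated by $-k_{i-1}$, constitute a classical parking function $\pi^{(i)} \in \PF(s_i)$. The resulting vector $\sR = (s_1, \ldots, s_{n-m+1})$ belongs to $C(m)$, the set of (weak) compositions of $m$ into $n-m+1$ nonnegative parts, and from this data $\pi$ can be reconstructed uniquely. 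Because no car ever crosses an untouched spot, the blocks evolve independently, so that $\unl(\pi) = \sum_i \unl(\pi^{(i)})$ and $\dis(\pi) = \sum_i \dis(\pi^{(i)})$ are immediate; the statistics $\idoc$ and $\rlm$ on $\PF(m,n)$ are defined consistently with their forest counterparts (which are manifestly additive over the $n-m+1$ component trees of $F \in \mathcal{F}(n+1, n-m+1)$), so $\idoc(\pi) = \sum_i \idoc(\pi^{(i)})$ and $\rlm(\pi) = \sum_i \rlm(\pi^{(i)})$ hold likewise. Fixing $\sR \in C(m)$, a parking function with profile $\sR$ is then specified by (i) choosing which $s_i$ cars from $[m]$ populate each block, contributing the multinomial factor $\binom{m}{\sR}$, and (ii) choosing, independently for each $i$, a classical parking function $\pi^{(i)} \in \PF(s_i)$. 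By additivity, the weight of $\pi$ equals $\prod_i x^{\unl(\pi^{(i)})} y^{\dis(\pi^{(i)})} z^{\idoc(\pi^{(i)})} w^{\rlm(\pi^{(i)})}$; summing over (ii) produces $\prod_i P_{s_i}(x,y,z,w)$, and summing over $\sR$ yields the claimed identity.

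The only nontrivial ingredient is the additivity of $\idoc$ and $\rlm$ in Step 2. This requires that these two statistics on $\PF(m,n)$ be interpreted segment-wise, as inherited from the forest-side statistics $\lev(F)-1$ and $\deg_T(0)$ aggregated across the component trees, rather than read off a single inverse outcome of length $m$; I would want to spell out this convention explicitly in the writeup, since otherwise boundary effects between adjacent segments could in principle interfere with the clean factorization. Once that convention is in place, the rest is a straightforward bookkeeping argument with the multinomial coefficient $\binom{m}{\sR}$.
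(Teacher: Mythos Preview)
Your argument is correct and is precisely the approach the paper intends: the paper sets up the segment decomposition via the untouched spots $k_i$ in the paragraph preceding the proposition, asserts the equality with the forest generating function via Theorem~\ref{un1-general} applied segment-wise, and then simply declares the proposition an ``easy generalization'' without further detail. Your explicit flagging of the convention that $\idoc$ and $\mathrm{rlm}$ on $\PF(m,n)$ must be read segment-wise (equivalently, via the additive forest statistics $\lev(F)-1$ and $\deg_T(0)$) is exactly the point the paper handles by defining $P_{m,n}$ simultaneously on both sides of the bijection; once that is fixed, the multinomial bookkeeping is all that remains.
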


\begin{proposition}
\begin{equation*}
P'_{m, n}(x, y, z, w)=\sum_{\sR \in C(m)} \binom{m}{\sR} \prod_{i=1}^{n-m+1} P'_{s_i}(x, y, z, w),
\end{equation*}
where $C(m)$ consists of compositions of $m$: $\sR=(s_1, \dots,
s_{n-m+1}) \models m$ with $\sum_{i=1}^{n-m+1} s_i=m$. 
\end{proposition}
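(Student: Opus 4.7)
The plan is to mirror the proof of the preceding (classical) proposition, with the only extra task being to verify that the relevant probabilities factor across the segment decomposition in the probabilistic setting.

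First I would fix a preference vector $\pi\in[n]^m$ and condition on a realization of the coin-flip sequence under which all $m$ cars park. For such a realization the $n-m$ never-attempted spots $0=k_0<k_1<\cdots<k_{n-m}<k_{n-m+1}=n+1$ are unambiguous. By definition no car, whether it happens to move forward with probability $p$ or backward with probability $1-p$ after a collision, ever drives over any $k_j$; hence the cars whose preferences lie in the open interval $(k_{i-1},k_i)$ complete their journey inside that block and consume a set of coin flips disjoint from those used by the other blocks. This is the single point at which the probabilistic proof departs from the classical one, and once it is in hand the executions in distinct blocks are mutually independent.

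Given this, I would translate the preferences in block $i$ by $-k_{i-1}$ to obtain a (probabilistic) parking preference $\alpha_i\in[s_i]^{s_i}$ with $s_i=k_i-k_{i-1}-1$, observe that the four statistics $\unl,\dis,\des,\rlm$ are additive across blocks exactly as in the classical argument, and note that the probability of the realization succeeding with these particular $k_j$ factors as $\prod_{i=1}^{n-m+1}\PR(\alpha_i\in\PF(s_i))$. Multiplying by the corresponding statistic weight and then summing over all block-preferences $\alpha_i$ and all successful coin-flip realizations inside each block produces $\prod_{i=1}^{n-m+1}P'_{s_i}(x,y,z,w)$.

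Finally, to enumerate the global labelled structures I would choose which $s_i$ of the $m$ cars belong to block $i$, which contributes the multinomial coefficient $\binom{m}{\sR}$, and then sum over compositions $\sR=(s_1,\dots,s_{n-m+1})\in C(m)$. The main, and really only, potential obstacle is the segment-independence claim from the first paragraph; once the definition of ``never attempted'' is invoked that step becomes essentially tautological, so I expect the argument to go through in the same form as the classical case without further complication.
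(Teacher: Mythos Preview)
Your proposal is correct and follows essentially the same approach as the paper. The paper states that this proposition is an ``easy generalization'' of Theorem~2.1$'$ ``based upon the above correspondence,'' meaning the segment decomposition into blocks separated by the never-attempted spots $k_1<\cdots<k_{n-m}$; your write-up simply spells out that decomposition and adds the one genuinely new ingredient, namely that the coin flips consumed by distinct blocks are disjoint so that the parking probability factors as $\prod_i \PR(\alpha_i\in\PF(s_i))$, which is exactly what the paper leaves implicit.
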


%For trees, the recurrence relation for degree of root vertex $0$ is
%\begin{equation*}
%P_n(x)=\sum_{i=0}^{n-1} \binom{n-1}{i} x P_{n-i-1}(x) (i+1)^i,
%\end{equation*}
%where $P_n(x)=\sum_{T \in \mathcal{T}(n+1)} x^{\text{degree of root vertex }0}$.

\section{Circular Symmetry: Part I}
\label{sec:sym1}
In this section we derive various results for parking functions $\pi
\in \PF(n)$, using a combination of probabilistic and combinatorial
tools. We will follow Pollak's ingenious circle argument \cite{Pollak}
for the street parking model and assign $n$ cars on a circle with
$n+1$ spots. Those car assignments where spot $n+1$ is left empty
after circular rotation give valid parking functions. A forward (or
backward) movement of a car on a street could be interpreted as a
clockwise (or counterclockwise) movement of a car on a circle. We will
write our results for the parking situation under the probabilistic
scenario, where a car moves forward with probability $p$ and backward
with probability $1-p$ when its desired spot is taken. But note that
when $p=1$ we recover the classical deterministic parking model and so
readers who are less familiar with probability may interpret all
results deterministically. 

Unlike Pollak's original argument where the parking statistics are
studied after all cars have parked, we will investigate the individual
parking statistics for each car the moment it is parked on the
circle. It turns out that this ``seemingly small step ahead'' will
provide a lot more useful information about a range of parking
statistics, such as ``unluckiness'' and ``displacement,'' that we
studied earlier. We note that $\unl(\pi)$ and
$\dis(\pi)$ are sums of individual parking statistics
$\unl(\pi_i)$ and $\dis(\pi_i)$, and more importantly
only the parking preferences of the first $i$ cars matter when
studying the corresponding statistics of the $i$th car that enters. We
also note that the average parking statistics are invariant under
circular rotation and the direction that the car chooses to move when
its preferred spot is occupied. This is the deeper reason behind the
disappearance of the probabilistic parameter $p$ in the generating
function $P'_n(x, y)$ in Corollary \ref{cancelation}.

Based on the above crucial observations, we first present some
probabilistic results for the parking statistics unluckiness and
displacement. 

\begin{proposition}\label{prob_displacement}
Consider a street with $n$ spots, where $m\leq n$ cars are parked under
the probabilistic parking protocol. Take $0\leq i\leq m-1$. The
probability that the displacement of the $(i+1)$th car is $k$, where
$0 \leq k \leq i$, is given by 
\begin{align*}
&P_k:=\PR\left(\text{displacement of } (i+1)\text{th car is } k \hspace{.1cm} \vert \hspace{.1cm} \pi \in \PF(m, n)\right) \notag \\
&=\frac{n-i}{(n+1)^i} \left[p\sum_{j=k+1}^{i+1} \binom{i}{j-1} j^{j-2} (n-j+1)^{i-j}\right. \notag \\
& \hspace{3cm} \left.+(1-p)\sum_{j=n-i}^{n-k} \binom{i}{n-j} (n-j+1)^{n-j-1} j^{i-n+j-1}\right] \notag \\
&=\frac{n-i}{(n+1)^i} \sum_{s=k}^i \binom{i}{s} (s+1)^{s-1} (n-s)^{i-s-1}.
\end{align*} 
The average displacement of parking the $(i+1)$th car is
$D_{i}=\sum_{k=0}^i k P_k$, and the average displacement of parking a
random car is $\frac{1}{m}\sum_{i=0}^{m-1} D_i$. 
\end{proposition}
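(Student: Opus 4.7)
The plan is to apply Pollak's circle argument from the opening of Section~\ref{sec:sym1} together with a block decomposition. I would place $m$ cars on a circle of $n+1$ spots with preferences uniform in $[n+1]^m$; the event $\pi\in\PF(m,n)$ corresponds to spot $n+1$ remaining empty, which by rotational symmetry has probability $(n+1-m)/(n+1)$ independent of the displacement of any individual car. Hence the sought conditional probability equals the unconditional probability on the circle, and since the displacement of the $(i+1)$st car depends only on the preferences of the first $i+1$ cars, I would restrict to those and, by a further rotation, fix the preference of the $(i+1)$st car at spot~$1$.

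For $k=0$ the event is ``spot~$1$ empty after the first $i$ cars park,'' which by rotational symmetry has probability $(n+1-i)/(n+1)$. For $k\ge 1$, I would split on the coin flip of the $(i+1)$st car and write $P_k=pF_k+(1-p)B_k$, where $F_k$ is the probability that spots $1,\dots,k$ are occupied and spot $k+1$ is empty after the first $i$ cars park, and $B_k$ is the analogous backward probability (spots $1,n+1,\dots,n+2-k$ occupied and $n+1-k$ empty). The reflection $x\mapsto n+2-x$ of the circle shows $F_k=B_k$, explaining the disappearance of $p$; the substitutions $j=s+1$ in the forward sum and $j=n-s$ in the backward sum then convert both sums in the first displayed formula into the single sum of the second.

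The crux is the evaluation of $F_k$. I would classify configurations of the first $i$ cars by the size $s$ (with $k\le s\le i$) of the maximal block of occupied spots ending at spot $k$, namely the cyclic segment $\{k-s+1,\dots,k\}$, flanked by empty spots at $k-s$ and $k+1$. These flanking empties \emph{decouple} the block from its complement: any car about to enter a flanking spot would park there, contradicting its emptiness, so cars whose preference lies in the block stay in the block and cars whose preference lies in the complementary arc stay in the arc. Consequently the weighted count of (preference, coin-flip) pairs yielding such a configuration factors as $\binom{i}{s}\cdot(\text{weighted count in the $s$-block})\cdot(\text{weighted count in the $(n-s-1)$-arc})$. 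By Corollary~\ref{cancelation}, together with its $(m,n)$-generalization implicit in the two propositions at the end of Section~2, these trapped weighted counts are $p$-independent and equal $|\PF(s,s)|=(s+1)^{s-1}$ and $|\PF(i-s,n-s-1)|=(n-i)(n-s)^{i-s-1}$, respectively. Summing over $s$ and dividing by the $(n+1)^i$ equally likely preference vectors for the first $i$ cars yields the asserted simplified form, and the $k=0$ case collapses to $(n+1-i)/(n+1)$ via an Abel-type identity. The main obstacle will be a careful justification of the block--complement decoupling under the \emph{probabilistic} protocol, and the identification of the trapped enumerations on each side with the classical parking function counts, both of which rely on the $p$-invariance supplied by Corollary~\ref{cancelation}.
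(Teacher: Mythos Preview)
Your proposal is correct and takes essentially the same approach as the paper: both reduce to the circle via Pollak's argument, restrict to the first $i+1$ cars, and decompose according to a fully occupied block flanked by empty spots, identifying the two pieces with classical parking-function counts (via the $p$-invariance of Corollary~\ref{cancelation}) to obtain the $p$-free formula. The only difference is the frame chosen for the rotation---the paper fixes the exit at spot $n+1$ and sums over the leftmost open spot $j$ where car $i+1$ parks, whereas you fix car $i+1$'s preference at spot $1$ and sum over the size $s$ of the surrounding occupied block; under the substitution $s=j-1$ these are literally the same decomposition, with your second flanking empty spot $k-s$ playing the role of the paper's exit.
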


\begin{proof}
As observed earlier, no matter how many cars are to be parked in
total, only the parking preferences of the first $i+1$ cars have an
effect on the displacement of the $(i+1)$th car that enters. The
minimum possible displacement of the $(i+1)$th car is $0$ and the
maximum possible displacement of the $(i+1)$th car is $i$, which
happens when the first $i$ entering cars occupy an entire block and
the desired spot of the $(i+1)$th car is at one end of the block. 

We assign $i$ cars on a circle with $n+1$ spots. This leaves open
% 6/6/23 changed "which" to "these"
$n-i+1$ spots. One of these would become the street exit in the street
parking model, and any of the other $n-i$ available spots may be
parked in by the $(i+1)$th car. Adapting circular symmetry to the
% 6/6/23 changed "denote" to "denoting"
street parking situation and denoting the available spot that is later
parked in by the $(i+1)$th car by $j$, we only need to consider two
extremal cases: (a) spot $j$ is the leftmost open spot after the first
$i$ cars have parked and the $(i+1)$th car travels forward to park,
and (b) spot $j$ is the rightmost open spot after the first $i$ cars
have parked and the $(i+1)$th car travels backward to park. For case
(a), there are $j-1$ cars and $j-1$ spots to the left of spot $j$ and
% 6/6/23 inserted "the fact that"
$i-j+1$ cars and $n-j$ spots to the right of spot $j$. The fact that
the forward-moving displacement of the $(i+1)$th car is $k$ implies
% 6/6/23 impost -> impose
that $j\geq k+1$. We must also impose that $j\leq i+1$ to ensure that
$i-j+1 \geq 0$. Similarly, for case (b), there are $n-j$ cars and
$n-j$ spots to the right of spot $j$ and $i-n+j$ cars and $j-1$ spots
to the left of spot $j$. The backward-moving displacement of the
$(i+1)$th car is $k$ implies that $j \leq n-k$. We must also impose
that $j\geq n-i$ to ensure that $i-n+j \geq 0$.

From a slight generalization of Corollary \ref{cancelation}, the
non-normalized total probability of valid parking functions under a
probabilistic street parking model with $m$ cars and $n$ spots is
independent of the forward probability $p$ and assumes the same
formula as in the classical setting: 
\begin{equation*}
\sum_{\pi} \PR(\pi \in \PF(m, n))=(n-m+1) (n+1)^{m-1}.
\end{equation*}
Therefore the probability of the first $i+1$ cars successfully parking
is $(n-i)(n+1)^i$. Now case (a) happens with probability
$$p\sum_{j=k+1}^{i+1} \binom{i}{j-1} j^{j-2} (n-i)(n-j+1)^{i-j},$$
where the binomial coefficient $\binom{i}{j-1}$ chooses $j-1$ cars out
of $i$ cars to assign to the left of spot $j$, and the remaining
$i-j+1$ cars are assigned to the right of spot $j$. Similarly, case
(b) happens with probability 
  $$ (1-p)\sum_{j=n-i}^{n-k} \binom{i}{n-j} (n-j+1)^{n-j-1} (n-i)
      j^{i-n+j-1}, $$ 
where the binomial coefficient $\binom{i}{n-j}$ chooses $n-j$ cars out
of $i$ cars to assign to the right of spot $j$ and the remaining
$i-n+j$ cars are assigned to the left of spot $j$.
% 6/6/23 recall -> recalling
Recalling that there are $n-i$ available spots after the first $i$
cars have parked, we have 
\begin{align*}
  &\PR\left(\text{displacement of } (i+1)\text{th car is } k \hspace{.1cm}
% 6/6/23 \} -> )
  \vert \hspace{.1cm} \pi \in \PF(m, n)\right) \notag \\ 
&=\frac{(n-i)^2}{(n-i)(n+1)^i} \left[p\sum_{j=k+1}^{i+1} \binom{i}{j-1} j^{j-2} (n-j+1)^{i-j}\right. \notag \\
& \hspace{3cm} \left.+(1-p)\sum_{j=n-i}^{n-k} \binom{i}{n-j} (n-j+1)^{n-j-1} j^{i-n+j-1}\right] \notag \\
&=\frac{n-i}{(n+1)^i} \left[p\sum_{s=k}^{i} \binom{i}{s} (s+1)^{s-1} (n-s)^{i-s-1}\right. \notag \\
& \hspace{3cm} \left.+(1-p)\sum_{s=k}^{i} \binom{i}{s} (s+1)^{s-1} (n-s)^{i-s-1}\right] \notag \\
&=\frac{n-i}{(n+1)^i} \sum_{s=k}^i \binom{i}{s} (s+1)^{s-1} (n-s)^{i-s-1}.
\end{align*} 
The rest is immediate.
\end{proof}

From Proposition \ref{prob_displacement}, we see that the displacement
statistic is rather correlated: the displacement of a car depends on
the parking preferences of all the cars that enter before it. The next
corollary shows that there is more independence structure with the
unluckiness statistic. 

\begin{corollary}\label{cr:unlucky}
\begin{equation*}
\PR((i+1)\text{th car is unlucky} \hspace{.1cm} \vert \hspace{.1cm} \pi \in \PF(m, n))=\frac{i}{n+1}.
\end{equation*}
\end{corollary}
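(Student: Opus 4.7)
The plan is to derive the corollary as a direct consequence of Proposition \ref{prob_displacement}. First I would observe that the $(i+1)$th car is unlucky if and only if its displacement is strictly positive, so
\[
\PR((i+1)\text{th car is unlucky}\mid \pi\in\PF(m,n)) = 1 - P_0,
\]
reducing the task to computing $P_0$ and showing it equals $(n+1-i)/(n+1)$.

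Next, setting $k=0$ in Proposition \ref{prob_displacement} gives
\[
P_0 = \frac{n-i}{(n+1)^i}\sum_{s=0}^{i}\binom{i}{s}(s+1)^{s-1}(n-s)^{i-s-1},
\]
and I would recognize the inner sum as a Hurwitz-type Abel convolution. Specifically, using the identity
\[
\sum_{k=0}^{N}\binom{N}{k}(x+k)^{k-1}(y+N-k)^{N-k-1} = \frac{(x+y)(x+y+N)^{N-1}}{xy}
\]
with $N=i$, $x=1$, $y=n-i$ (so that $x+k = s+1$ and $y+N-k = n-s$), the sum evaluates to $(n+1-i)(n+1)^{i-1}/(n-i)$. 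The $(n-i)$ factor cancels cleanly with the prefactor, yielding $P_0 = (n+1-i)/(n+1)$, and hence $1-P_0 = i/(n+1)$.

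The hard part will be pinning down the correct Abel-type identity: the exponents $(s+1)^{s-1}$ and $(n-s)^{i-s-1}$ each carry the $-1$ shift characteristic of Hurwitz's form, rather than the more familiar Abel form $\sum\binom{n}{k}x(x+k)^{k-1}(y-k)^{n-k} = (x+y)^n$. Once the identification is made the algebra is routine, and verifying $i=1,2$ gives a quick sanity check on both the identity and the cancellation of $(n-i)$. A conceptual alternative in the spirit of Section \ref{sec:sym1} would argue via Pollak's circle: on the $(n+1)$-spot circle the first $i$ cars occupy $i$ distinct positions, the $(i+1)$th car's uniform preference collides with one of them with probability $i/(n+1)$, and rotational invariance of the exit position shows this probability is preserved upon conditioning on $\pi\in\PF(m,n)$. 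However, the algebraic route above is more direct and ties the corollary tightly to the preceding proposition.
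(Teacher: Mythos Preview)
Your proof is correct and follows essentially the same approach as the paper: both reduce to computing $P_0$ from Proposition~\ref{prob_displacement} and then invoke an Abel-type identity to evaluate the sum. The paper just says ``Abel's extension of the binomial theorem'' without further detail, whereas you make the Hurwitz form and substitution explicit; your Pollak-circle aside is also anticipated by the remark immediately following the corollary.
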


\begin{proof}
Note that a car is lucky if and only if its displacement is
zero. Using the notation of Proposition \ref{prob_displacement}, 
\begin{equation*}
\PR((i+1)\text{th car is lucky})=P_0\\=\frac{n-i}{(n+1)^i}
\sum_{s=0}^i \binom{i}{s} (s+1)^{s-1} (n-s)^{i-s-1}=1-\frac{i}{n+1}, 
\end{equation*}
where the last equality comes from Abel's extension of the binomial theorem.
\end{proof}

\begin{remark}
We will later give a direct proof of Corollary \ref{cr:unlucky} in the
more general case of $(r, k)$-parking functions. See Theorem
\ref{thm:general_k_r}. 
\end{remark}

The circular symmetry argument we have been employing is of wide
applicability and is not only restricted to classical parking
functions. It produces many new results and renders various known
results as corollaries. We now present some of them. 

Fix some positive integers $k$ and $r$. Let the length-$m$ vector
% 6/6/23 singularized subject
$\boldsymbol{u}$ be such that $u_i=k+(i-1)r$ for $1\leq i\leq m$. A
$\boldsymbol{u}$-parking 
function of this form is referred to in \cite{SW} as an $(r,
k)$-parking function of length $m$. There is a similar interpretation
for such $\boldsymbol{u}$-parking functions in terms of the classical parking
scenario: one wishes to park $m$ cars on a street with $u_m$ spots,
but only $m$ spots are still empty, which are at positions no later
than $u_1, \dots, u_m$. With the probabilistic scenario ($p$ not
necessarily equals $1$), a preference vector $\pi$ becomes a valid
% 6/6/23 reworded
parking function when the $i$th spot parked in by the $m$ cars read
from left to right is at most $u_i$ for $1\leq i\leq
m$. Denote the probability of this event by $\PR(\pi \in \PF_m(r,
k))$. When $p=1$, this event has probability either $0$ or $1$, and we
denote the deterministic set of valid parking functions also by
$\PF_m(r, k)$.

% 6/6/23 , -> ;
For example, take $k=r=m=2$ so $\boldsymbol{u}=(2, 4)$; then parking
preference $(3, 3) \in \PF_2(2, 2)$ with probability $1-p$. Here car
$1$ takes the empty spot $3$ upon entering the street. Now car $2$
enters and finds its desired spot $3$ taken by car $1$, prompting it
to make a choice. If car $2$ drives forward to park at spot $4$, which
happens with probability $p$, then the parking spots occupied by the
two cars are $(3, 4) \not\leq (2, 4)$. But if car $2$ drives backward
to park at spot $2$, which happens with probability $1-p$, then the
parking spots occupied by the two cars are $(2, 3) \leq (2, 4)$.

\begin{theorem}\label{thm:general_k_r}
\begin{equation}\label{general_k_r}
\sum_{\pi} x^{\unl(\pi)} y^{\rep(\pi)} \hspace{.1cm} \PR(\pi \in
\PF_m(r, k))=k \prod_{i=1}^{m-1} (xy+(i-1)x+k+mr-i).  
\end{equation}
\end{theorem}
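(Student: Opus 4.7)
The plan is to extend the probabilistic Pollak circle argument to $(r,k)$-parking functions, in the same spirit as Proposition~\ref{prob_displacement} and Corollary~\ref{cr:unlucky}. Place $m$ cars with preferences drawn independently and uniformly from $[k+mr]$ on a circle with $k+mr$ spots, and let them park probabilistically. The statistics $\unl$ and $\rep$ are preserved by simultaneous cyclic shifts $\tilde\pi \mapsto \tilde\pi^{(c)}$ of the preference vector, and by the probabilistic $(r,k)$-Pollak identity (an extension of Corollary~\ref{cancelation}), one has $\sum_{c=0}^{k+mr-1}\PR(\tilde\pi^{(c)}\in\PF_m(r,k))=k$ for every $\tilde\pi$, independently of $p$. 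Changing variables $\pi=\tilde\pi^{(c)}$ in the defining sum then yields
\begin{equation*}
G_m(x,y):=\sum_\pi x^{\unl(\pi)}y^{\rep(\pi)}\,\PR(\pi\in\PF_m(r,k))=k\,(k+mr)^{m-1}\,\mathbb{E}_{\mathrm{circ}}\!\left[x^{\unl}y^{\rep}\right],
\end{equation*}
where the expectation is with respect to i.i.d.\ uniform preferences on the circle.

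For the per-car analysis on the circle, observe that when the $(i+1)$th car enters (for $1\leq i\leq m-1$), exactly $i$ spots are occupied, and the spot $\tilde\pi_i$ is always among them: car $i$ either parks at $\tilde\pi_i$, or it finds that spot already taken, so either way $\tilde\pi_i$ is occupied immediately afterwards. Since $\tilde\pi_{i+1}$ is uniform on $[k+mr]$ and independent of the past, there are three cases: with probability $(k+mr-i)/(k+mr)$ the car is lucky and not a repeat (weight $1$); with probability $1/(k+mr)$ its preference equals $\tilde\pi_i$, making it both unlucky and a repeat (weight $xy$); and with probability $(i-1)/(k+mr)$ it hits one of the remaining $i-1$ occupied spots, making it unlucky but not a repeat (weight $x$). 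Independence of the preference coordinates across cars allows the joint generating function to factor:
\begin{equation*}
\mathbb{E}_{\mathrm{circ}}\!\left[x^{\unl}y^{\rep}\right]=\prod_{i=1}^{m-1}\frac{xy+(i-1)x+(k+mr-i)}{k+mr}.
\end{equation*}
Multiplying through by the prefactor $k(k+mr)^{m-1}$ collapses the denominators and produces the claimed product.

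The main obstacle is establishing the probabilistic $(r,k)$-Pollak identity $\sum_c\PR(\tilde\pi^{(c)}\in\PF_m(r,k))=k$ for every $\tilde\pi$ and every $p\in[0,1]$, which subsumes both the total count $\sum_\pi\PR(\pi\in\PF_m(r,k))=k(k+mr)^{m-1}$ and the $p$-independence needed for the circle-to-street transfer. The deterministic case $p=1$ is classical, and small-case checks confirm the identity for $p\in(0,1)$; the general proof should proceed by a cancellation mechanism paralleling Corollary~\ref{cancelation}, pairing up the forward- and backward-moving contributions of cyclic shifts so that the $p$- and $(1-p)$-terms recombine. Once this identity is in place, the per-car factorization goes through cleanly, leveraging the key observation that $\tilde\pi_i$ is always occupied after car $i$ parks, which forces the ``repeat with previous car'' event to be a subset of the ``unlucky'' event and yields the clean three-way split above.
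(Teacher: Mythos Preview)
Your approach is essentially the same as the paper's: both use the Pollak circle argument for $(r,k)$-parking functions, observe that $\unl$ and $\rep$ are rotation-invariant, and compute the identical three-way split $\{xy,(i-1)x,k+mr-i\}$ for the $(i+1)$th car to factor the generating function. The only difference is that the paper sidesteps your ``main obstacle'' by working directly at $p=1$, where the needed Pollak identity (exactly $k$ of the $k+mr$ rotations are valid $(r,k)$-parking functions) is classical \cite{Stanley1}, and then simply asserting that the argument adapts to generic $p$ with minor modification; your explicit note that $\tilde\pi_i$ is always occupied after car $i$ parks is implicit in the paper's claim that repeating the previous preference forces unluckiness.
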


\begin{proof}
We think deterministically and take the forward probability $p=1$. But
as will be clear in the proof, with minor adaptation our argument
works for a generic $p$. We utilize a generalization of Pollak's
original circle argument \cite{Pollak} for $(r, k)$-parking functions
introduced in \cite{Stanley1}. To generate an $(r, k)$-parking
function of length $m$ uniformly at random, we proceed as follows: 
\begin{enumerate}
\item Pick an element $\pi \in (\mathbb{Z}/(k+mr)\mathbb{Z})^m$, where
  the equivalence class representatives are taken in $1, \dots,
  k+mr$. 

\item For $i \in \{0, \dots, k+mr-1\}$, record $i$ if $\pi+i(1, \dots,
  1)$ (modulo $k+mr$) is an $(r, k)$-parking function, where $(1,
  \dots, 1)$ is a vector of length $m$. There should be exactly $k$
  such $i$'s. 

\item Pick one $i$ from (2) uniformly at random. Then $\pi+i(1, \dots,
  1)$ is an $(r, k)$-parking function of length $m$ taken uniformly at
  random. 
\end{enumerate}

The main takeaway from this procedure is that a random $(r,
k)$-parking function could be generated by assigning $m$ cars
independently on a circle of length $k+mr$ and then applying circular
rotation. Since circular rotation does not affect unluckiness nor
repeats, and whether the $(i+1)$th car is unlucky or repeats the
behavior of the $i$th car is independent of the particular preferences
of all the previous $i$ cars that have already parked, the generating
function factors completely. 

The first car is always lucky. For $1\leq i\leq m-1$, note that if the
$(i+1)$th car prefers any of the $i$ spots that are taken by the
previous $i$ cars, then it must be unlucky. In particular, if the
$(i+1)$th car repeats the preference of the $i$th car, then it must be
unlucky. We have 
$$\PR((i+1)\text{th car repeats the preference of } i\text{th car})=\frac{1}{k+mr},$$

$$\PR((i+1)\text{th car is unlucky but does not repeat the preference
  of } i\text{th car})=\frac{i-1}{k+mr},$$ 

$$\PR((i+1)\text{th car is lucky})=\frac{k+mr-i}{k+mr}.$$

Recall that $$\sum_{\pi} \PR(\pi \in \PF_m(r, k))=k(k+mr)^{m-1}.$$ This implies that
\begin{align*}
&\sum_{\pi} x^{\unl(\pi)} y^{\rep(\pi)} \hspace{.1cm} \PR(\pi \in
    \PF_m(r, k))\notag \\ 
=&k(k+mr)^{m-1} \prod_{i=1}^{m-1}
\left(xy\frac{1}{k+mr}+x\frac{i-1}{k+mr}+\frac{k+mr-i}{k+mr}\right)
\notag \\ 
=&k \prod_{i=1}^{m-1} (xy+(i-1)x+k+mr-i).
\end{align*}
\end{proof}

\begin{corollary}
Taking $k=n-m+1$ and $r=y=1$ in (\ref{general_k_r}), we have
\begin{equation*}
\sum_{\pi} x^{\unl(\pi)} \hspace{.1cm} \PR(\pi \in \PF(m, n)) =(n-m+1)
\prod_{i=1}^{m-1} (ix+(n-i+1)). 
\end{equation*}
This agrees with the corresponding (deterministic) formula in Gessel
and Seo \cite[Theorem 10.1, Corollary 10.2]{GS}. 
\end{corollary}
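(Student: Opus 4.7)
The plan is entirely routine: the corollary is a direct specialization of Theorem \ref{thm:general_k_r}, so I only need to verify that the stated parameter substitution lands in the correct setting and then simplify the right-hand side.

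First I would confirm that $(r,k)$-parking functions with $k=n-m+1$ and $r=1$ are exactly the usual parking functions in $\PF(m,n)$. With these values the vector $\boldsymbol{u}$ from the definition satisfies $u_i = k + (i-1)r = n-m+i$, so the defining condition $b_i \leq u_i$ on the sorted landing positions is $b_i \leq n-m+i$, which is the standard criterion for $m$ cars to successfully park on a street of $n$ spots under both the deterministic and probabilistic protocols. Hence $\PF_m(1, n-m+1) = \PF(m,n)$ and the probabilities $\PR(\pi \in \PF_m(r,k))$ agree with $\PR(\pi \in \PF(m,n))$.

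Next I would substitute $k = n-m+1$, $r = 1$, and $y = 1$ directly into the identity
\begin{equation*}
\sum_{\pi} x^{\unl(\pi)} y^{\rep(\pi)} \, \PR(\pi \in \PF_m(r, k)) = k \prod_{i=1}^{m-1} (xy + (i-1)x + k + mr - i).
\end{equation*}
The leading constant becomes $k = n-m+1$. Inside each factor, $k + mr = n-m+1+m = n+1$, so the factor simplifies as
\begin{equation*}
xy + (i-1)x + k + mr - i \;=\; x + (i-1)x + n+1-i \;=\; ix + (n-i+1).
\end{equation*}
Putting these together yields
\begin{equation*}
\sum_{\pi} x^{\unl(\pi)} \, \PR(\pi \in \PF(m,n)) = (n-m+1) \prod_{i=1}^{m-1}\bigl(ix + (n-i+1)\bigr),
\end{equation*}
which is the claimed formula.

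There is no real obstacle here; the only thing to be careful about is matching the $\boldsymbol{u}$ parametrization of $(r,k)$-parking functions with the $(m,n)$ convention, and noting that setting $y=1$ erases the $\rep$ statistic without otherwise altering the identity. Finally, to justify the last sentence of the corollary, I would remark that when we additionally specialize to the classical setting $p=1$, the probabilities $\PR(\pi \in \PF(m,n))$ are indicator functions of $\PF(m,n)$, so the left-hand side reduces to the deterministic generating function $\sum_{\pi \in \PF(m,n)} x^{\unl(\pi)}$, thereby recovering the formula of Gessel and Seo \cite[Theorem 10.1, Corollary 10.2]{GS}.
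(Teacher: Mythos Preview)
Your proposal is correct and follows exactly the approach indicated by the paper: the corollary is stated as an immediate specialization of Theorem~\ref{thm:general_k_r}, and you carry out precisely that substitution, verifying along the way that the $(r,k)$-parameters $k=n-m+1$, $r=1$ recover $\PF(m,n)$ and simplifying each factor via $k+mr=n+1$. One tiny wording quibble: in the deterministic setting the defining inequality $b_i\le u_i$ is on the sorted \emph{preferences} rather than ``landing positions'' (the paper uses landing positions only for the probabilistic variant), but this does not affect the argument.
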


\begin{corollary}
Taking $k=r=x=1$ in (\ref{general_k_r}), we have
\begin{equation*}
\sum_{\pi} y^{\rep(\pi)} \hspace{.1cm} \PR(\pi \in \PF(m))=(y+m)^{m-1}.
\end{equation*}
This agrees with the corresponding (deterministic) formula in Yan
\cite[Corollary 1.3]{Yan}. 
\end{corollary}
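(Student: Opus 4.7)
The plan is to verify this corollary by direct substitution into equation~(\ref{general_k_r}), since it is advertised as a specialization of Theorem~\ref{thm:general_k_r}. The only real content is (i) to confirm that the $(r,k)$-parking function setup with $r=k=1$ recovers the classical parking functions $\PF(m)$, and (ii) to check that the product on the right collapses to $(y+m)^{m-1}$.

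First I would observe that when $r=k=1$, the length-$m$ vector $\boldsymbol{u}$ defining $(r,k)$-parking functions has $u_i = k + (i-1)r = i$, so the condition for $\pi$ to lie in $\PF_m(1,1)$ is exactly the classical condition $b_i \leq i$ on the increasing rearrangement. Hence $\PF_m(1,1)=\PF(m)$, and the left-hand side of~(\ref{general_k_r}) specialized at $x=1$ reduces to $\sum_\pi y^{\rep(\pi)}\PR(\pi\in\PF(m))$, matching the left-hand side of the corollary.

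Next I would substitute $k=r=x=1$ into the right-hand side. The leading factor of $k$ becomes $1$, and each factor in the product simplifies as
\begin{equation*}
xy+(i-1)x+k+mr-i \;=\; y + (i-1) + 1 + m - i \;=\; y+m.
\end{equation*}
Since this value is independent of $i$ and the product ranges over $i=1,\dots,m-1$, the right-hand side collapses to $(y+m)^{m-1}$, which is exactly the claimed identity.

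There is no real obstacle: the corollary is a clean specialization of Theorem~\ref{thm:general_k_r}, and the work has already been done in proving the general formula via the circular symmetry argument. The only subtlety to flag is the identification $\PF_m(1,1)=\PF(m)$; once that is in place, the calculation is a one-line substitution, and the probabilistic parameter $p$ has already been shown to be irrelevant to these enumerators.
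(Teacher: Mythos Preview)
Your proposal is correct and matches the paper's treatment: the corollary is stated without proof, being an immediate specialization of Theorem~\ref{thm:general_k_r}, and your verification that $\PF_m(1,1)=\PF(m)$ together with the one-line simplification of each factor to $y+m$ is exactly the substitution the paper intends.
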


% 6/6/23
A similar approach as in the proof of Theorem \ref{thm:general_k_r}
also yields the following result.

\begin{theorem}\label{thm:standalone}
\begin{equation*}
\sum_{\pi} x^{\unl(\pi)} y^{\lel(\pi)} \hspace{.1cm}
\PR(\pi \in \PF_m(r, k))=k y \prod_{i=1}^{m-1} (xy+(i-1)x+k+mr-i).  
\end{equation*}
\end{theorem}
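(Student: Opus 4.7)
The plan is to adapt the circle argument from the proof of Theorem \ref{thm:general_k_r} almost verbatim, replacing the repeat statistic by $\lel$. I would generate a uniformly random $(r,k)$-parking function by the same three-step Pollak-type procedure: assign $m$ cars independently and uniformly to a circle with $k+mr$ spots to obtain a preference tuple $(\pi_1, \dots, \pi_m)$, then select one of the exactly $k$ cyclic shifts yielding a valid $(r,k)$-parking function uniformly at random. Since $\unl$ and $\lel$ are both invariant under circular rotation, it suffices to compute the generating function relative to the uniform measure on the circle and then multiply by the total probability mass $k(k+mr)^{m-1}$.

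The first car always parks at its preferred spot, hence is lucky, and is a leading element by the tautology $\pi_1 = \pi_1$. This contributes an isolated factor of $y$, which is the only structural difference from Theorem \ref{thm:general_k_r}. For each subsequent car $i+1$ with $1 \leq i \leq m-1$, I would condition on the history and on the set $S_i$ of already-occupied spots. Two facts are crucial: $|S_i| = i$ (the first $i$ cars occupy distinct spots on the circle) and $\pi_1 \in S_i$ (car $1$ parks at $\pi_1$). Since $\pi_{i+1}$ is independent and uniform on $k+mr$ spots, it lands on $\pi_1$ with probability $1/(k+mr)$, making the car both unlucky and leading and contributing $xy$; lands on $S_i \setminus \{\pi_1\}$ with probability $(i-1)/(k+mr)$, making the car unlucky but not leading and contributing $x$; and lands in the complement with probability $(k+mr-i)/(k+mr)$, making the car lucky and contributing $1$.

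The main obstacle — which is the same one implicitly handled in Theorem \ref{thm:general_k_r} — is justifying the factorization across $i$. The key point is that the three conditional probabilities above depend only on $i$ and not on the realization of the past; the role played here by the identity $\pi_1 \in S_i$ exactly mirrors the role played by $\pi_i \in S_i$ in the repeat case. By the tower property (equivalently, by induction on $m$), the joint expectation factors as $\prod_{i=1}^{m-1}(xy + (i-1)x + k+mr-i)/(k+mr)$. Combining the normalization $k(k+mr)^{m-1}$, the isolated $y$ from car $1$, and this product recovers the claimed formula $k y \prod_{i=1}^{m-1}(xy + (i-1)x + k+mr-i)$. As in Theorem \ref{thm:general_k_r}, the argument is valid for arbitrary $p \in [0,1]$ because circular symmetry renders the direction of motion immaterial.
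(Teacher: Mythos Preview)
Your proposal is correct and takes essentially the same approach as the paper: the paper's proof is a one-line remark that the argument of Theorem~\ref{thm:general_k_r} goes through verbatim once one observes that for $1\le i\le m-1$ the $(i+1)$th car is automatically unlucky whenever $\pi_{i+1}=\pi_1$, and you have spelled out exactly this adaptation in full detail. The only cosmetic difference is that you make explicit the factorization justification via $\pi_1\in S_i$ and the tower property, which the paper leaves implicit.
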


\begin{proof}
The proof is almost identical to the proof of Theorem
\ref{thm:general_k_r}. We note that for $1\leq i\leq m-1$, if the
desired spot of the $(i+1)$th car coincides with that of the first
car, then it must be unlucky. 
\end{proof}

\begin{proposition}\label{followup}
\begin{equation*}
\sum_{\pi} x^{\unl(\pi)} y^{\#\{i\colon \pi_i=\pi_2\}} \hspace{.1cm}
\PR(\pi \in \PF_m(r, k))=k y \prod_{i=1}^{m-1} (xy+(i-1)x+k+mr-i).  
\end{equation*}
\end{proposition}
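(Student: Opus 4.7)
The plan is to adapt the circular symmetry argument used in Theorem \ref{thm:standalone}, paying careful attention to the fact that car $2$ now plays the role of the ``reference'' car while the first car is still the first to park. As in the proof of Theorem \ref{thm:general_k_r}, Pollak's generalized circle argument reduces the problem to evaluating
$$\ER\left[x^{\unl(\pi)} y^{\#\{i:\pi_i = \pi_2\}}\right]$$
under uniform independent preferences $\pi_1, \ldots, \pi_m$ on a circle of length $N := k+mr$, and then multiplying the result by $k N^{m-1}$, the non-normalized total probability $\sum_\pi \PR(\pi\in \PF_m(r,k))$. I will assume $m \geq 2$ throughout, since the statistic $\#\{i:\pi_i = \pi_2\}$ requires $\pi_2$ to exist.

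The key observation that makes the per-car factorization work for indices $i \geq 3$ is that when car $i$ enters, the spot $\pi_2$ is \emph{always} occupied: if $\pi_1 = \pi_2$ then car $1$ sits there; otherwise car $2$ sits there. Consequently, for each $i\in\{3,\ldots,m\}$, the analysis in the proof of Theorem \ref{thm:standalone} transfers directly, yielding a factor of $(xy + (i-2)x + N-i+1)/N$. The three summands correspond to the disjoint events $\pi_i = \pi_2$ (unlucky, contributing $xy$), $\pi_i$ equals one of the remaining $i-2$ occupied spots (unlucky but not matching, contributing $x$), and $\pi_i$ is empty (lucky, contributing $1$).

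The main obstacle is that the per-car factorization \emph{fails} for cars $1$ and $2$: the event ``car $2$ is unlucky'' coincides with the event ``$\pi_1 = \pi_2$'', which is also the event that makes car $1$ contribute an extra $y$ to $\#\{i:\pi_i=\pi_2\}$. I will handle cars $1$ and $2$ jointly by splitting on whether $\pi_1 = \pi_2$: when $\pi_1 = \pi_2$ (probability $1/N$) both cars contribute to the $y$-statistic and car $2$ is unlucky, giving $xy^2$; otherwise (probability $(N-1)/N$) only car $2$ contributes and both are lucky, giving $y$. The joint contribution is therefore $\frac{y(xy + N - 1)}{N}$.

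Finally, multiplying the cars $1$--$2$ factor with those for $i = 3,\ldots,m$ and reindexing by $j = i - 1$, the factor $xy + N - 1$ aligns precisely with the $j=1$ term of the target product $\prod_{j=1}^{m-1}(xy + (j-1)x + N - j)$, since $xy + 0 \cdot x + N - 1 = xy + N - 1$. Multiplying by the prefactor $k N^{m-1}$ cancels the $N^{m-1}$ in the denominator and yields the claimed formula.
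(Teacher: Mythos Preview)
Your proof is correct. It fleshes out the paper's first hinted argument---that among the first two cars, an instance of unluckiness occurs exactly when $\pi_1=\pi_2$, which is also precisely the event in which car $1$ contributes an extra factor of $y$---and then pushes through the per-car factorization for $i\geq 3$ using the observation that spot $\pi_2$ is always occupied by the time car $3$ enters. The paper's alternative proof is a one-line symmetry reduction: the involution swapping $\pi_1$ and $\pi_2$ preserves both $\unl(\pi)$ and $\PR(\pi\in\PF_m(r,k))$ while sending $\#\{i:\pi_i=\pi_2\}$ to $\lel$ of the swapped sequence, so the result follows immediately from Theorem~\ref{thm:standalone}. Your direct computation is more self-contained and makes the mechanism explicit; the swap argument is shorter and explains \emph{why} the generating function must agree with that of Theorem~\ref{thm:standalone} without any recomputation.
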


\begin{proof}
The above result will not come as a surprise once we note that an
instance of unluckiness occurs in the first two cars exactly when
$\pi_1=\pi_2$. Alternatively, we may switch the preferences of car $1$
and car $2$, which has no effect on the unluckiness index. 
\end{proof}

\begin{remark}
For $s \geq 3$, it is in general not true that
\begin{equation*}
\sum_{\pi} x^{\unl(\pi)} y^{\#\{i\colon \pi_i=\pi_s\}} \hspace{.1cm} \PR(\pi \in \PF_m(r, k))=k y \prod_{i=1}^{m-1} (xy+(i-1)x+k+mr-i). 
\end{equation*}
The first counterexample is located when $m=3$.
\end{remark}

\vskip.1truein

\begin{remark}\label{rk:symmetry}
It is however true that for $s \geq 1$,
\begin{equation*}
\sum_{\pi} y^{\#\{i\colon \pi_i=\pi_s\}} \hspace{.1cm} \PR(\pi \in
\PF_m(r, k))=k y \prod_{i=1}^{m-1} (y+(k+mr-1)).  
\end{equation*}
\end{remark}

Using standard probability tools, some asymptotic analysis of the
above parking statistics readily follows. 

\begin{proposition}\label{Poisson-CLT}
Take $m$ large. Take $r \geq 1$ any integer and $k=cm+r$ for some $c
\geq 0$. %Set $u_i=k+(i-1)r$ for $1\leq i\leq m$. 
Consider the parking preference $\pi$ chosen uniformly at random. Let
$U(\pi)$ be the number of unlucky cars and $R(\pi)$ be the number of
repeats in $\pi$ reading from left to right. Then for fixed $j=0, 1,
\dots$, 
\begin{equation*}
\PR\left(R(\pi)=j \hspace{.1cm} \vert \hspace{.1cm} \pi \in \PF_m(r,
k) \right) \sim \frac{\left(\frac{1}{c+r}\right)^j
  e^{-\frac{1}{c+r}}}{j!}, 
\end{equation*}
and for fixed $x$, $-\infty<x<\infty$,
\begin{equation*}
\PR\left(\frac{U(\pi)-\frac{1}{2(c+r)}m}{\sqrt{\frac{3(c+r)-2}{6(c+r)^2}m}}\leq x \hspace{.1cm} \vert \hspace{.1cm} \pi \in \PF_m(r, k)\right) \sim \Phi(x),
\end{equation*}
where $\Phi(x)$ is the standard normal distribution function.
\end{proposition}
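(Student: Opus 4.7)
The plan is to exploit the independence structure already extracted in the proof of Theorem \ref{thm:general_k_r}. Recall that Pollak's circular symmetry argument showed that, conditional on $\pi \in \PF_m(r,k)$, the events ``the $(i+1)$-th car is a repeat of the $i$-th car's preference'' and ``the $(i+1)$-th car is unlucky'' are mutually independent across $i \in \{1,\dots,m-1\}$, with probabilities $\frac{1}{k+mr}$ and $\frac{i}{k+mr}$ respectively (the first car is always lucky and is never counted as a repeat). This was precisely the observation that made the generating function in Theorem \ref{thm:general_k_r} factor into a product.

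Accordingly, under the conditional law $\PR(\,\cdot\mid\pi\in\PF_m(r,k))$ I would write $R(\pi) \stackrel{d}{=} \sum_{i=1}^{m-1} X_i$ and $U(\pi) \stackrel{d}{=} \sum_{i=1}^{m-1} Y_i$, where the $X_i$ are i.i.d.\ $\mathrm{Bernoulli}(p_m)$ with $p_m = 1/(m(c+r)+r)$ and the $Y_i$ are independent with $\PR(Y_i=1) = i/(m(c+r)+r)$. For the Poisson limit, note that $p_m \to 0$ and $(m-1)p_m \to 1/(c+r)$, so the classical law of rare events (Le Cam's theorem, or the Poisson limit theorem for i.i.d.\ Bernoullis) yields $R(\pi)\xrightarrow{d}\mathrm{Poisson}\bigl(1/(c+r)\bigr)$, which is exactly the claimed pmf.

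For the central limit theorem I would compute directly
\begin{align*}
\mathbb{E}[U(\pi)] &= \frac{1}{m(c+r)+r}\cdot\frac{m(m-1)}{2} \;=\; \frac{m}{2(c+r)} + O(1),\\
\mathrm{Var}(U(\pi)) &= \sum_{i=1}^{m-1} \frac{i}{m(c+r)+r}\Bigl(1-\frac{i}{m(c+r)+r}\Bigr) \;=\; \frac{3(c+r)-2}{6(c+r)^2}\,m + O(1),
\end{align*}
using $\sum_{i=1}^{m-1} i = m(m-1)/2$ and $\sum_{i=1}^{m-1} i^2 \sim m^3/3$. Since the summands $Y_i$ are uniformly bounded by $1$ and $\mathrm{Var}(U(\pi))$ grows linearly in $m$, the Lindeberg condition is trivially satisfied, and the Lindeberg--Feller CLT for triangular arrays delivers convergence to $\Phi$ after the stated centering and scaling. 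No substantial obstacle arises: once the per-car independence furnished by Theorem \ref{thm:general_k_r} is in hand, both asymptotics are off-the-shelf applications of standard limit theorems for sums of independent Bernoullis; the only mild care needed is the second-moment bookkeeping above.
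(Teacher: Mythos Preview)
Your proposal is correct and follows essentially the same approach as the paper: both use the per-car independence established in the proof of Theorem~\ref{thm:general_k_r} to write $R$ and $U$ as sums of independent Bernoullis with the stated parameters, compute the asymptotic mean and variance, and invoke the Poisson and normal approximations. You are slightly more explicit than the paper in naming the relevant limit theorems (Le Cam, Lindeberg--Feller), but the argument is the same.
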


%$c=\frac{1-c'}{c'}$ for the $c'$ in Theorem 2.10 of Kenyon and Yin.

\begin{proof}
From the proof of Theorem \ref{thm:general_k_r}, the probability
generating function of $\boldsymbol{X}:=(U(\pi), R(\pi))$ may be
decomposed into $\boldsymbol{X}=\sum_{i=1}^{m-1}\boldsymbol{X}_i$, with
$\boldsymbol{X}_i=(U_i(\pi), R_i(\pi))$ independent, $U_i$ and $R_i$
both Bernoulli, and 
\begin{equation*}
\ER(U_i)=\frac{i}{k+mr}, \hspace{1cm} \ER(R_i)=\frac{1}{k+mr},
\end{equation*}
\begin{equation*}
\Var(U_i)=\frac{i}{k+mr}\left(1-\frac{i}{k+mr}\right), \hspace{1cm} \Var(R_i)=\frac{1}{k+mr}(1-\frac{1}{k+mr}).
\end{equation*}
%\begin{equation*}
%\Cov(U_i, R_i)=\frac{1}{k+mr}-\frac{i}{(k+mr)^2}.
%\end{equation*}
We compute
\begin{equation*}
\sum_{i=1}^{m-1} \ER(U_i) \sim \frac{1}{2(c+r)}m, \hspace{1cm} \sum_{i=1}^{m-1} \ER(R_i) \sim \frac{1}{c+r},
\end{equation*}
\begin{equation*}
\sum_{i=1}^{m-1} \Var(U_i) \sim \frac{3(c+r)-2}{6(c+r)^2}m, \hspace{1cm} \sum_{i=1}^{m-1} \Var(R_i) \sim \frac{1}{c+r}. %\hspace{1cm} \sum_{i=1}^{m-1} \Cov(U_i, R_i) \sim \frac{1}{2(c+r)}.
\end{equation*}
We recognize that $R(\pi)$ may be approximated by
Poisson$(\frac{1}{c+r})$ while $U(\pi)$ may be approximated by
normal$(0, 1)$ after standardization. 
\end{proof}

\begin{corollary}
Taking $c=0$ and $r=1$ (and thus $k=1$) in Proposition \ref{Poisson-CLT}, we have
\begin{equation*}
\PR\left(R(\pi)=j \hspace{.1cm} \vert \hspace{.1cm} \pi \in \PF(m) \right) \sim \frac{1}{ej!},
\end{equation*}
\begin{equation*}
\PR\left(\frac{U(\pi)-\frac{m}{2}}{\sqrt{\frac{m}{6}}}\leq x \hspace{.1cm} \vert \hspace{.1cm} \pi \in \PF(m)\right) \sim \Phi(x).
\end{equation*}
This agrees with the corresponding formula in Diaconis and Hicks \cite[Theorem 4, Theorem 6]{DH}.
\end{corollary}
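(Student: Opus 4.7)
The plan is to derive this corollary as an immediate specialization of Proposition \ref{Poisson-CLT}. I would first note that the classical parking functions $\PF(m)$ are exactly the $(r,k)$-parking functions $\PF_m(1,1)$, since with $k=r=1$ the constraint vector $u_i = k+(i-1)r = i$ reproduces the usual Konheim--Weiss condition $b_i \leq i$. So setting $c=0$, $r=1$ (which forces $k = cm+r = 1$) in Proposition \ref{Poisson-CLT} is legitimate and puts us in the setting of ordinary parking functions chosen uniformly at random.

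Next I would simply substitute these values into each of the two asymptotic formulas. For the repeat statistic, the Poisson parameter $\tfrac{1}{c+r}$ becomes $\tfrac{1}{0+1}=1$, so
\begin{equation*}
\PR\bigl(R(\pi)=j \mid \pi\in\PF(m)\bigr) \;\sim\; \frac{1^{j}\, e^{-1}}{j!} \;=\; \frac{1}{e\,j!},
\end{equation*}
which is the first claimed formula. For the unluckiness statistic, the centering $\tfrac{m}{2(c+r)}$ becomes $\tfrac{m}{2}$ and the scaling $\tfrac{3(c+r)-2}{6(c+r)^{2}}m$ becomes $\tfrac{3-2}{6}m = \tfrac{m}{6}$, so the standard deviation is $\sqrt{m/6}$ and the standardized variable $(U(\pi)-m/2)/\sqrt{m/6}$ converges to a standard normal, yielding the second claimed formula.

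There is no real obstacle here — the corollary is purely mechanical substitution into the more general Proposition \ref{Poisson-CLT}. The only thing I would want to double-check is the identification $\PF(m)=\PF_m(1,1)$ (so that ``chosen uniformly at random in $[m]^m$'' in the $(r,k)$ framework matches the uniform distribution underlying the Diaconis--Hicks result), and the arithmetic $3(c+r)-2 = 1$ when $c+r=1$. Both are routine, and the agreement with \cite[Theorem 4, Theorem 6]{DH} then follows by direct comparison with their stated Poisson parameter $1$ and their limiting normal $\mathcal{N}(m/2, m/6)$.
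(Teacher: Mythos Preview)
Your proposal is correct and matches the paper's treatment: the corollary is stated without proof in the paper, being an immediate specialization of Proposition~\ref{Poisson-CLT} obtained by the exact substitution $c=0$, $r=1$ (so $k=1$) that you describe. There is nothing to add.
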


\begin{proposition}\label{Poisson-CLT-2}
Take $m$ large. Take $1\leq s\leq m$ any integer. Take $r \geq 1$ any integer and $k=cm+r$ for some $c \geq 0$. %Set $u_i=k+(i-1)r$ for $1\leq i\leq m$.
Consider the parking preference $\pi$ chosen uniformly at random. Let
$L_s(\pi)$ be the number of cars with the same preference as car
$s$. Then for fixed $j=0, 1, \dots$, 
\begin{equation*}
\PR\left(L_s(\pi)=1+j \hspace{.1cm} \vert \hspace{.1cm} \pi \in \PF_m(r, k) \right) \sim \frac{\left(\frac{1}{c+r}\right)^j e^{-\frac{1}{c+r}}}{j!}.
\end{equation*}
\end{proposition}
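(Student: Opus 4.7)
The plan is to read off the exact distribution of $L_s(\pi)$ directly from Remark~\ref{rk:symmetry} and then apply the classical Poisson limit theorem. The crucial observation is that the product on the right-hand side of the identity in Remark~\ref{rk:symmetry} has no $i$-dependence, so it collapses to
$$\sum_\pi y^{L_s(\pi)} \, \PR(\pi \in \PF_m(r,k)) \;=\; k\,y\,(y + k + mr - 1)^{m-1}.$$
Combining this with the total mass $\sum_\pi \PR(\pi \in \PF_m(r,k)) = k(k+mr)^{m-1}$ recorded in the proof of Theorem~\ref{thm:general_k_r}, I would extract the conditional probability generating function
$$\ER\!\left[y^{L_s(\pi) - 1} \;\middle|\; \pi \in \PF_m(r,k)\right] \;=\; \left(1 + \frac{y-1}{k+mr}\right)^{m-1}.$$
This is precisely the PGF of a $\mathrm{Binomial}(m-1,\, 1/(k+mr))$ random variable, so conditional on $\pi \in \PF_m(r,k)$, the shifted count $L_s(\pi) - 1$ has this exact binomial distribution.

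Next, specializing to $k = cm + r$ gives $k + mr = m(c+r) + r$, and hence the success probability satisfies
$$(m-1) \cdot \frac{1}{k+mr} \;=\; \frac{m-1}{m(c+r)+r} \;\longrightarrow\; \frac{1}{c+r} \quad \text{as } m \to \infty.$$
The Poisson law of rare events then yields convergence in distribution of $\mathrm{Binomial}(m-1, 1/(k+mr))$ to $\mathrm{Poisson}(1/(c+r))$, which is exactly the claimed asymptotic
$$\PR\!\left(L_s(\pi) = 1 + j \;\middle|\; \pi \in \PF_m(r,k)\right) \;\sim\; \frac{(1/(c+r))^j \, e^{-1/(c+r)}}{j!}$$
for each fixed $j$.

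There is essentially no obstacle in this argument; the key realization is that the $i$-independence of the factors in Remark~\ref{rk:symmetry} makes the generating function a pure $(m-1)$-th power, thereby revealing a hidden binomial structure. One could alternatively mimic the argument used for $R(\pi)$ in the proof of Proposition~\ref{Poisson-CLT} by writing $L_s(\pi) - 1$ as a sum of $m-1$ independent Bernoulli indicators (one for each car other than car $s$ whose preference happens to coincide with $\pi_s$, via the circle-rotation viewpoint) and then computing $\sum \ER$ and $\sum \Var$; but the generating-function route above is cleaner and more transparent.
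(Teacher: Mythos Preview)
Your argument is correct and is essentially the same approach as the paper's, which appeals to Remark~\ref{rk:symmetry} and says the proof is ``almost identical'' to that of Proposition~\ref{Poisson-CLT}. You have simply made explicit what that means: the $i$-independent factors give an exact $\mathrm{Binomial}(m-1,1/(k+mr))$ law for $L_s(\pi)-1$, whence Poisson convergence; this is equivalent to the paper's implicit decomposition into $m-1$ i.i.d.\ Bernoulli indicators, as you yourself note at the end.
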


\begin{proof}
This follows from Theorem \ref{thm:standalone} (see also Remark
\ref{rk:symmetry}). In the same way as the proof of Theorem
\ref{thm:standalone} is almost identical to the proof of Theorem
\ref{thm:general_k_r}, the proof of Proposition \ref{Poisson-CLT-2} is
also almost identical to the proof of Proposition \ref{Poisson-CLT}.
\end{proof}

\section{Circular Symmetry: Part II}

We have been focusing on the cars' perspective of parking functions
till now. In this section we also take into consideration the spots'
perspective. In the classical parking situation where there are $n$
cars parking on a street with $n$ spots, the unluckiness statistic
could be interpreted in two ways. Either it is the total number of
cars that fail to park at their desired spot, or it is the total number
of spots occupied by a car for which that spot is not the car's first
preference. In particular, spot $1$ is always lucky. We note that this
alternative interpretation for ``unluckiness'' fails for more general
parking situations. 

There is also a difference between cars and spots. Cars are actively
moving to find available spots; when a car's first preference is
taken, the car moves forward or backward and an extra probabilistic
scenario may be added to the parking model indicating the direction the
car moves. Contrarily, spots are passively waiting for cars to park in
them and a probabilistic scenario associated with the movement direction
is irrelevant. All statistics involving spots are thus only valid for
the deterministic parking model, where the forward probability $p=1$
(or equivalently $p=0$). 

\old{\begin{proposition}\label{thm:unlucky_1_car}
\begin{equation}\label{unlucky_1_car}
\sum_{\pi \in \PF(m)} x^{\unl(\pi)} y^{1\text{-cars}}=y\prod_{i=1}^{m-1} (xy+(i-1)x+(m-i+1)).
\end{equation}
\end{proposition}

\textit{Proposition \ref{thm:unlucky_1_car} fails with the extra probabilistic scenario.}

\begin{proof}
\textit{Quite brief. I will expand on this explanation with formulas
  later.} We still apply circular rotation but take the spots'
perspective. Spot $1$ is always lucky after rotation. This is part of
the reason why the classical situation is so special. We exclude the
first car that prefers spot $1$ from consideration. We could think
that this car enters first, as pushing this car in front of all other
cars will have no effect on both the unluckiness and the $1$'s
indices. When a new car comes in and picks an available spot that is
not spot $1$ then the ``unluckiness'' index will decrease, and this can
be coupled with whether spot $1$ is selected again. Suppose $i$ spots
(including spot $1$) have been taken, where $1\leq i\leq m-1$, then
with the entrance of the new car, we have 
$$\PR(\text{new car marks spot 1 again})=\frac{1}{m+1},$$

$$\PR(\text{new car marks an unavailable spot that is not spot 1})=\frac{i-1}{m+1},$$

$$\PR(\text{new car marks an available spot})=\frac{m-i+1}{m+1}.$$
This implies that
\begin{align*}
&=(m+1)^{m-1} x^{m-1}y \prod_{i=1}^{m-1}
  \left(y\frac{1}{m+1}+\frac{i-1}{m+1}+x^{-1}\frac{m-i+1}{m+1}\right)
  \notag \\ 
&=y\prod_{i=1}^{m-1} (xy+(i-1)x+(m-i+1)).
\end{align*}
\end{proof}
}

%$(2, 2, 1)$ and $(3, 1, 1)$ both lead to unluckiness $1$. $2$ appears
%twice in the first and $1$ appears twice in the second. 

We note some curious features of the pair of statistics (leading
elements, 1's). While the leading elements statistic is invariant
under circular rotation, it does not satisfy permutation symmetry as
permuting the entries might change the first element. On the other
hand, though the 1's statistic is invariant under permuting all the
entries, it does not exhibit circular rotation invariance. Indeed,
only $1$ out of $n+1$ rotations of an assignment of $n$ cars on a
circle with $n+1$ spots gives a valid parking function. 

\old{\textcolor{red}{I will expand on this bijective construction more
    later. For now, I will just give examples. Hope they are clear!} 

The central idea is that we translate from cars' perspective to spots'
perspective. We record the cars that prefer each spot, and when
multiple cars prefer the same spot, we repeat the smallest car entry
that prefers this spot. Then there is an extra twist which ensures
that ``unluckiness'' is also preserved. The bidrectional procedure is
actually the same (which is an added benefit), and we explain our
construction in full detail. We only conduct this bijection when the
first element is not $1$; if the first element is $1$, we map the
parking function to itself. 

\vskip.1truein

Example: $41122 \leftrightarrow 22313$:

\vskip.1truein

From first element statistic to 1's statistic:

$$41122 \rightarrow 41235 \rightarrow 23415 \rightarrow 22313$$

Explanation: 1st $\rightarrow$ is parking outcome, 2nd $\rightarrow$
is inverse, 3rd $\rightarrow$: compare $41122$ against $41235$, we
have two repeating segments $11$ and $22$ in $41122$, which correspond
to $12$ and $35$ in $41235$ respectively. We then inspect $23415$ and
interpret this as entry $2$ have the same value as entry $1$ (which is
$2$), and entry $5$ have the same value as entry $3$ (which is
$4$). This changes $23415$ to $22414$. For $11$ and $12$, the starting
digits are the same, whereas for $22$ and $35$, the starting digit is
one less. We address this discrepancy and change $22414$ to $22313$,
where we subtract $1$ from the $3$rd (and therefore also the $5$th)
entry $4$. 

From 1's statistic to first element statistic:

$$22313 \rightarrow 23415 \rightarrow 41235 \rightarrow 41122$$

Explanation: 1st $\rightarrow$ is parking outcome, 2nd $\rightarrow$
is inverse, 3rd $\rightarrow$: compare $22313$ against $23415$, we
have two repeating segments $22$ and $33$ in $22313$, which correspond
to $23$ and $45$ in $23415$ respectively. We then inspect $41235$ and
interpret this as entry $3$ have the same value as entry $2$ (which is
$1$), and entry $5$ have the same value as entry $4$ (which is
$3$). This changes $41235$ to $41133$. For $22$ and $23$, the starting
digits are the same, whereas for $33$ and $45$, the starting digit is
one less. We address this discrepancy and change $41133$ to $41122$,
where we subtract $1$ from the $4$th (and therefore also the $5$th)
entry $3$. 

\vskip.1truein

Example: $41211 \leftrightarrow 23212$:

\vskip.1truein

From first element statistic to 1's statistic:

$$41211 \rightarrow 41235 \rightarrow 23415 \rightarrow 23212$$

Explanation: 1st $\rightarrow$ is parking outcome, 2nd $\rightarrow$
is inverse, 3rd $\rightarrow$: compare $41211$ against $41235$, we
have one repeating segment $111$ in $41211$, which corresponds to
$135$ in $41235$. We then inspect $23415$ and interpret this as entry
$3$ and entry $5$ both have the same value as entry $1$ (which is
$2$). This changes $23415$ to $23212$. For $111$ and $135$, the
starting digits are the same, and there is no discrepancy to address. 

From 1's statistic to first element statistic:

$$23212 \rightarrow 23415 \rightarrow 41235 \rightarrow 41211$$

Explanation: 1st $\rightarrow$ is parking outcome, 2nd $\rightarrow$
is inverse, 3rd $\rightarrow$: compare $23212$ against $23415$, we
have one repeating segment $222$ in $23212$, which corresponds to
$245$ in $23415$. We then inspect $41235$ and interpret this as entry
$4$ and entry $5$ both have the same value as entry $2$ (which is
$1$). This changes $41235$ to $41211$. For $222$ and $245$, the
starting digits are the same, and there is no discrepancy to address.} 

We establish a recurrence relation for the pair of statistics
(unluckiness, leading elements) and (unluckiness, 1's). Write 
   $$ P_n(x, y)=\sum_{\pi \in \PF(n)} x^{\unl(\pi)} y^{\lel(\pi)} $$
and
  $$ Q_n(x, y)=\sum_{\pi \in \PF(n)} x^{\unl(\pi)} y^{\one(\pi)}. $$
Note that $P_n(x, y)$ is a special (deterministic) case of Theorem
\ref{thm:standalone} with $k=r=1$. As in the proof of Theorem
\ref{un1-general}, we focus on the last car. The parking protocol
implies that $(\pi_1, \dots, \pi_{n-1})$ may be decomposed into two
parking functions $\alpha \in \PF(i)$ and $\beta \in \PF(n-1-i)$,
where $0\leq i\leq n-1$, $\betaR$ is formed by subtracting $i+1$ from
the relevant entries in $\pi$, and $\alpha$ and $\beta$ do not
interact with each other. This open spot $i+1$ could be either the
same as $j$, the preference of the last car, in which case the car
parks directly; or $i+1$ could be bigger than $j$, in which case the
car travels forward to park. Also, depending on whether $\pi_1<i+1$ or
$\pi_1>i+1$ (note that it is impossible for $\pi_1=i+1$), the count on
$\lel(\pi)$ would be different, as $j$ has the possibility
of equalling $\pi_1$ in the former case but no possibility of
equalling $\pi_1$ in the latter case. \old{and
  $$R_n(x, y)=\sum_{\pi \in \PF(n)}
    x^{\unl(\pi)} y^{\one(\pi)} z^{\#\{i\colon  \pi_i=1\}}.$$} 

\begin{multline*}
P_{n}(x, y)=\sum_{i=0}^{n-1} \left(\binom{n-2}{i-1} (xy+(i-1)x+1) P_i(x, y) P_{n-i-1}(x, 1)\right.\\ \left.+\binom{n-2}{i} (ix+1) P_i(x, 1) P_{n-i-1}(x, y)\right).
\end{multline*}
 
\begin{equation*}
Q_{n}(x, y)=\sum_{i=1}^{n-1} \binom{n-1}{i} (xy+(i-1)x+1) Q_i(x, y) Q_{n-i-1}(x, 1)+yQ_{n-1}(x, 1).
\end{equation*}

\old{If $\pi_1 \neq 1$, then
\begin{align*}
&R_{n}(x, y, z)=\sum_{i=1}^{n-2} \left(\binom{n-2}{i-1} (xy+xz+(i-2)x+1) R_i(x, y, z) R_{n-i-1}(x, 1, 1)+\right. \notag \\
&\left.+\binom{n-2}{i} (xz+(i-1)x+1) R_i(x, 1, z) R_{n-i-1}(x, y, 1)\right) \notag \\
&+R_{n-1}(x, y, 1)+(xy+xz+(n-3)x+1)R_{n-1}(x, y, z).
\end{align*}}

\old{\textcolor{red}{From the above recurrence relation, we give an alternate proof that $$P_n(y)=Q_n(y)=y(y+n)^{n-1}.$$}

\begin{equation*}
P_{n}(y)=\sum_{i=0}^{n-1} \left(\binom{n-2}{i-1} (y+i) P_i(y) P_{n-i-1}(1)+\binom{n-2}{i} (i+1) P_i(1) P_{n-i-1}(y)\right),
\end{equation*}
 
\begin{equation*}
Q_{n}(y)=\sum_{i=0}^{n-1} \binom{n-1}{i} (y+i) Q_i(y) Q_{n-i-1}(1).
\end{equation*}

We proceed by induction. Base case is straightforward. For the
inductive step, we utilize Abel's extension of the binomial
theorem. Let 
\begin{equation}\label{b}
A_n(x, y; p, q)=\sum_{s=0}^n \binom{n}{s} (x+s)^{s+p} (y+n-s)^{n-s+q}.
\end{equation}
Then
\begin{align*}
&P_{n}(y)=y\sum_{i=0}^{n-1} \left(\binom{n-2}{i-1} (y+i)^{i} (n-i)^{n-i-2}+\binom{n-2}{i} (i+1)^{i} (y+n-i-1)^{n-i-2} \right) \notag \\
%&=y\sum_{i=0}^{n-2} \binom{n-2}{i} (y+i+1)^{i+1} (n-i-1)^{n-i-3}+y\sum_{i=0}^{n-2} \binom{n-2}{i} (i+1)^{i} (y+n-i-1)^{n-i-2}  \notag \\
&=yA_{n-2}(1, y+1; -1, 1)+yA_{n-2}(1, y+1; 0, 0).
\end{align*}
\begin{align*}
&Q_n(y)=y\sum_{i=0}^{n-1} \binom{n-1}{i} (y+i)^{i} (n-i)^{n-i-2}=yA_{n-1}(1, y; -1, 0)=y(y+n)^{n-1}.
\end{align*}
We may rewrite $Q_n(y)$ as 
\begin{align*}
&Q_n(y)=y\sum_{i=0}^{n-1} \left(\binom{n-2}{i-1} (y+i)^{i} (n-i)^{n-i-2}+\binom{n-2}{i} (y+i)^{i} (n-i)^{n-i-2} \right) \notag \\
&=yA_{n-2}(1, y+1; -1, 1)+yA_{n-2}(2, y; 0, 0).
\end{align*}
The conclusion that $P_n(y)=Q_n(y)$ follows once we note that $A_{n-2}(2, y; 0, 0)=A_{n-2}(1, y+1; 0, 0)$ \cite[Section 1.5]{Riordan}.

%We now proceed by induction to show that
%\begin{equation*}
%P_n(x, y)=Q_n(x, y)=y\prod_{i=1}^{n-1} (xy+(i-1)x+(n-i+1)).
%\end{equation*}
%Base case is straightforward. For the inductive step, to show that $P_n(x, y)=Q_n(x, y)$, it suffices to show that
%\begin{multline*}
%\sum_{i=1}^{n-2} \binom{n-2}{i} (ix+1) P_i(x, 1) P_{n-i-1}(x, y)+P_{n-1}(x, y)\\
%=\sum_{i=1}^{n-2} \binom{n-2}{i} (xy+(i-1)x+1) P_i(x, y) P_{n-i-1}(x, 1)+yP_{n-1}(x, 1).
%\end{multline*}

%We recognize that the left-hand side of the above equation may be rewritten as
%\begin{equation*}
%\sum_{j=1}^{n-2} \binom{n-2}{j-1} ((n-j-1)x+1) P_j(x, y) P_{n-j-1}(x, 1)+P_{n-1}(x, y).
%\end{equation*}

\textcolor{red}{The all-encompassing formula below will render
  Proposition \ref{thm:unlucky_1_car} a corollary. Proposition
  \ref{thm:standalone} stays as it is established for generic $(r,
  k)$-parking functions, and under the extra probabilistic scenario.}} 

The recurrence formulas for $P_n(x, y)$ and $Q_n(x, y)$ look very
different, yet we will show that $P_n(x, y)=Q_n(x, y)$ in Theorem
\ref{thm:all}. We first present a direct combinatorial argument for
counting the number of parking functions $\pi \in \PF(n)$ with
$\pi_1=1$. This will shed light on the structure of parking functions
and will be useful in the proof of Theorem \ref{thm:all}. 
\begin{lemma}\label{simple}
  We have
  \begin{equation*}
      \# \{\pi \in \PF(n): \pi_1=1\} =2(n+1)^{n-2}.    
\end{equation*}
\end{lemma}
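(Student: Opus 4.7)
The plan is to establish a simple bijection between $\{\pi \in \PF(n) : \pi_1 = 1\}$ and the set $\PF(n-1, n)$ of parking functions of $n-1$ cars on $n$ spots, and then invoke the known cardinality $\lvert\PF(n-1, n)\rvert = 2(n+1)^{n-2}$.

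First, I would define the forward map $\Phi(\pi) = (\pi_2, \ldots, \pi_n)$, which simply deletes the leading entry. To verify that $\Phi(\pi)$ lies in $\PF(n-1, n)$, let $b_1 \leq b_2 \leq \cdots \leq b_n$ be the increasing rearrangement of $\pi$. The hypothesis $\pi_1 = 1$ forces $b_1 = 1$ (since $1$ is necessarily the minimum entry), so the increasing rearrangement of $(\pi_2, \ldots, \pi_n)$ is precisely $(b_2, b_3, \ldots, b_n)$. The parking-function inequality $b_{i+1} \leq i+1$ for $i = 1, \ldots, n-1$, which is part of $\pi \in \PF(n)$, is exactly the defining condition for membership in $\PF(n-1, n)$, whose $i$th sorted entry $c_i$ must satisfy $c_i \leq n - (n-1) + i = i+1$.

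Second, I would construct the inverse $\Psi(\pi') = (1, \pi'_1, \ldots, \pi'_{n-1})$. Given $\pi' \in \PF(n-1, n)$ with sorted rearrangement $c_1 \leq \cdots \leq c_{n-1}$ satisfying $c_i \leq i+1$, prepending a $1$ yields a vector in $[n]^n$ whose sorted rearrangement is $(1, c_1, \ldots, c_{n-1})$. Checking $1 \leq 1$ and $c_i \leq i+1$ for each $i = 1, \ldots, n-1$ confirms $\Psi(\pi') \in \PF(n)$ with leading entry $1$. The two maps are manifestly mutual inverses.

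Finally, I would apply the formula $\lvert\PF(m, N)\rvert = (N - m + 1)(N + 1)^{m-1}$, which follows from Pollak's circle argument (placing $m$ cars on a circle of $N+1$ spots, where rotational symmetry distributes the $N+1-m$ empty spots uniformly among the $N+1$ positions) and is already recorded in Section 2 of the paper. Specializing to $m = n-1$ and $N = n$ gives $\lvert\PF(n-1, n)\rvert = 2(n+1)^{n-2}$, completing the proof. The only place where any care is required is in matching the parking-function thresholds after deleting or inserting a $1$ from the sorted multiset — the index shifts from $b_j \leq j$ in $\PF(n)$ to $c_i = b_{i+1} \leq i+1$ in $\PF(n-1, n)$; beyond this indexing bookkeeping, the argument is entirely mechanical.
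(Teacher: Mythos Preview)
Your proof is correct and clean: the bijection $\pi \leftrightarrow (\pi_2,\ldots,\pi_n)$ between $\{\pi\in\PF(n):\pi_1=1\}$ and $\PF(n-1,n)$ works exactly as you describe, and the formula $|\PF(m,N)|=(N-m+1)(N+1)^{m-1}$ then gives $2(n+1)^{n-2}$ immediately.

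The paper takes a different route. Rather than reducing to a known count, it argues directly via Pollak's circle: place cars $2,\ldots,n$ on a circle of $n+1$ spots (giving $(n+1)^{n-1}$ choices), observe that two spots remain empty, and note that car~1 will end up with preference~1 after rotation precisely when it is placed in the spot immediately clockwise of one of those two empty spots --- hence the factor~$2$. Dividing by $n+1$ for the rotation yields $2(n+1)^{n-2}$. Your argument is arguably the more modular one, simply invoking an established formula; the paper's argument, by contrast, extracts a structural fact about the circular picture (the two distinguished ``adjacent-to-empty'' positions) that is then reused verbatim in the proof of Theorem~\ref{thm:all}, Case~B. So while your approach suffices to establish the lemma in isolation, the paper's proof does double duty by setting up machinery needed later.
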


\begin{proof}
We assign cars $2, \dots, n$ independently on a circle of length
$n+1$, of which there are $(n+1)^{n-1}$ possibilities. This leaves two
empty spots on the circle. Now in order for car $1$'s preference to be
recorded as $1$ after circular rotation, car $1$ has to choose the
adjacent spot (clockwise) to either of the empty spots, so only two
possibilities. See Figure \ref{car1:illustration}. Note that a valid
parking function is produced when spot $n+1$ is left unoccupied after
circular rotation, and one $n+1$ scalar factor goes away. 

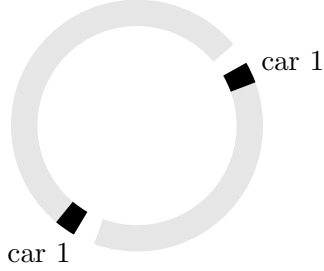
\begin{figure}
\begin{tikzpicture}[scale=1.5]
\draw[color=gray!20, line width=10pt] (40:1) arc (40:240:1);
\draw[color=gray!20, line width=10pt] (250:1) arc (250:390:1);

\draw[color=black, line width=10pt] (230:1) arc (230:240:1);
\draw[color=black, line width=10pt] (380:1) arc (380:390:1);

\node[anchor=north east] at (-0.5, -0.95) {car $1$};
\node[anchor=north east] at (1.75, 0.75) {car $1$};
\end{tikzpicture}
\caption{Possible positions of car 1 on the circle.}
\label{car1:illustration}
\end{figure}
\end{proof}

\begin{theorem}\label{thm:all}
\begin{align}\label{master}
&\sum_{\pi \in \PF(n)} x^{\lel(\pi)} y^{\one(\pi)}
  z^{\unl(\pi)} \notag \\ 
&=xy\left((n-1)\prod_{i=1}^{n-2} (xz+yz+(i-1)z+n-i)+(xyz+1)\prod_{i=1}^{n-2} (xyz+iz+n-i)\right).
\end{align}
\end{theorem}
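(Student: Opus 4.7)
My proof plan is to split $S := \sum_{\pi \in \PF(n)} x^{\lel(\pi)} y^{\one(\pi)} z^{\unl(\pi)}$ as $S = S_1 + S_2$ according to whether $\pi_1 = 1$, and to compute each piece by adapting Pollak's circular argument in the style of Theorem \ref{thm:standalone}. The central observation is that when $\pi_1 = 1$ the cars counted by $\lel$ are exactly those counted by $\one$, so $\lel(\pi) = \one(\pi)$, and under the substitution $u := xy$ we get $S_1 = \sum_{\pi: \pi_1 = 1} u^{\one(\pi)} z^{\unl(\pi)}$, a two-variable generating function. I would aim to show $S_1 = xy(xyz+1)\prod_{i=1}^{n-2}\bigl((xy+i)z + (n-i)\bigr)$ (the second term in the theorem) and $S_2 = xy(n-1)\prod_{i=1}^{n-2}\bigl((x+y+i-1)z + (n-i)\bigr)$ (the first term) separately.

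For $S_1$, I would combine the bijective setup of Lemma \ref{simple}, in which parking functions with $\pi_1 = 1$ correspond to the $(n+1)^{n-1}$ circular placements of cars $2,\ldots,n$ together with a choice of one of the two clockwise neighbors of the empty spots for car $1$, with the circular argument of Theorem \ref{thm:standalone}. With $u = xy$ playing the role of the $\lel$ variable, each subsequent car $i+1$ should contribute a factor $\bigl(uz + (i-1)z + (n+1-i)\bigr)/(n+1)$ corresponding to the three cases: preference equals spot $1$ (which contributes $uz$ since it is both in $\lel$ and in $\one$ and is forced to be unlucky), preference equals one of the $i-1$ other occupied spots, or preference unoccupied. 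The two placements for car $1$ together with the car-$1$ factor $xy$ should combine to produce the prefactor $xy(xyz+1)$.

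For $S_2$, I would sum over the $n-1$ choices $\pi_1 \in \{2, \ldots, n\}$, for which $\pi_1$ and $1$ are two distinct distinguished spots. A two-distinguished-spot adaptation of the circular argument then yields a per-car factor $xz + yz + (i-1)z + (n-i)$, where $xz$ handles $\pi_{i+1} = \pi_1$ (unlucky, in $\lel$), $yz$ handles $\pi_{i+1} = 1$ (unlucky, in $\one$), $(i-1)z$ the remaining occupied spots, and $(n-i)$ the unoccupied ones. Together with the factor $xy$ from car $1$ and the $n-1$ coming from the sum over $\pi_1$, this should give the claimed formula for $S_2$, and then $S_1 + S_2$ yields the theorem.

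The main obstacle will be justifying the clean factorization in the $S_2$ computation: the $yz$ factor for the event $\pi_{i+1} = 1$ implicitly assumes that spot $1$ is already occupied when car $i+1$ arrives, which is automatic when $\pi_1 = 1$ but not when $\pi_1 \neq 1$, since in the latter case spot $1$ is initially empty and becomes occupied only when the first car preferring it (or a car bumped to it) arrives. Handling this delicately --- most likely via the circular symmetry between the two distinguished spots and the fact that only configurations satisfying the parking-function property (in particular, spot $1$ ending up occupied) contribute to the sum --- is where the bulk of the technical work lies. The base case $n = 2$ is checked directly by enumerating $\PF(2) = \{(1,1),(1,2),(2,1)\}$.
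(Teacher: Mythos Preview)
Your plan is the paper's proof: same split by whether $\pi_1=1$, same circular argument with distinguished spots, same target formulas for the two pieces. The obstacle you flag for $S_2$ is the one real gap, and the paper fills it with a reordering trick. In any parking function some car prefers spot~$1$ (the car that ends up parked there must prefer it, since cars only move forward), so let car~$j$ be the first such car and move it to enter immediately after car~$1$. This reordering preserves $\unl$, $\lel$, and $\one$, because cars $2,\dots,j-1$ never prefer or reach spot~$1$ and hence park exactly as before. After the reorder both distinguished spots $\pi_1$ and $1$ are occupied before any of the remaining $n-2$ cars enter, so your per-car factor $xz+yz+(i-2)z+(n-i+1)$ (with $i$ spots taken, $2\le i\le n-1$) is rigorously justified. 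The prefactor $xy$ then records car~$1$ (lucky, contributing $x$) together with car~$j$ (lucky, contributing $y$).

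One correction: the factor $n-1$ in $S_2$ does \emph{not} come from summing over $\pi_1\in\{2,\dots,n\}$, since those contributions are unequal (already for $n=3$ there are five parking functions with $\pi_1=2$ but only three with $\pi_1=3$). Rather, it emerges from the total count $(n+1)^{n-1}-2(n+1)^{n-2}=(n-1)(n+1)^{n-2}$ of parking functions with $\pi_1\neq1$ (Lemma~\ref{simple}); the $n-2$ probability factors each carry a denominator $n+1$, and these cancel against $(n+1)^{n-2}$ to leave $n-1$. Your $S_1$ sketch is also on target; the paper obtains the $(xyz+1)$ factor by isolating, via the two-choice structure in the proof of Lemma~\ref{simple}, one further special car among cars $2,\dots,n$, so that only $n-2$ cars (not $n-1$) feed the product.
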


\begin{proof}
We classify into two situations: $\pi_1 \neq 1$ and $\pi_1=1$.

Case A: $\pi_1 \neq 1$. As usual, we assign $n$ cars independently on
a circle of length $n+1$ and then apply circular rotation. Recall that
for classical parking functions, spot $1$ is always taken by some
car. Since $\pi_1 \neq 1$, there are two special spots on the circle,
one corresponding to $\pi_1$ and the other corresponding to $1$ (both
pre-rotation). These two special spots must correspond to lucky
cars. We could think that the first car that picks spot $1$ enters the
street right after the leading car that prefers spot $\pi_1$ and ahead
of all other cars, and this will have no effect on either the
unluckiness, or the leading elements statistic, or the 1's
statistic. When a new car comes in and picks the special spot which
will rotate to spot $\pi_1$, it must be unlucky. This situation adds
$1$ to the leading elements statistic. Similarly, when the new car
picks the special spot which will rotate to spot $1$ upon entering, it
must also be unlucky. This situation adds $1$ to the 1's
statistic. Lastly, if the new car picks any of the already taken
spots, it must be unlucky as well. Suppose $i$ spots (including the
two special spots corresponding to $\pi_1$ and $1$) have been taken,
where $2\leq i \leq n-1$, then with the entrance of the new car, we
have 
$$\PR(\text{new car picks spot 1})=\frac{1}{n+1},$$

$$\PR(\text{new car picks spot } \pi_1)=\frac{1}{n+1},$$

$$\PR(\text{new car picks an unavailable spot that is neither spot 1 nor spot } \pi_1)=\frac{i-2}{n+1},$$

$$\PR(\text{new car picks an available spot})=\frac{n-i+1}{n+1}.$$
From Lemma \ref{simple}, the total number of parking functions $\pi \in \PF(n)$ where $\pi_1 \neq 1$ is given by
$$(n+1)^{n-1}-2(n+1)^{n-2}=(n+1)^{n-2}(n-1).$$ This implies that
\begin{align*}
&\sum_{\pi \in \PF(n)\colon \pi_1 \neq 1} x^{\lel(\pi)}
  y^{\one(\pi)} z^{\unl(\pi)} \notag \\  
=&(n+1)^{n-2}(n-1) xy\prod_{i=2}^{n-1}
\left(xz\frac{1}{n+1}+yz\frac{1}{n+1}+z\frac{i-2}{n+1}+\frac{n-i+1}{n+1}\right)
\notag \\ 
=&xy(n-1)\prod_{i=1}^{n-2} (xz+yz+(i-1)z+n-i).
\end{align*}

Case B: $\pi_1=1$. Since $\pi_1=1$, there is only one special spot on
the circle, corresponding to $1$ pre-rotation. This special spot must
correspond to a lucky car. But from the proof of Lemma \ref{simple},
we see that there is some other car that is also special. It either
parks in the adjacent-to-empty spot that is picked by car $1$ (thus
repeating the preference of car $1$) or it parks in the other
adjacent-to-empty spot (thus lucky). The first situation contributes
$xyz$ to the generating function, while the second situation
contributes $1$ to the generating function. Let us now focus on the
other $n-2$ cars. Suppose $i$ spots (including the special spot
corresponding to $1$ and the other special car) have been taken, where
$2\leq i \leq n-1$, then with the entrance of the new car, we have 
  $$\PR(\text{new car picks spot 1})=\frac{1}{n+1},$$

$$\PR(\text{new car picks an unavailable spot that is not spot 1})=\frac{i-1}{n+1},$$

$$\PR(\text{new car picks an available spot})=\frac{n-i+1}{n+1}.$$
Again from the proof of Lemma \ref{simple}, the total number of
parking preferences for these non-special $n-2$ cars is given by
$(n+1)^{n-2}$. This implies that 
\begin{align*}
&\sum_{\pi \in \PF(n)\colon\pi_1=1} x^{\lel(\pi)} y^{\one(\pi)}
  z^{\unl(\pi)} \notag \\ 
=&(n+1)^{n-2}xy(xyz+1)\prod_{i=2}^{n-1} \left(xyz\frac{1}{n+1}+z\frac{i-1}{n+1}+\frac{n-i+1}{n+1}\right) \notag \\
=&xy(xyz+1)\prod_{i=1}^{n-2} (xyz+iz+n-i).
\end{align*}
\end{proof}

\begin{corollary}
Taking $x=1$ and $z=1$ in (\ref{master}), we have
\begin{equation}\label{ch}
\sum_{\pi \in \PF(n)} y^{\one(\pi)}=y(y+n)^{n-1}.
\end{equation}
This agrees with the corresponding (deterministic) formula in Yan
\cite[Corollary 1.16]{Yan}. The coefficients count forests of rooted
trees with a specified number of components (trees). We note that
(\ref{ch}) is also the characteristic polynomial (after replacing $y$
by $-y$ and multiplying by $(-1)^n$) of both the Shi arrangement and
the Ish arrangement. See Stanley \cite[Theorem 5.16]{Stanley2} and
Armstrong and Rhoades \cite[Theorem 3.2]{AR}. 
\end{corollary}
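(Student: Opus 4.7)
The plan is simply to carry out the specialization $x=1$, $z=1$ in the master identity (\ref{master}) and notice that both products collapse to the same monomial in $(y+n)$. First I would substitute into the $i$-th factor of the first product, obtaining
\[
xz + yz + (i-1)z + n - i \;\bigr|_{x=z=1} \;=\; 1 + y + (i-1) + (n-i) \;=\; y + n,
\]
and into the $i$-th factor of the second product, obtaining
\[
xyz + iz + (n-i) \;\bigr|_{x=z=1} \;=\; y + i + (n-i) \;=\; y + n.
\]
So each of the two products over $i = 1, \dots, n-2$ equals $(y+n)^{n-2}$, independently of the shift. Next I would factor this common term out and compute the remaining coefficient: the first term contributes $(n-1)$, the second contributes $(xyz+1)\bigr|_{x=z=1} = y+1$, and the sum is $(n-1) + (y+1) = y + n$. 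Combining with the prefactor $xy\bigr|_{x=1} = y$, one gets
\[
\sum_{\pi \in \PF(n)} y^{\one(\pi)} \;=\; y \cdot (y+n)^{n-2} \cdot (y+n) \;=\; y(y+n)^{n-1},
\]
which is the claimed identity.

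There is essentially no obstacle here: the work has all been done in Theorem~\ref{thm:all}, and the corollary is a direct algebraic collapse. The only mild point worth flagging is that when $n = 1$ or $n = 2$ the products over $i = 1, \dots, n-2$ are empty, so one should verify these base cases separately (for $n=1$ the sum is $y$, matching $y(y+1)^0 = y$; for $n=2$ it is $y + y^2 = y(y+2)$, matching $y(y+2)^1$). The concluding remarks about the coefficients enumerating rooted forests by number of components and about the characteristic polynomials of the Shi and Ish arrangements are interpretive comments citing \cite{Stanley2} and \cite{AR}; no further proof is required for them in this paper.
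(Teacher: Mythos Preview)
Your proposal is correct and matches exactly what the paper intends: the corollary is stated without a separate proof, as it is just the direct substitution $x=z=1$ into (\ref{master}), and your algebra showing both products collapse to $(y+n)^{n-2}$ with remaining factor $(n-1)+(y+1)=y+n$ is precisely that computation carried out in full.
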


\begin{remark}
Using standard probability tools, some asymptotic analysis for the
$1$'s parking statistic was done in Diaconis and Hicks \cite[Theorem
  7]{DH}. 
\end{remark}

Setting $z=1$ in our ``master formula'' (\ref{master}), we may use the
multinomial theorem to get a simple explicit formula for the
coefficients. We will present a direct combinatorial proof for the
formula. The following structural result about multisets will be
useful in our proof. 

\begin{lemma}\label{lm:structure}
Suppose for some $k\geq 1$, elements $1, 2, \dots, k$ all appear in a
parking function $\pi \in \PF(n)$. Fix the elements in $\pi$ that
are equal to $1, 2, \dots, k$ (denote the total number of such
elements by $s$) and rotate the remaining elements of $\pi$ in the
cycle $(k+1, k+2, \dots, n, n+1)$. If $s<n$, then exactly $s-k+1$ of
these are parking functions. 
\end{lemma}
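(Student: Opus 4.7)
The plan is to reduce the lemma to the generalized Pollak circle argument for $(r,k)$-parking functions already employed in the proof of Theorem~\ref{thm:general_k_r}. Let $T\subseteq [n]$ be the set of positions where $\pi_i\in\{1,\dots,k\}$, so $|T|=s$, and call the positions in $[n]\setminus T$ \emph{free}; there are $n-s$ of them, each carrying a value in $\{k+1,\dots,n\}$. Since every value in $\{1,\dots,k\}$ appears, we have $s\geq k$, and since $\pi$ is a parking function, the fixed part on its own already satisfies the parking condition for its sorted entries.

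First I would show that the rotated sequence $\pi^{(c)}$ (obtained by leaving the $T$-entries alone and shifting the free entries by $c$ in the cycle $(k+1,\dots,n,n+1)$) is a parking function in $[n]$ if and only if the free entries, after subtracting $k$, form a $(1,s-k+1)$-parking function of length $n-s$ in the sense of Section~3. To see this, let $b_1\leq\cdots\leq b_n$ denote the sorted $\pi^{(c)}$. The fixed values occupy $b_1,\dots,b_s$ (all in $\{1,\dots,k\}$) and are unchanged by rotation, so $b_i\leq i$ for $i\leq s$ holds automatically. Writing $b_{s+j}=k+c_j$ for the reduced sorted free values $c_1\leq\cdots\leq c_{n-s}$ (with entries in $\{1,\dots,n-k+1\}$), the remaining conditions $b_{s+j}\leq s+j$ translate exactly to $c_j\leq (s-k)+j$, the defining inequality of a $(1,s-k+1)$-parking function of length $n-s$ (taking $r=1$ and replacing the ``$k$'' of Section~3 by $s-k+1$). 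Since $s-k+j\leq n-k$, this inequality automatically forces $c_j\neq n-k+1$, i.e.\ the value $n+1$ does not appear in $\pi^{(c)}$.

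Finally I would apply Pollak's circle argument in the $(r,k)$-parking function form used in the proof of Theorem~\ref{thm:general_k_r}, with $r=1$, parameter $s-k+1$, and length $m=n-s$. The circle has $(s-k+1)+(n-s)=n-k+1$ spots, which matches the rotation cycle $(k+1,\dots,n+1)$, and the argument guarantees that exactly $s-k+1$ of the $n-k+1$ rotations of the reduced free entries produce valid $(1,s-k+1)$-parking functions. By the equivalence from the previous paragraph, these are precisely the rotations for which $\pi^{(c)}$ is a parking function in $[n]$, yielding the desired count.

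The main step is recognizing that the rotation on the free part of $\pi$ is exactly Pollak's circular action on a $(1,s-k+1)$-parking function of length $n-s$; once that observation is in place, everything follows from a generalized Pollak argument already in the paper. The hypothesis $s<n$ just ensures that there is at least one free entry so the rotation is nondegenerate.
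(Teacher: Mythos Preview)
Your proof is correct and follows essentially the same route as the paper's: both reduce the problem to a Pollak circle argument on the $n-s$ free entries over a circle of $n-k+1$ spots, yielding $s-k+1$ valid rotations. The paper phrases this as ``spots $1,\dots,k$ are all taken, so the parking situation is equivalent to parking $n-s$ cars on a circle with $n+1-k$ spots,'' whereas you make the equivalence explicit via the sorted-value characterization and then invoke the $(r,k)$-parking function version of Pollak from Theorem~\ref{thm:general_k_r}; the underlying idea is identical.
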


\begin{proof}
We create an extra parking spot $n+1$ and let the cars go around in a
circle if necessary. By a slight generalization of Pollak's original
argument \cite{Pollak}, we note that when there are $l$ cars parking
on a circle with $m$ spots, exactly $m-l$ of the rotations of a fixed
parking preference will leave spot $n+1$ unoccupied and are thus valid
parking functions. Now spots $1, 2, \dots, k$ are all taken and the
parking situation is equivalent to parking $n-s$ cars on a circle with
$n+1-k$ spots, so $(n+1-k)-(n-s)=s-k+1$ rotations will give valid
parking functions. 
\end{proof}

\begin{proposition}\label{thm:master}
%For a parking function $\pi \in \PF(n)$, denote $\one(\pi)$ by
%$i(\pi)$ and $\#\{i\colon  \pi_i=1\}$ by $j(\pi)$.
  We have for $s \neq t$,
\begin{equation*} 
\#\{\pi \in \PF(n)\colon  \lel(\pi)=s+1 \text{ and } \one(\pi)=t+1\}=\binom{n-2}{s, t, n-s-t-2} (n-1)^{n-s-t-1},    
\end{equation*}
and for $s=t$,
\begin{multline*}
\#\{\pi \in \PF(n)\colon  \lel(\pi)=s+1 \text{ and } \one(\pi)=s+1\}\\=\binom{n-2}{s, s, n-2s-2} (n-1)^{n-2s-1}+\binom{n-2}{s} n^{n-s-2}+\binom{n-2}{s-1} n^{n-s-1}.    
\end{multline*}
\end{proposition}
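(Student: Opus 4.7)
The plan is to count $|S(s,t)|=|\{\pi\in\PF(n):\lel(\pi)=s+1,\ \one(\pi)=t+1\}|$ by splitting on whether $\pi_1=1$, in parallel to the Case A/Case B decomposition used in the proof of Theorem~\ref{thm:all}. Both cases are handled through Lemma~\ref{lm:structure} with $k=1$: fixing the preference-$1$ positions and rotating the remaining circle-preferences in the cycle $(2,3,\dots,n+1)$ partitions configurations into orbits of size $n$, each containing exactly (number of fixed preference-$1$ elements) parking functions.

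For Case A ($\pi_1\neq 1$), I first choose which $t+1$ cars among $\{2,\dots,n\}$ are preference-$1$ cars, giving $\binom{n-1}{t+1}$. By Lemma~\ref{lm:structure} each orbit of the remaining $n-t-1$ cars' circle-preferences yields $t+1$ parking functions. Since rotation shifts all non-preference-$1$ preferences by a common amount, the condition ``$\pi_i=\pi_1$'' is preserved, so $\lel$ is constant on each orbit. I enumerate orbits by fixing car~$1$'s circle-preference to $2$ in each representative: exactly $s$ of the remaining $n-t-2$ cars should share this preference while the other $n-t-s-2$ may choose freely among the $n-1$ other circle-values, producing $\binom{n-t-2}{s}(n-1)^{n-t-s-2}$ orbits with $\lel=s+1$. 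Multiplying the factors together, then using $(t+1)\binom{n-1}{t+1}=(n-1)\binom{n-2}{t}$ and $\binom{n-2}{t}\binom{n-t-2}{s}=\binom{n-2}{s,t,n-s-t-2}$, gives $\binom{n-2}{s,t,n-s-t-2}(n-1)^{n-s-t-1}$.

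For Case B ($\pi_1=1$), the constraint forces $\lel(\pi)=\one(\pi)$, so the contribution appears only when $s=t$. Applying Lemma~\ref{lm:structure} with $s+1$ preference-$1$ cars (one of which is car~$1$) gives a total of $\binom{n-1}{s}(s+1)n^{n-s-2}$. The Pascal-type identity $(s+1)\binom{n-1}{s}=\binom{n-2}{s}+n\binom{n-2}{s-1}$ (derivable via absorption $s\binom{n-1}{s}=(n-1)\binom{n-2}{s-1}$ followed by Pascal) rewrites this as $\binom{n-2}{s}n^{n-s-2}+\binom{n-2}{s-1}n^{n-s-1}$. The two summands match the two sub-cases from the proof of Theorem~\ref{thm:all}, Case B: the ``second special car'' from the proof of Lemma~\ref{simple} either occupies the other adjacent-to-empty spot (yielding the $\binom{n-2}{s}n^{n-s-2}$ term) or repeats car~$1$'s preference (yielding the $\binom{n-2}{s-1}n^{n-s-1}$ term).

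Combining Cases A and B produces the proposition as stated. The main technical points are verifying the rotation-invariance of $\lel$ on orbits in Case A and choosing the canonical orbit representative so as to enumerate orbits without over- or under-counting; both are clean once one observes that rotation acts simultaneously on all non-preference-$1$ cars, including car~$1$ itself.
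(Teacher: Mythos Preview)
Your proposal is correct and follows essentially the same approach as the paper: both split on $\pi_1\neq 1$ versus $\pi_1=1$ and apply Lemma~\ref{lm:structure} (the Pollak circle argument) to count the parking functions in each case, with Case~B handled identically. The only organizational difference is in Case~A: the paper fixes upfront both the preference-$1$ positions and the preference-$\pi_1$ positions (together with the value of $\pi_1$) and then applies the fraction $(t+1)/(n-1)$ to the remaining free cars, whereas you fix only the preference-$1$ positions and exploit the rotation-invariance of $\lel$ to enumerate orbits via a canonical representative with $\pi_1=2$. These are two packagings of the same argument---your version invokes Lemma~\ref{lm:structure} with $k=1$ a bit more transparently, while the paper's version amounts to the same count once one unravels $\binom{n-1}{s,t+1,n-s-t-2}=\binom{n-1}{t+1}\binom{n-t-2}{s}$.
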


\begin{proof}
We use a variant of Pollak's circle argument \cite{Pollak}. We
classify into two situations: $\pi_1 \neq 1$ and $\pi_1=1$. 

Case A: $\pi_1 \neq 1$. Car $1$'s parking preference is already fixed
to be $\pi_1$, but we have to choose the parking preferences for the
other $n-1$ cars. This contributes a multinomial coefficient to the
list of preferences $\pi$. Note that parking functions are invariant
under the action of the symmetric group $\Sym_n$ permuting the $n$
cars. We create an extra parking spot $n+1$ and let the cars go around
in a circle if necessary. Since cars with preference $1$ will never
back up to spot $n+1$, we could think that all the cars that prefer
spot $1$ enter the street right after the leading car that prefers
spot $\pi_1$ and ahead of all other cars. We now focus attention on
these trailing cars, and note that $s$ of these cars prefer spot
$\pi_1$ (which has $n-1$ possible values since $\pi_1 \neq 1$), and
each of the remaining $n-s-t-2$ cars could choose any spot from $\{1,
\dots, n+1\} \setminus \{1, \pi_1\}$. The parking functions among
these are exactly those for which spot $n+1$ remains unoccupied. Since
there are $n-t-2$ cars in total and $n-1$ spots, this is a fraction
$(t+1)/(n-1)$ of these functions, as explained in Lemma
\ref{lm:structure}. Thus we obtain 
  $$\frac{t+1}{n-1}(n-1)(n-1)^{n-s-t-2}=(t+1)(n-1)^{n-s-t-2}$$
valid preferences from these later cars. Putting all this together,
\begin{align*}
&\#\{\pi \in \PF(n)\colon  \pi_1 \neq 1, \hspace{.1cm} \lel(\pi)=s+1 \text{ and } \one(\pi)=t+1\}\notag \\=&\binom{n-1}{s, t+1, n-s-t-2} (t+1)(n-1)^{n-s-t-2}=\binom{n-2}{s, t, n-s-t-2} (n-1)^{n-s-t-1}.
\end{align*}

Case B: $\pi_1=1$. The argument works similarly but is easier. Car
$1$'s parking preference is fixed to be $1$ so $s=t$. We choose the
parking preferences for the other $n-1$ cars. This contributes a
multinomial (actually, binomial in this case) coefficient to the list
of preferences $\pi$. Again we could think that all the cars that
prefer spot $1$ enter the street before all the other cars. Focusing
attention on these trailing cars, we note that each of the $n-s-1$
cars could choose any spot from $\{2, \dots, n+1\}$. The parking
functions among these are exactly those for which spot $n+1$ remains
unoccupied. Since there are $n-s-1$ cars in total and $n$ spots, this
is a fraction $(s+1)/n$ of these functions, as explained in Lemma
\ref{lm:structure}. Thus we obtain 
  $$\frac{s+1}{n}n^{n-s-1}=(s+1)n^{n-s-2}$$
valid preferences from these later cars. Putting all this together,
\begin{align*}
&\#\{\pi \in \PF(n)\colon  \pi_1=1, \hspace{.1cm} \lel(\pi)=s+1 \text{ and }
  \one(\pi)=s+1\}\notag \\=&\binom{n-1}{s} (s+1)n^{n-s-2} 
=\binom{n-2}{s} n^{n-s-2}+\binom{n-2}{s-1} n^{n-s-1}.
\end{align*}

% 6/6/23 "the" inserted twice
Finally, we note that the $s\neq t$ term could only come from Case A,
whereas the $s=t$ term could come from either Case A or Case B. 
\end{proof}

\old{\textit{Copied from email. Will write it better later.} As in
  Pollak’s original argument, every equivalence class of parking
  preferences obtained from circular rotation will have a fixed ratio
  of them being valid parking functions. 
 
For example, consider $n=5$ and $\pi=21134$, so $s=0$ and $t=1$. We
know there should be $(t+1)/(n-1)=1/2$ of them in the rotational
equivalence class that should be parking functions. 
 
We list them (keeping 1's fixed and adding 1 to every other digit mod
5, but write 6 instead of 1, as implicitly in the proof):  
$$21134 \rightarrow 31145 \rightarrow 41156 \rightarrow 51162$$
We see that indeed only 21134 and 31145 are valid parking functions.

\textit{More thoughts...} Fix the positions of the non-leading and
non-1 elements. For any assignment of leading element and these
``other'' elements, we may generate an equivalence class of
$(s+t+1)!/(s!t!)$ valid parking functions from circular rotation and
permutation among 1's and leading elements. To find the counterpart,
we pick one parking function from the equivalence class and keep a
particular $1$ fixed and flip leading elements to $1$'s and $1$'s to
leading elements, and then also consider the equivalence class
described above. We note that which parking function we pick or which
$1$ we keep fixed will have no effect on the equivalence class we
obtain. And indeed, specification will stay the same as observed from
numerics. 

\textcolor{red}{This gives an equivalence class bijection between
  (leading element, 1's) but not individual bijection... Alas,
  unluckiness is not preserved unless we also consider permuting 1's
  and leading elements with the ``other'' elements.} 

Example 1: Take $n=4$.
$$s=0 \text{ and } t=1: 4112 \leftrightarrow 2114.$$

$$s=1 \text{ and } t=0: 2124 \leftrightarrow 2214.$$

Example 2: Take $n=5$.
$$s=0 \text{ and } t=1: 21133 \leftrightarrow 31144.$$

$$s=1 \text{ and } t=0: 21233 \leftrightarrow 22133.$$

Example 3: Take $n=6$.
$$s=t=1: 211332\leftrightarrow 311443\leftrightarrow 212331 \leftrightarrow 313441 \rightarrow 221331 \leftrightarrow 331441.$$

$$s=t=1: 221331 \leftrightarrow 331441 \leftrightarrow 212331 \leftrightarrow 313441 \leftrightarrow 211332 \leftrightarrow 311443.$$}

From Theorems \ref{thm:general_k_r} and \ref{thm:all}, we recognize
that the number of classical parking functions of length $n$ with $k$
repeats coincides with the number of parking functions with $k+1$
elements equal to the leading element and also coincides with the
number of parking functions where there are $k+1$ $1$'s appearing in
the parking preference sequence. See Table
\ref{tree-parking:illustration} for the case where $n=3$. Here the
first two columns are connected by the correspondence between labelled
trees and their Pr\"{u}fer code. The number of $0$'s in the Pr\"{u}fer
code plus $1$ is the same as the number of children of the root vertex
$0$. The bijection between the first and third columns is due to
Pollak \cite{Pollak2}. The number of $0$'s in the Pr\"{u}fer code is
the same as the number of repeats in the parking function. The fourth
column arises from the Pr\"{u}fer code via circular symmetry. We add a
$0$ in front of the Pr\"{u}fer code and interpret $0$ as spot $n+1$ on
the circle. Then there is exactly one rotation of the circle that
gives a valid parking function. The number of $0$'s in the Pr\"{u}fer
code plus $1$ is the same as the number of times the leading element
appears in the parking function. The fifth column is generated from
the labelled tree using a breadth first search as described in Yan
\cite[Section 1.2.3]{Yan}. The number of children of the root vertex
$0$ is the same as the number of $1$'s in the parking preference
sequence. 

We end this section with some observations. Our first observation
concerns corresponding statistics between parking functions and
trees. We note that even though we have the generating function
equality for individual parking statistics, 
 $$\sum_{\pi \in \PF(n)} y^{\lel(\pi)}=\sum_{\pi \in \PF(n)}
  y^{\one(\pi)}=\sum_{T \in \mathcal{T}(n+1)}
  y^{\deg_T(0)}=y(y+n)^{n-1} $$ 
  and
   $$\sum_{\pi \in \PF(n)} x^{\unl(\pi)}=\sum_{T \in \mathcal{T}(n+1)}
      x^{\nld(T)}=\prod_{i=1}^{n-1} (ix+(n-i+1)),$$ 
the equality nevertheless fails for pairs:
\begin{equation*}
\sum_{\pi \in \PF(n)} x^{\unl(\pi)} y^{\lel(\pi)}=\sum_{\pi \in
  \PF(n)} x^{\unl(\pi)} y^{\one(\pi)} \\ \neq \sum_{T \in
  \mathcal{T}(n+1)} x^{\nld(T)} y^{\deg_T(0)}. 
\end{equation*}
Indeed, from Theorem \ref{thm:all}, for parking functions
\begin{equation*}
\sum_{\pi \in \PF(n)} x^{\unl(\pi)} y^{\lel(\pi)}=\sum_{\pi \in
  \PF(n)} x^{\unl(\pi)} y^{\one(\pi)}\\ =y\prod_{i=1}^{n-1}
     (xy+(i-1)x+(n-i+1)),  
\end{equation*}
while for trees, the counterpart formula was derived in Gessel and Seo
\cite[Theorem 6.1]{GS}, which gives 
\begin{equation*}
  \sum_{T \in \mathcal{T}(n+1)} x^{\nld(T)}
  y^{\deg_T(0)}=y\prod_{i=1}^{n-1} (ix+(n-i+y)).  
\end{equation*}
The first counterexample is located when $n=2$. (See Hou \cite{Hou}
for more similar but contrasting generating functions for statistics
in trees.)

Similarly, we also note that even though
$$\sum_{\pi \in \PF(n)} y^{\dis(\pi)}=\sum_{T \in \mathcal{T}(n+1)}
y^{\inv(T)} \quad \text{ and } \quad \sum_{\pi \in \PF(n)} x^{\unl(\pi)}=\sum_{T
  \in \mathcal{T}(n+1)} x^{\edes(T)},$$ 
we have
$$\sum_{\pi \in \PF(n)} x^{\unl(\pi)} y^{\dis(\pi)} \neq \sum_{T \in
  \mathcal{T}(n+1)} x^{\edes(T)} y^{\inv(T)}.$$ 
The first counterexample is located when $n=4$.

Our second observation examines statistics of parking functions
alone. Borrowing notation $\lel(\pi)$ from ``leading elements'', we write $\nlel(\pi)$ for the
total number of cars whose desired spot is the same as that of the second
car. From Theorem \ref{thm:standalone}, Proposition \ref{followup}
and the previous observation, we have 
$$\sum_{\pi \in \PF(n)} x^{\unl(\pi)} y^{\lel(\pi)}=\sum_{\pi \in
  \PF(n)} x^{\unl(\pi)} y^{\nlel(\pi)}=\sum_{\pi \in
  \PF(n)} x^{\unl(\pi)} y^{\one(\pi)}.$$ 
Comparing with Theorem \ref{thm:all}, we will show that the generating
functions for the triple $$(\nlel(\pi), \one(\pi), \unl(\pi)) \quad  \mathrm{and} \quad
(\lel(\pi), \one(\pi), \unl(\pi))$$ are
the same, but the generating function for the triple $(\lel(\pi),
\nlel(\pi), \unl(\pi))$ is different. 

\begin{theorem}\label{thm:correpondence}
\begin{align*}
&\sum_{\pi \in \PF(n)} x^{\nlel(\pi)} y^{\one(\pi)}
  z^{\unl(\pi)} \notag \\  
&=xy\left((n-1)\prod_{i=1}^{n-2} (xz+yz+(i-1)z+n-i)+(xyz+1)\prod_{i=1}^{n-2} (xyz+iz+n-i)\right).
\end{align*}
\end{theorem}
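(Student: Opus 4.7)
The plan is to deduce Theorem \ref{thm:correpondence} from Theorem \ref{thm:all} by exhibiting a simple weight-preserving bijection on $\PF(n)$. Specifically, define $\sigma\colon \PF(n) \to \PF(n)$ to be the transposition of the first two coordinates,
$$\sigma(\pi_1,\pi_2,\pi_3,\ldots,\pi_n) \;=\; (\pi_2,\pi_1,\pi_3,\ldots,\pi_n).$$
This is well defined because parking functions are invariant under permutations of entries, and it is manifestly an involution. I would then verify that $\sigma$ preserves the statistics $\one$ and $\unl$, while intertwining $\nlel$ and $\lel$.

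First, $\one(\pi)$ depends only on the multiset of entries and hence is trivially invariant under $\sigma$. Second, as already observed in the proof of Proposition \ref{followup}, the unluckiness statistic is preserved: if $\pi_1=\pi_2$, then in either order car $1$ parks at the common preference and car $2$ is displaced, contributing exactly one unlucky car; if $\pi_1\neq\pi_2$, then in either order both cars park at their preferences, contributing none. In either case the set of spots occupied after cars $1$ and $2$ have parked is the same, so the protocol for cars $3,\ldots,n$ is identical before and after applying $\sigma$, and each of those cars contributes the same to $\unl$.

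Third, and this is the crux, I claim $\lel(\sigma(\pi))=\nlel(\pi)$. Indeed,
$$\lel(\sigma(\pi)) = \#\{i\colon \sigma(\pi)_i = \sigma(\pi)_1\} = \#\{i\colon \sigma(\pi)_i = \pi_2\},$$
and since $\{\sigma(\pi)_1,\sigma(\pi)_2\}=\{\pi_1,\pi_2\}$ as multisets while $\sigma(\pi)_i=\pi_i$ for $i\geq 3$, the number of occurrences of the value $\pi_2$ in $\sigma(\pi)$ equals the number of occurrences in $\pi$, which is $\nlel(\pi)$ by definition.

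Putting these three observations together, the substitution $\pi\mapsto\sigma(\pi)$ transforms $x^{\nlel(\pi)}y^{\one(\pi)}z^{\unl(\pi)}$ into $x^{\lel(\sigma(\pi))}y^{\one(\sigma(\pi))}z^{\unl(\sigma(\pi))}$; summing over $\PF(n)$ and using that $\sigma$ is a bijection gives
$$\sum_{\pi\in\PF(n)} x^{\nlel(\pi)}y^{\one(\pi)}z^{\unl(\pi)} \;=\; \sum_{\pi\in\PF(n)} x^{\lel(\pi)}y^{\one(\pi)}z^{\unl(\pi)},$$
and Theorem \ref{thm:all} supplies the desired closed form. There is no real obstacle here: the proof is essentially a single involution, and the only point requiring care is the case analysis verifying that swapping $\pi_1$ and $\pi_2$ preserves $\unl$.
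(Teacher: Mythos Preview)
Your proposal is correct and takes essentially the same approach as the paper: the paper's proof consists of the single sentence ``we switch the preferences of car $1$ and car $2$, which has no effect on the unluckiness or the $1$'s statistic,'' followed by an appeal to Theorem~\ref{thm:all}. Your write-up simply fleshes out the details of this involution argument.
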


\begin{proof}
As in the proof of Proposition \ref{followup}, we switch the
preferences of car $1$ and car $2$, which has no effect on the
unluckiness or the $1$'s statistic. The result readily follows from
Theorem \ref{thm:all}. 
\end{proof}

We now present a direct combinatorial argument for counting the number
of parking functions $\pi \in \PF(n)$ with $\pi_1=\pi_2$ (actually, in
the more general case $\pi_1=\cdots=\pi_k$ for some $k$). This will
shed light on the structure of parking functions and will be useful in
the proof of Theorem \ref{thm:differ}. Note that the parking
statistics in Lemma \ref{simple2} and Theorem \ref{thm:differ} 
involve only car preferences and not spots, so the statements could be
interpreted probabilistically as in Section \ref{sec:sym1}. 

\begin{lemma}\label{simple2}
Let $k\geq 1$. 
\begin{equation*}
\#\{\pi \in \PF(n)\colon \pi_1=\pi_2=\cdots=\pi_k\}=(n+1)^{n-k}.    
\end{equation*}
\end{lemma}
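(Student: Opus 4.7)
The plan is to apply Pollak's circle argument directly, exploiting the rotation invariance of the condition $\pi_1 = \pi_2 = \cdots = \pi_k$. Recall that Pollak's argument works inside the group $G = (\mathbb{Z}/(n+1)\mathbb{Z})^n$ with coset action by the cyclic subgroup $H = \langle (1,1,\ldots,1) \rangle$ of order $n+1$, and shows that every one of the $(n+1)^{n-1}$ cosets of $H$ in $G$ contains exactly one parking function.

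The key observation is that the property ``$\pi_1 = \pi_2 = \cdots = \pi_k$'' is preserved under the shift $(\pi_1,\ldots,\pi_n) \mapsto (\pi_1+1,\ldots,\pi_n+1) \pmod{n+1}$, since shifting all entries by the same constant preserves equalities between them. Therefore the set
\[
S := \{\pi \in [n+1]^n : \pi_1 = \pi_2 = \cdots = \pi_k\}
\]
is a union of $H$-cosets. First I would count $|S|$ directly: the common value $\pi_1 = \cdots = \pi_k$ can be chosen in $n+1$ ways, and each of $\pi_{k+1}, \ldots, \pi_n$ can independently be any element of $[n+1]$, giving $|S| = (n+1)^{n-k+1}$.

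Since $S$ is a union of $H$-cosets and each coset has size $n+1$, the number of cosets contained in $S$ is $(n+1)^{n-k+1}/(n+1) = (n+1)^{n-k}$. By Pollak's theorem, each such coset contributes exactly one parking function. To finish, I would observe that a parking function $\pi \in [n+1]^n$ actually takes values in $[n]$ (since spot $n+1$ can never be occupied when only $n$ cars park), and the equality condition $\pi_1 = \cdots = \pi_k$ is automatically preserved when we restrict from $[n+1]^n$ to $[n]^n$. Hence
\[
\#\{\pi \in \PF(n) : \pi_1 = \pi_2 = \cdots = \pi_k\} = (n+1)^{n-k},
\]
as claimed.

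There is no serious obstacle here; the only subtlety is verifying that the condition $\pi_1 = \cdots = \pi_k$ is genuinely $H$-invariant (which it is, being preserved under any constant shift), so that $S$ decomposes cleanly into cosets. The case $k=1$ recovers the classical count $(n+1)^{n-1}$, and the case $k=2$ agrees with Lemma \ref{simple}, providing immediate sanity checks.
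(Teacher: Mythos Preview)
Your proof is correct and is essentially the paper's own argument: both use Pollak's circular symmetry, counting $(n+1)\cdot(n+1)^{n-k}$ preference sequences on the circle with the first $k$ entries equal, then dividing by $n+1$ via the coset/rotation structure. One small slip in your closing remarks: the case $k=2$ does \emph{not} agree with Lemma~\ref{simple}, which concerns the condition $\pi_1=1$ (giving $2(n+1)^{n-2}$), not $\pi_1=\pi_2$; the $k=2$ case of the present lemma gives $(n+1)^{n-2}$, a different count for a different condition.
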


\begin{proof}
We apply circular symmetry. Add an additional space $n+1$ and arrange
the spaces in a circle. We select a common spot for the first $k$
cars, and it can be done in $n+1$ ways. Then for the remaining $n-k$
cars, there are $(n+1)^{n-k}$ possible preference sequences. A valid
parking function is produced when spot $n+1$ is left unoccupied after
circular rotation, and one $n+1$ scalar factor goes away. 
\end{proof}

\begin{theorem}\label{thm:differ}
\begin{align}\label{contrast}
&\sum_{\pi \in \PF(n)} x^{\lel(\pi)} y^{\nlel(\pi)} z^{\unl(\pi)} \notag \\
&=xy\left(n\prod_{i=1}^{n-2} (xz+yz+(i-1)z+n-i)+xyz\prod_{i=1}^{n-2} (xyz+iz+n-i)\right).
\end{align}
\end{theorem}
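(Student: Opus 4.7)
The plan is to adapt the proof of Theorem~\ref{thm:all} essentially verbatim, splitting $\PF(n)$ into the two subsets $\{\pi\colon \pi_1 \neq \pi_2\}$ and $\{\pi\colon \pi_1 = \pi_2\}$ and computing each contribution by Pollak's circle argument. I would assign the $n$ cars independently to the $n+1$ spots on a circle; each of the three statistics $\lel(\pi)$, $\nlel(\pi)$, and $\unl(\pi)$ is invariant under circular rotation (they depend only on the multiplicities of preferences equal to $\pi_1$ or $\pi_2$, and on whether each car's preferred spot is already occupied at the moment of its arrival), so summing the triple generating function over all $(n+1)^n$ circular assignments equals $n+1$ times the desired sum over $\PF(n)$.

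For Case~A, $\pi_1 \neq \pi_2$, cars $1$ and $2$ park at distinct free spots and are both lucky, jointly contributing a factor of $xy$. After they park, exactly $i=2$ spots on the circle are occupied, namely $\pi_1$ and $\pi_2$. As each subsequent car $k \in \{3,4,\dots,n\}$ enters with $i=k-1$ spots currently occupied, summing over its $n+1$ equally likely preferences decomposes into four outcomes: it prefers $\pi_1$ and is unlucky (weight $xz$), it prefers $\pi_2$ and is unlucky (weight $yz$), it prefers one of the $i-2$ other taken spots (weight $z$), or it prefers one of the $n+1-i$ free spots (weight $1$). Re-indexing $j=i-1$, this gives the product $\prod_{j=1}^{n-2}(xz+yz+(j-1)z+n-j)$. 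Multiplying by the $(n+1)n$ choices for the ordered pair $(\pi_1,\pi_2)$ with $\pi_1\neq\pi_2$, and then dividing by $n+1$ for rotation equivalence, produces the first summand $n\, xy \prod_{i=1}^{n-2}(xz+yz+(i-1)z+n-i)$.

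For Case~B, $\pi_1=\pi_2$, car $1$ is lucky but car $2$ is forced to be unlucky since its preferred spot is already occupied by car $1$; because $\pi_1=\pi_2$, both cars contribute simultaneously to $\lel$ and $\nlel$, so jointly they contribute $xy\cdot xyz$. Now as car $k\in\{3,\dots,n\}$ enters with $i=k-1$ spots occupied, of which exactly one is $\pi_1=\pi_2$, the per-car sum becomes $xyz+(i-1)z+(n+1-i)$, yielding the product $\prod_{j=1}^{n-2}(xyz+jz+n-j)$ after re-indexing $j=i-1$. Multiplying by the $n+1$ choices of $\pi_1$ (which determines $\pi_2$) and dividing by $n+1$ yields the second summand $xy\cdot xyz \prod_{i=1}^{n-2}(xyz+iz+n-i)$. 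Adding the two cases gives the claimed identity, and Lemma~\ref{simple2} with $k=2$ offers a consistency check: Case~B accounts for $(n+1)^{n-2}$ parking functions and Case~A for the remaining $n(n+1)^{n-2}$.

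The main subtlety is justifying why the per-car sums multiply cleanly, even though the unluckiness of each later car depends on the preferences of all earlier cars. The key observation is that, conditional on the current set of occupied spots, each new car's contribution to the triple $(\lel,\nlel,\unl)$ depends only on the count $i$ of occupied spots and on whether its preference coincides with $\pi_1$, with $\pi_2$, with one of the other taken spots, or with a free spot. Since preferences are independent and the sum over the $n+1$ preferences depends only on $i$, the generating function factors as a telescoping product, exactly as in the proof of Theorem~\ref{thm:all}.
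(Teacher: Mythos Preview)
Your proposal is correct and takes essentially the same approach as the paper's proof: both split into the cases $\pi_1\neq\pi_2$ and $\pi_1=\pi_2$, apply Pollak's circle argument, and factor the generating function as a product over the successive cars $3,\dots,n$ according to whether the new preference hits $\pi_1$, $\pi_2$, another occupied spot, or a free spot. The only cosmetic difference is that you sum directly over all $(n+1)^n$ circular assignments and divide by $n+1$, whereas the paper phrases the same computation in probability language and multiplies by the counts from Lemma~\ref{simple2}; these are equivalent.
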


\begin{proof}
We proceed as in the proof of Theorem \ref{thm:all}. We classify into two situations: $\pi_1 \neq \pi_2$ and $\pi_1=\pi_2$.

Case A: $\pi_1 \neq \pi_2$. As usual, we assign $n$ cars independently
on a circle of length $n+1$ and then apply circular rotation. Since
$\pi_1 \neq \pi_2$, there are two special spots on the circle, one
corresponding to $\pi_1$ and the other corresponding to $\pi_2$ (both
pre-rotation). These two special spots must correspond to lucky
cars. When a new car comes in and picks the special spot which will
rotate to spot $\pi_1$, it must be unlucky. Similarly, when the new
car picks the special spot which will rotate to spot $\pi_2$ upon
entering, it must also be unlucky. Lastly, if the new car picks any of
the already taken spots, it must be unlucky as well. Suppose $i$ spots
(including the two special spots corresponding to $\pi_1$ and $\pi_2$)
have been taken, where $2\leq i \leq n-1$, then with the entrance of
the new car, we have 
$$\PR(\text{new car picks spot } \pi_1)=\frac{1}{n+1},$$

$$\PR(\text{new car picks spot } \pi_2)=\frac{1}{n+1},$$

$$\PR(\text{new car picks an unavailable spot that is neither spot } 
\pi_1 \text{ nor spot } \pi_2)=\frac{i-2}{n+1},$$

$$\PR(\text{new car picks an available spot})=\frac{n-i+1}{n+1}.$$
From Lemma \ref{simple2}, the total number of parking functions $\pi \in \PF(n)$ where $\pi_1 \neq \pi_2$ is given by
$$(n+1)^{n-1}-(n+1)^{n-2}=n(n+1)^{n-2}.$$ This implies that
\begin{align*}
&\sum_{\pi \in \PF(n)\colon\pi_1 \neq \pi_2} x^{\lel(\pi)} y^{\nlel(\pi)} z^{\unl(\pi)} \notag \\
=&n(n+1)^{n-2} xy\prod_{i=2}^{n-1} \left(xz\frac{1}{n+1}+yz\frac{1}{n+1}+z\frac{i-2}{n+1}+\frac{n-i+1}{n+1}\right) \notag \\
=&nxy\prod_{i=1}^{n-2} (xz+yz+(i-1)z+n-i).
\end{align*}

Case B: $\pi_1=\pi_2$. Since $\pi_1=\pi_2$, $\pi_2$ parks in an
adjacent spot (clockwise) to $\pi_1$. Thus car $1$ is lucky and car
$2$ is unlucky. Let us turn our attention to the remaining $n-2$
cars. Suppose $i$ spots (including the two special spots corresponding
to $\pi_1$ and $\pi_1+1$) have been taken, where $2\leq i \leq n-1$,
then with the entrance of the new car, we have 
$$\PR(\text{new car picks spot } \pi_1)=\frac{1}{n+1},$$

$$\PR(\text{new car picks an unavailable spot that is not spot } \pi_1)=\frac{i-1}{n+1},$$

$$\PR(\text{new car picks an available spot})=\frac{n-i+1}{n+1}.$$
Again from the proof of Lemma \ref{simple2}, the total number of parking preferences for these remaining $n-2$ cars is given by $(n+1)^{n-2}$. This implies that
\begin{align*}
&\sum_{\pi \in \PF(n)\colon\pi_1=\pi_2} x^{\lel(\pi)} y^{\nlel(\pi)} z^{\unl(\pi)} \notag \\
=&(n+1)^{n-2}xy(xyz)\prod_{i=2}^{n-1} \left(xyz\frac{1}{n+1}+z\frac{i-1}{n+1}+\frac{n-i+1}{n+1}\right) \notag \\
=&xy(xyz)\prod_{i=1}^{n-2} (xyz+iz+n-i).
\end{align*}
\end{proof}

Setting $z=1$ in our ``contrast formula'' (\ref{contrast}), as with Proposition \ref{thm:master}, a direct combinatorial proof is straightforward.

\begin{proposition}\label{thm:master2}
We have for $s \neq t$,
\begin{equation*}
\#\{\pi \in \PF(n)\colon  \lel(\pi)=s+1 \text{ and } \nlel(\pi)=t+1\}=\binom{n-2}{s, t, n-s-t-2} n(n-1)^{n-s-t-2},    
\end{equation*}
and for $s=t$,
\begin{multline*}
\#\{\pi \in \PF(n)\colon  \lel(\pi)=s+1 \text{ and } \nlel(\pi)=s+1\}\\=\binom{n-2}{s, s, n-2s-2} n(n-1)^{n-2s-2}+\binom{n-2}{s-1} n^{n-s-1}.    
\end{multline*}
\end{proposition}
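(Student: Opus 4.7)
The plan is to use Pollak's circle argument directly on the preference vector, exploiting the fact that both $\lel(\pi)$ and $\nlel(\pi)$ measure only coincidences with $\pi_1$ and $\pi_2$ respectively, hence are invariant under cyclic translation of the preference vector by $(1,\dots,1)$ modulo $n+1$. Since every cyclic orbit in $[n+1]^n$ contains exactly one parking function in $[n]^n$, counting vectors $\pi \in [n+1]^n$ with $\lel(\pi)=s+1$ and $\nlel(\pi)=t+1$ and then dividing by $n+1$ gives the desired count.

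First I would split into Case A ($\pi_1 \neq \pi_2$) and Case B ($\pi_1 = \pi_2$), paralleling the proof of Proposition \ref{thm:master}. In Case A, I would choose disjoint subsets of cars $\{3,\dots,n\}$ of sizes $s$ and $t$ to share preferences with car $1$ and car $2$ (contributing the multinomial $\binom{n-2}{s,t,n-s-t-2}$), then pick $\pi_1 \in [n+1]$ in $n+1$ ways, $\pi_2 \in [n+1]\setminus\{\pi_1\}$ in $n$ ways, and fill the remaining $n-s-t-2$ cars freely from $[n+1]\setminus\{\pi_1,\pi_2\}$ in $(n-1)^{n-s-t-2}$ ways. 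After dividing by $n+1$, this yields
\[
  \binom{n-2}{s,t,n-s-t-2}\, n(n-1)^{n-s-t-2},
\]
which is valid for any $s,t \geq 0$ with $s+t \leq n-2$, including $s=t$.

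Next, in Case B ($\pi_1=\pi_2$), which forces $s=t$ automatically, I would pick the common value $\pi_1=\pi_2 \in [n+1]$ in $n+1$ ways, then choose which $s-1$ of the remaining cars $\{3,\dots,n\}$ also share this preference in $\binom{n-2}{s-1}$ ways, and let the other $n-s-1$ cars pick freely from $[n+1]\setminus\{\pi_1\}$ in $n^{n-s-1}$ ways. Dividing by $n+1$ contributes the extra term $\binom{n-2}{s-1}\, n^{n-s-1}$ present only when $s=t$, which combines additively with the Case A contribution to match the stated formula. Lemma \ref{simple2} serves as a sanity check: summing the Case B counts over all $s$ recovers $(n+1)^{n-2}$ for the total number of parking functions with $\pi_1=\pi_2$.

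The only real delicacy is confirming the cyclic invariance of $\lel$ and $\nlel$ under addition of $(1,\dots,1) \pmod{n+1}$, which is immediate since these statistics are defined purely by which entries of $\pi$ coincide with $\pi_1$ (respectively $\pi_2$), and shifting all coordinates by the same constant preserves every such coincidence. Once that is noted, the two cases combine cleanly and no further technicalities arise; in particular, whether Case A alone or Cases A and B together contribute is determined solely by whether $s \neq t$ or $s = t$, exactly matching the dichotomy in the proposition.
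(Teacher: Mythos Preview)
Your proposal is correct and follows essentially the same approach as the paper's proof: both apply Pollak's circle argument with the same case split on whether $\pi_1=\pi_2$, carry out the identical counting in each case (multinomial choice of which cars share preferences, then free choice of the distinguished values in $[n+1]$ and of the remaining entries, then divide by $n+1$), and arrive at the same formulas. Your explicit remark that $\lel$ and $\nlel$ are invariant under the cyclic shift by $(1,\dots,1)$ is exactly the point that makes the argument go through, and the paper uses it in the same way.
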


\begin{proof}
We apply Pollak's circle argument \cite{Pollak} and classify into two
situations: $\pi_1 \neq \pi_2$ and $\pi_1=\pi_2$. The argument is
analogous but easier than the proof of Proposition \ref{thm:master}. 

Case A: $\pi_1 \neq \pi_2$. Create an extra parking spot $n+1$ and let
the cars go around in a circle if necessary. We select two spots and
assign them respectively to car $1$ and car $2$, of which there are
$(n+1)n$ possibilities. Next we choose the parking preferences for the
other $n-2$ cars. Now $s$ of these cars have the same preference as car
$1$ and $t$ of these cars have the same preference as car $2$, and
this contributes a multinomial coefficient to the list of preferences
$\pi$. Lastly, the remaining $n-s-t-2$ cars could choose any of the
$n-1$ spots not preferred by car $1$ and car $2$, and there are
$(n-1)^{n-s-t-2}$ possible preference sequences. A valid parking
function is produced when spot $n+1$ is left unoccupied after circular
rotation, and one $n+1$ scalar factor goes away. Putting all this
together, 
\begin{equation*}
\#\{\pi \in \PF(n)\colon   \pi_1 \neq \pi_2, \hspace{.1cm} \lel(\pi)=s+1 \text{ and } \nlel(\pi)=t+1\}=\binom{n-2}{s, t, n-s-t-2} n(n-1)^{n-s-t-2}.
\end{equation*}

Case B: $\pi_1=\pi_2$. As in Case A, we assign a common spot on the
circle to car $1$ and car $2$, of which there are $n+1$
possibilities. Next we choose the parking preferences for the other
$n-2$ cars. Now $s-1$ of these cars have the same preference as car $1$,
and this contributes a multinomial (actually, binomial in this case)
coefficient to the list of preferences $\pi$. Lastly, the remaining
$n-s-1$ cars could choose any of the $n$ spots not preferred by car
$1$, and there are $n^{n-s-1}$ possible preference sequences. A valid
parking function is produced when spot $n+1$ is left unoccupied after
circular rotation, and one $n+1$ scalar factor goes away. Putting all
this together, 
\begin{equation*}
\#\{\pi \in \PF(n)\colon  \pi_1=\pi_2, \hspace{.1cm} \lel(\pi)=s+1 \text{ and
} \nlel(\pi)=s+1\}=\binom{n-2}{s-1} n^{n-s-1}. 
\end{equation*}

Finally, we note that the $s\neq t$ term could only come from Case A,
whereas the $s=t$ term could come from either Case A or Case B. 
\end{proof}

\old{\textcolor{red}{Another alternate proof.} Following our
  derivations, the number of parking functions where the 1's statistic
  is $k$ and the number of parking functions where the leading
  statistic is $k$ are both given by $\binom{n-1}{k-1} n^{n-k}$. A
  combinatorial proof for the former is well-known, and here we
  present a combinatorial proof for the latter. 

\begin{corollary}
\begin{equation*}
\sum_{\pi \in \PF(n)} \#\{\pi\colon  \one(\pi)=k\}=\binom{n-1}{k-1} n^{n-k}.    
\end{equation*}
\end{corollary}

\begin{proof}
We again apply circular symmetry. Add an additional space $n+1$ and
arrange the spaces in a circle. Choosing the $k-1$ cars out of the
trailing cars to have the same preference as the leading car
contributes a binomial coefficient. We first select a common spot for
these cars, and it can be done in $n+1$ ways. Then for the remaining
$n-k$ cars, there are $n^{n-k}$ possible preference sequences. A valid
parking function is produced when spot $n+1$ is left unoccupied after
circular rotation, and the $n+1$ scalar factor goes away. 
\end{proof}}

\section{Miscellaneous results}

As mentioned earlier, the techniques we have been using (recurrence,
circular symmetry, $\dots$) in this paper are not restricted to
classical parking functions. We have already seen some applicability
of our method to $(r, k)$-parking functions in Section
\ref{sec:sym1}. In this section we will further demonstrate the power
of our systematic approach in the study of parking functions through
some selected results. 

\subsection{Prime parking functions} A classical parking function
$\pi=(\pi_1, \dots, \pi_n)$ is said to be prime if for all $1 \leq j
\leq n-1$, at least $j+1$ cars want to park in the first $j$
places. (Equivalently, if we remove some term of $\pi$ equal to 1,
then we still have a parking function.) Denote the set of prime
parking functions of length $n$ by $\PPF(n)$. As with classical
parking functions, we could also study prime parking functions via
circular rotation \cite[pp.~141-142]{Stanley}. We note however that
the circular symmetry argument for prime parking functions arises
somewhat differently from the corresponding argument for classical
parking functions, and only the deterministic parking protocol works
for this prime parking model. 
\begin{theorem}\label{thm:PPF}
We have
\begin{equation}\label{ppf_ur}
\sum_{\pi \in \PPF(n)} x^{\unl(\pi)} y^{\rep(\pi)}=\prod_{i=1}^{n-1}
(xy+(i-1)x+n-1-i). 
\end{equation}
\end{theorem}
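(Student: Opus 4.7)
My plan is to adapt Pollak's circle argument for prime parking functions (cf.~Stanley \cite[pp.~141--142]{Stanley}), in direct analogy with the proof of Theorem~\ref{thm:general_k_r}. Consider $n$ cars on a circle of $n-1$ spots and a preference vector $\pi \in [n-1]^n$. Under the shift $\pi \mapsto \pi + (1,\ldots,1) \pmod{n-1}$ every orbit has size $n-1$, giving $(n-1)^{n-1}$ orbits; the Pollak-style argument shows that each orbit contains a unique representative that is a prime parking function of length $n$. This yields a bijection $\PPF(n) \leftrightarrow \{\text{orbits}\}$ and in particular recovers $|\PPF(n)| = (n-1)^{n-1}$.

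I would next verify that both $\unl$ and $\rep$ descend to functions on orbits. The repeat statistic is obviously shift-invariant. The unluckiness is preserved because on the circle the parking dynamics depend only on the relative preferences, and for a PPF representative on the linear street of length $n$, the PPF condition $b_i\leq i-1$ for $i\geq 2$ forces spot $\pi_n$ to already be occupied when the $n$th car arrives, matching the circular picture in which all $n-1$ spots are full by that moment. Hence for PPFs the linear-street value of $\unl$ coincides with its circular value.

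By circular symmetry, the pair $(\unl_j,\rep_j)$ contributed by car $j$ has the same conditional distribution regardless of the earlier preferences. For $j\geq 2$, using that $\pi_{j-1}$ is always among the currently occupied spots,
\[
\PR(\pi_j=\pi_{j-1})=\tfrac{1}{n-1},\quad \PR(\pi_j\in\mathrm{occupied}\setminus\{\pi_{j-1}\})=\tfrac{j-2}{n-1},\quad \PR(\pi_j\text{ free})=\tfrac{n-j}{n-1},
\]
so car $j$ contributes the independent factor $\tfrac{1}{n-1}\bigl(xy+(j-2)x+(n-j)\bigr)$ to the generating function (for $j=n$ the last term simply vanishes). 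Multiplying, summing over $\pi\in[n-1]^n$, and dividing by the orbit size $n-1$, I obtain
\[
\sum_{\pi\in\PPF(n)} x^{\unl(\pi)}y^{\rep(\pi)}=\prod_{j=2}^{n}\bigl(xy+(j-2)x+(n-j)\bigr)=\prod_{i=1}^{n-1}\bigl(xy+(i-1)x+(n-1-i)\bigr).
\]

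The main obstacle is the bijection in the first step: one must check that the PPF condition $b_i\leq i-1$ for $i\geq 2$ singles out a unique cyclic shift in each orbit of $[n-1]^n$, playing the role for prime parking functions that ``spot $n+1$ is empty'' plays for classical parking functions in Pollak's original argument. Once that is granted, the remainder is the same kind of routine product of independent circular factors that drives the proof of Theorem~\ref{thm:general_k_r}.
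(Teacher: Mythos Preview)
Your proposal is correct and follows essentially the same approach as the paper: both invoke the Kalikow--Pollak circle argument for prime parking functions on a cycle of length $n-1$, note that $\unl$ and $\rep$ are invariant under the cyclic shift, and then compute the independent per-car factor $\frac{1}{n-1}(xy+(j-2)x+(n-j))$ to obtain the product. You are slightly more explicit than the paper in flagging that one must check the linear-street and circular values of $\unl$ agree for the PPF representative (the paper simply asserts rotation-invariance), though your justification only treats car $n$; the full check---that for a PPF no car $j<n$ ever reaches spot $n$ on the line, hence no wraparound occurs on the circle---follows readily from the inequality $|\{i:\pi_i\geq s+1\}|\leq n-s-1$.
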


\begin{proof}
The proof is similar to the proof of Theorem \ref{thm:general_k_r}. We
utilize the circular symmetry argument of Kalikow \cite{Stanley} and
interpret in terms of probability. 
\begin{enumerate}
\item Pick an element $\pi \in (\mathbb{Z}/(n-1)\mathbb{Z})^n$, where the equivalence class representatives are taken in $1, \dots, n-1$.

\item For $i \in \{0, \dots, n-2\}$, record $i$ if $\pi+i(1, \dots,
  1)$ (modulo $n-1$) is a prime parking function, where $(1, \dots,
  1)$ is a vector of length $n$. There is exactly one such $i$, which
  we denote by $i_0$. Then $\pi+i_0(1, \dots, 1)$ is a prime parking
  function of length $n$ taken uniformly at random. 
\end{enumerate}

The main takeaway from this procedure is that a random prime parking
function could be generated by assigning $n$ cars independently on a
circle of length $n-1$ and then applying circular rotation. As in the
proof of Theorem \ref{thm:general_k_r}, the generating function
factors completely. 

The first car is always lucky. For $1\leq i\leq n-1$, note that if the
$(i+1)$th car prefers any of the $i$ spots that are taken by the
previous $i$ cars, then it must be unlucky. In particular, if the
$(i+1)$th car repeats the preference of the $i$th car, then it must be
unlucky. The last car is always unlucky. We have 
$$\PR((i+1)\text{th car repeats the preference of } i\text{th car})=\frac{1}{n-1},$$

  $$\PR((i+1)\text{th car is unlucky but does not repeat the preference
  of } i\text{th car})=\frac{i-1}{n-1},$$ 

$$\PR((i+1)\text{th car is lucky})=\frac{n-1-i}{n-1}.$$

Since the total number of prime parking functions of length $n$ is
$(n-1)^{n-1}$, this implies that 
\begin{align*}
\sum_{\pi \in \PPF(n)} x^{\unl(\pi)} y^{\rep(\pi)}=&(n-1)^{n-1}
\prod_{i=1}^{n-1}
\left(xy\frac{1}{n-1}+x\frac{i-1}{n-1}+\frac{n-1-i}{n-1}\right) \notag
\\ 
=&\prod_{i=1}^{n-1} (xy+(i-1)x+n-1-i).
\end{align*}
\end{proof}

\begin{corollary}
Taking $y=1$ in (\ref{ppf_ur}), we have
\begin{equation*}
\sum_{\pi \in \PPF(n)} x^{\unl(\pi)}=\prod_{i=1}^{n-1}
(ix+(n-1-i)). 
\end{equation*}
This agrees with the corresponding formula in Gessel and Seo \cite[Corollary 10.3]{GS}.
\end{corollary}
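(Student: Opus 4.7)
The plan is to obtain this corollary as a direct specialization of Theorem \ref{thm:PPF}. Setting $y=1$ in the identity
\[
\sum_{\pi \in \PPF(n)} x^{\unl(\pi)} y^{\rep(\pi)} = \prod_{i=1}^{n-1} \bigl(xy+(i-1)x+n-1-i\bigr)
\]
collapses the $\rep$-marking on the left-hand side to a constant $1$, so the sum reduces to $\sum_{\pi \in \PPF(n)} x^{\unl(\pi)}$. On the right-hand side, the $i$th factor becomes
\[
x + (i-1)x + (n-1-i) = ix + (n-1-i),
\]
which is exactly the claimed product. So the entire proof is a one-line substitution plus an algebraic simplification of a linear combination.

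The only conceptual content is the sanity check that the two statistics really do decouple in the way the substitution presumes; this is automatic because $y$ only records $\rep$, and specializing $y=1$ is legitimate for any polynomial identity in $x,y$. I would therefore present the proof as a single displayed computation: rewrite the $i=1,\dots,n-1$ factor after substitution and observe equality with $\prod_{i=1}^{n-1}(ix+(n-1-i))$. I would then remark that this recovers the formula of Gessel and Seo \cite[Corollary 10.3]{GS}, as stated.

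There is no real obstacle here: the work was done in Theorem \ref{thm:PPF} via the Kalikow circular-symmetry argument, and the corollary is just the $y=1$ shadow of that theorem. The only thing to be careful about is indexing — making sure the product still runs from $i=1$ to $n-1$ after substitution, which it does since no factor vanishes or merges under $y=1$.
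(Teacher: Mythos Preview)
Your proposal is correct and matches the paper's approach exactly: the corollary is stated without a separate proof, simply as the specialization $y=1$ of equation~(\ref{ppf_ur}), and your one-line substitution $x+(i-1)x+(n-1-i)=ix+(n-1-i)$ is precisely what is intended.
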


\begin{corollary}
Taking $x=1$ in (\ref{ppf_ur}), we have
\begin{equation*}
\sum_{\pi \in \PPF(n)} y^{\rep(\pi)}=(y+n-2)^{n-1}.
\end{equation*}
This agrees with the corresponding formula in Kalikow \cite[Section 1.7.1]{Kalikow}.
\end{corollary}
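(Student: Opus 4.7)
The plan is to derive this directly from Theorem~\ref{thm:PPF} by specialization. Having already established
\[
\sum_{\pi \in \PPF(n)} x^{\unl(\pi)} y^{\rep(\pi)}=\prod_{i=1}^{n-1} (xy+(i-1)x+n-1-i),
\]
I would simply set $x=1$ and track what happens to each factor of the product.

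The key observation is that the $i$-dependence in the factor $xy+(i-1)x+n-1-i$ comes from the two summands $(i-1)x$ and $n-1-i$. When $x=1$ these become $(i-1)$ and $n-1-i$, whose sum is the $i$-independent constant $n-2$. Therefore every one of the $n-1$ factors collapses to the same value $y + n - 2$, and the product becomes $(y+n-2)^{n-1}$, which is exactly the claimed identity.

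There really is no main obstacle here: the whole content of this corollary is absorbed by Theorem~\ref{thm:PPF}, and the verification is a one-line algebraic simplification. The only thing worth flagging is a brief comparison with the literature, namely that this matches Kalikow's formula, which one can cite once the computation is recorded.
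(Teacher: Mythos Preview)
Your proposal is correct and matches the paper's approach exactly: the corollary is stated as an immediate specialization of Theorem~\ref{thm:PPF} at $x=1$, and your factor-by-factor simplification $(y+(i-1)+n-1-i)=y+n-2$ is precisely the intended one-line verification.
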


\begin{theorem}\label{thm:PPF-2}
\begin{equation*}
\sum_{\pi \in \PPF(n)} x^{\unl(\pi)} y^{\lel(\pi)}=y\prod_{i=1}^{n-1}
(xy+(i-1)x+n-1-i). 
\end{equation*}
\end{theorem}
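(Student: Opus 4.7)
The proof should parallel that of Theorem~\ref{thm:PPF}, with the statistic $\rep$ replaced by $\lel$. I would apply Kalikow's circular symmetry setup used there: generate a random prime parking function by sampling $\pi \in (\mathbb{Z}/(n-1)\mathbb{Z})^n$ uniformly and applying the unique cyclic shift $\pi \mapsto \pi + i_0(1,\dots,1) \pmod{n-1}$ that produces a prime parking function. Both $\unl(\pi)$ and $\lel(\pi)$ are invariant under such shifts (the latter because $\pi_j=\pi_1$ iff $\pi_j+c=\pi_1+c$ modulo $n-1$), so the same factorization argument for the joint generating function applies.

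The first car is always lucky and is automatically counted as a leading element by the definition of $\lel$; this contributes the overall factor of $y$ that appears in front of the product. For each subsequent car indexed by $i+1$ with $1\le i\le n-1$, I would replace the ``matches car $i$'' analysis of the $\rep$ case by the ``matches car $1$'' analysis, obtaining
\[
\PR(\text{car } i{+}1 \text{ shares preference with car } 1)=\frac{1}{n-1},
\]
\[
\PR(\text{car } i{+}1 \text{ is unlucky but does not share preference with car } 1)=\frac{i-1}{n-1},
\]
\[
\PR(\text{car } i{+}1 \text{ is lucky})=\frac{n-1-i}{n-1}.
\]
The first case contributes a factor of $xy$ (leading element and unlucky, since car $1$'s spot is already occupied), the second contributes $x$, and the third contributes $1$. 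Multiplying the per-car factors and using $|\PPF(n)|=(n-1)^{n-1}$ then yields
\[
y\cdot(n-1)^{n-1}\prod_{i=1}^{n-1}\frac{xy+(i-1)x+(n-1-i)}{n-1} = y\prod_{i=1}^{n-1}(xy+(i-1)x+n-1-i),
\]
which is the desired identity.

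The main (and only real) obstacle is the same one dispatched in Theorem~\ref{thm:PPF}: justifying that the generating function factors car by car under the circular model. The argument is structurally identical, because ``matches car $1$'' and ``matches car $i$'' are both events of probability $1/(n-1)$ when the $(i+1)$th car's preference is uniform on $\mathbb{Z}/(n-1)\mathbb{Z}$, and the circular symmetry does not prefer one distinguished earlier car over another. Since the split into the three categories (``matches the distinguished car'', ``unlucky but does not match'', ``lucky'') is exactly parallel, with the middle and last categories tracked exactly as in the proof of Theorem~\ref{thm:PPF}, no new technical difficulty arises; the only novelty is the extra $y$ factor recording that car $1$ is itself a leading element.
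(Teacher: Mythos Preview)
Your proposal is correct and follows essentially the same approach as the paper. The paper's own proof is a two-sentence remark that the argument is ``almost identical'' to that of Theorem~\ref{thm:PPF}, with the sole observation that whenever the $(i+1)$th car shares the preference of car~$1$ it must be unlucky; your write-up simply spells out this parallel in full detail, including the extra overall factor of $y$ coming from car~$1$ itself.
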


\begin{proof}
The proof is almost identical to the proof of Theorem
\ref{thm:PPF}. We note that for $1\leq i\leq n-1$, if the desired spot
of the $(i+1)$th car coincides with that of the first car, then it
must be unlucky. 
\end{proof}

\begin{remark}\label{rk:PPF-2}
As in classical parking functions, prime parking functions are
invariant under the action of the symmetric group $\Sym_n$ permuting
the $n$ cars. Thus for $s\geq 1$, 
\begin{equation*}
\sum_{\pi \in \PPF(n)} y^{\#\{i\colon  \pi_i=\pi_s\}}=y(y+n-2)^{n-1}.
\end{equation*}
\end{remark}

Some asymptotic analysis of the above parking statistics readily follows.

\begin{proposition}\label{Poisson-CLT-PPF}
Take $n$ large and $1\leq s\leq n$ any integer. Consider parking
preference $\pi$ chosen uniformly at random. Let $U(\pi)$ be the
number of unlucky cars, $R(\pi)$ be the number of repeats in $\pi$
reading from left to right, and $L_s(\pi)$ be the number of cars with
the same preference as car $s$. Then for fixed $j=0, 1, \dots$, 
\begin{equation*}
\PR\left(R(\pi)=j \hspace{.1cm} \vert \hspace{.1cm} \pi \in \PPF(n) \right) \sim \frac{1}{ej!},
\end{equation*}
\begin{equation*}
\PR\left(L_s(\pi)=1+j \hspace{.1cm} \vert \hspace{.1cm} \pi \in \PPF(n) \right) \sim \frac{1}{ej!},
\end{equation*}
and for fixed $x$, $-\infty<x<\infty$,
\begin{equation*}
\PR\left(\frac{U(\pi)-\frac{n}{2}}{\sqrt{\frac{n}{6}}}\leq x \hspace{.1cm} \vert \hspace{.1cm} \pi \in \PPF(n)\right) \sim \Phi(x),
\end{equation*}
where $\Phi(x)$ is the standard normal distribution function.
\end{proposition}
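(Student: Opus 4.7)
The plan is to follow the template of Proposition \ref{Poisson-CLT}, substituting Theorem \ref{thm:PPF} and Remark \ref{rk:PPF-2} for Theorem \ref{thm:general_k_r}. Since the generating function
$$\sum_{\pi \in \PPF(n)} x^{\unl(\pi)} y^{\rep(\pi)} = \prod_{i=1}^{n-1}(xy + (i-1)x + n-1-i)$$
factors completely and $|\PPF(n)| = (n-1)^{n-1}$, dividing through shows that under the uniform measure on $\PPF(n)$ the joint probability generating function of $(U(\pi),R(\pi))$ likewise factors. I would read off from this that $(U(\pi),R(\pi)) = \sum_{i=1}^{n-1}(U_i,R_i)$, where the pairs $(U_i,R_i)$ are independent across $i$ with
$$\PR(R_i = 1) = \frac{1}{n-1}, \qquad \PR(U_i = 1) = \frac{i}{n-1}.$$
Likewise, Remark \ref{rk:PPF-2} yields (after dividing by $(n-1)^{n-1}$) the pgf $y\bigl((y+n-2)/(n-1)\bigr)^{n-1}$ for $L_s(\pi)$, showing that $L_s(\pi)-1$ is a sum of $n-1$ i.i.d.\ $\mathrm{Bernoulli}(1/(n-1))$ random variables.

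Given these decompositions, the three asymptotic statements reduce to standard limit theorems. For $R(\pi)$ and for $L_s(\pi)-1$, each is a sum of $n-1$ independent $\mathrm{Bernoulli}(1/(n-1))$ variables with expected value $1$; since the per-trial probability $1/(n-1) \to 0$, the Poisson limit theorem gives convergence in distribution to $\mathrm{Poisson}(1)$, yielding $e^{-1}/j! = 1/(ej!)$ as claimed. For $U(\pi)$ I would compute
$$\ER(U(\pi)) = \sum_{i=1}^{n-1}\frac{i}{n-1} = \frac{n}{2}, \qquad \Var(U(\pi)) = \sum_{i=1}^{n-1}\frac{i}{n-1}\Bigl(1-\frac{i}{n-1}\Bigr) \sim \frac{n}{6},$$
which matches the centering and scaling in the claim, and then invoke a central limit theorem for independent but not identically distributed Bernoulli summands to conclude convergence to $\Phi$ after standardization.

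The only real subtlety is justifying the CLT for $U(\pi)$, because the Bernoulli parameters $i/(n-1)$ are non-identical and range all the way from $1/(n-1)$ to $1$. However, since the summands are uniformly bounded by $1$ while the variance grows linearly in $n$, the Lindeberg--Feller (or Lyapunov) condition holds automatically, so this step reduces to bookkeeping. The rest of the argument mirrors the proof of Proposition \ref{Poisson-CLT} with the obvious parameter substitutions.
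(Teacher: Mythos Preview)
Your proposal is correct and follows essentially the same approach as the paper: decompose $(U,R)$ via the factored generating function from Theorem \ref{thm:PPF} into a sum of independent Bernoulli pairs, compute the relevant means and variances, and invoke the Poisson and normal approximations, with $L_s$ handled analogously via Remark \ref{rk:PPF-2}. If anything, your write-up is slightly more explicit than the paper's (you name the Lindeberg--Feller condition, which the paper leaves implicit).
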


\begin{proof}
We proceed as in the proof of Proposition \ref{Poisson-CLT}. From the
proof of Theorem \ref{thm:PPF}, the probability generating function of
$\boldsymbol{X}:=(U(\pi), R(\pi))$ may be decomposed into
$\boldsymbol{X}=\sum_{i=1}^{n-1}\boldsymbol{X}_i$, with
$\boldsymbol{X}_i=(U_i(\pi), R_i(\pi))$ independent, $U_i$ and $R_i$ both
Bernoulli, and 
\begin{equation*}
\ER(U_i)=\frac{i}{n-1}, \hspace{1cm} \ER(R_i)=\frac{1}{n-1},
\end{equation*}
\begin{equation*}
\Var(U_i)=\frac{i}{n-1}\left(1-\frac{i}{n-1}\right), \hspace{1cm}
\Var(R_i)=\frac{1}{n-1}\left(1-\frac{1}{n-1}\right). 
\end{equation*}
We compute
\begin{equation*}
\sum_{i=1}^{n-1} \ER(U_i) \sim \frac{n}{2}, \hspace{1cm}
\sum_{i=1}^{n-1} \ER(R_i) \sim 1, \hspace{1cm} 
\sum_{i=1}^{n-1} \Var(U_i) \sim \frac{n}{6}, \hspace{1cm}
\sum_{i=1}^{m-1} \Var(R_i) \sim 1. 
\end{equation*}
We recognize that $R(\pi)$ may be approximated by Poisson$(1)$ while
$U(\pi)$ may be approximated by normal$(0, 1)$ after
standardization. From Theorem \ref{thm:PPF-2} and Remark
\ref{rk:PPF-2}, the distribution of $L_s(\pi)$ similarly follows. 
\end{proof}

Even though results concerning parking statistics for classical
parking functions and prime parking functions seem largely parallel
till now, the next theorem shows that unlike classical parking
functions, the distributions of the leading elements statistic and the
$1$'s statistic are not the same for prime parking functions. 

\begin{theorem}
\begin{multline}\label{ppf_leading_1}
\sum_{\pi \in \PPF(n+1)} x^{\lel(\pi)} y^{\one(\pi)}\\
=x(n-1+x+y)^{n-1}\left((n-1)y-x-(n-1)\right)+x(n-1+x)^n+x^2y^2(n+xy)^{n-1}.
\end{multline}
\end{theorem}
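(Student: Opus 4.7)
The plan is to combine the Kalikow-type circular rotation for prime parking functions (as used in the proof of Theorem~\ref{thm:PPF}) with the case-analysis template of Theorem~\ref{thm:all}. Assign $n+1$ cars independently to a circle of length $n$ and consider the unique rotation (guaranteed by the cycle lemma) that produces a prime parking function. Let $p$ denote car~$1$'s circle position and $c$ the circle position that becomes spot~$1$ after rotation. I will split the sum according to whether $\pi_1 \neq 1$ (Case~A: $p \neq c$) or $\pi_1 = 1$ (Case~B: $p = c$); the former will give the first two terms and the latter the third term of the stated formula.

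For Case~B, since $p=c$ we have $\lel(\pi) = \one(\pi) = m_p$, so the weight collapses to $(xy)^{m_p}$. With $m_p = k \ge 2$, choose $k-1$ of the remaining $n$ cars to share preference~$1$ with car~$1$; the other $n-k+1$ cars take preferences in $\{2,\ldots,n\}$ and, under the shift $w \mapsto w-1$, must form an $(r,k')=(1,k-1)$-parking function of length $n-k+1$. By the formula $k'(k'+mr)^{m-1}$ cited in the proof of Theorem~\ref{thm:general_k_r}, the count is $(k-1)n^{n-k}$, so that
\[
\sum_{k \ge 2}\binom{n}{k-1}(k-1)n^{n-k}(xy)^k = x^2 y^2 (n+xy)^{n-1}
\]
by the binomial theorem, matching the third term.

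For Case~A, mimicking Theorem~\ref{thm:all}'s Case~A proof, I enter cars in a chosen order---car~$1$ at~$p$, then the first car landing at~$c$, then the remaining cars---and track each subsequent car's contribution: $x$ for landing at~$p$, $y$ for landing at~$c$, and $1$ for any of the other $n-2$ positions. The new ingredient relative to the classical setting is the prime condition $m_c \ge 2$, which necessitates a further sub-split within Case~A (according to whether a ``second car at~$c$'' is preselected among the special entrants or emerges among the generic choices); this is what produces the two-term form $x(n-1+x+y)^{n-1}((n-1)y-x-(n-1)) + x(n-1+x)^n$ rather than a single product. The total count of Case~A configurations is $n^n-(n+1)^{n-1}$, matching the evaluation of this two-term expression at $x=y=1$. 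Adding Cases~A and~B yields the stated identity.

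The main obstacle is Case~A: unlike Pollak's unique empty spot in the classical circle argument, the cycle-lemma position is pinned down by a partial-sum (Raney) condition, and the requirement $m_c \ge 2$ is not automatically enforced when one tracks cars sequentially. Unwinding this constraint is exactly what creates the correction summand $x(n-1+x)^n$, and verifying that the algebraic sum of the two Case~A sub-contributions produces the stated closed form is the main technical step.
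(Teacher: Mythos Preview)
Your Case~B argument is correct and clean: the non-$1$ entries, shifted down by~$1$, do form a $(1,k-1)$-parking function of length $n+1-k$, and the binomial sum collapses to $x^2y^2(n+xy)^{n-1}$ as claimed.

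Case~A, however, is where the proof actually lives, and you have not carried it out. The obstacle you name is real and not merely technical. In Theorem~\ref{thm:all} the rotation is pinned by the unique \emph{empty} spot on a circle of length $n+1$, so as cars enter one by one the set of candidate ``spot~$1$'' positions only shrinks and the conditional contributions factor. On the prime circle of length~$n$ with $n+1$ cars there is no empty spot; the cycle-lemma position~$c$ is a global function of the full multiplicity profile $(m_1,\dots,m_n)$ and can move as later cars are placed. Consequently there is no intermediate stage at which ``the new car lands at~$c$'' is a well-defined event with probability $1/n$, and the per-car factor your scheme would produce, namely $(n-2+x+y)$, does not match the $(n-1+x+y)$ that actually appears in the target expression. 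The ``sub-split'' you allude to is not specified, and I do not see how a sequential circle argument yields the two-term Case~A form without importing the very counts you are trying to establish.

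The paper avoids this difficulty by a different mechanism: it applies Kalikow's bijection
\[
\{(p,i):p\in\PPF(n+1),\ p_i=1\}\ \longleftrightarrow\ \{(q,i):q\in\PF(n),\ i\in[n+1]\}
\]
to pull the entire sum back to classical $\PF(n)$, where the joint $(\lel,\one)$-counts are already available explicitly from Proposition~\ref{thm:master}. Your Case~A corresponds precisely to the piece ``insert at position $i\ge 2$ into $q$ with $q_1\ne 1$,'' and the two-term closed form then drops out of routine generating-function manipulation of $\binom{n-2}{s,t,n-s-t-2}(n-1)^{n-s-t-1}$---not from a further case split on the prime circle. If you wish to retain your direct Case~B computation you may, but for Case~A you should switch to the Kalikow reduction.
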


\begin{proof}
In Kalikow \cite[Section 1.7.2]{Kalikow}, a bijection between $(p, i)$
where $p \in \PPF(n+1)$ and $p(i)=1$ and $(q, i)$ where $q \in \PF(n)$
and $i\in \{1, \dots, n+1\}$ was identified. The bijection is easy to
describe: we insert an extra $1$ anywhere into a parking function $q$
to obtain a prime parking function $p$, and then divide by the number
of $1$'s in the prime parking function $p$ so that we do not
overcount. 

Borrowing notation from Proposition \ref{thm:master}, this implies that
\begin{align*}
&\sum_{\pi \in \PPF(n+1)} x^{\lel(\pi)} y^{\one(\pi)} \notag \\
=&\sum_{s=0}^{n-2} \sum_{t=0}^{n-s-2} \frac{n}{s+2} \#\{\pi \in
\PF(n)\colon   \pi_1 \neq 1, \hspace{.1cm} \lel(\pi)=t+1 \text{ and }
\one(\pi)=s+1\} x^{t+1} y^{s+2} \notag \\ 
+&\sum_{s=0}^{n-2} \sum_{t=0}^{n-s-2} \frac{1}{s+2} \#\{\pi \in
\PF(n)\colon   \pi_1 \neq 1, \hspace{.1cm} \lel(\pi)=t+1 \text{ and }
\one(\pi)=s+1\} x^{s+2} y^{s+2} \notag \\ 
+&\sum_{s=0}^{n-1} \frac{n+1}{s+2}  \#\{\pi \in \PF(n)\colon   \pi_1=1, \hspace{.1cm}  \lel(\pi)=s+1 \text{ and } \one(\pi)=s+1\} x^{s+2} y^{s+2}.
\end{align*}
Here for $\pi \in \PF(n)$ with $\pi_1 \neq 1$, we classify into two
cases as inserting the extra $1$ in front of all elements in $\pi$
will change the leading element while inserting the extra $1$ anywhere
else will keep the leading element. 

We perform generating function calculations for the summands on the
right. From the proof of Proposition \ref{thm:master}, 
\begin{align*}
&\sum_{s=0}^{n-2} \sum_{t=0}^{n-s-2} \frac{n}{s+2} \binom{n-2}{s, t, n-s-t-2} (n-1)^{n-s-t-1} x^{t+1} y^{s+2} \notag \\
=&\sum_{s=0}^{n-2} \sum_{t=0}^{n-s-2} \frac{1}{n+1}\left((s+2)(t+1)-(t+1)\right) \binom{n+1}{s+2, t+1, n-s-t-2} (n-1)^{n-s-t-2} x^{t+1} y^{s+2} \notag \\
%=&\frac{1}{n+1} \left(xy\frac{\partial^2}{\partial x\partial y}(n-1+x+y)^{n+1}-x\frac{\partial}{\partial x}(n-1+x+y)^{n+1}\right. \notag \\
%& \hspace{6cm} +\left.\sum_{t=0}^n (t+1) \binom{n+1}{t+1} (n-1)^{n-t} x^{t+1}\right) \notag \\
=&\frac{1}{n+1} \left(xy\frac{\partial^2}{\partial x\partial y}(n-1+x+y)^{n+1}-x\frac{\partial}{\partial x}(n-1+x+y)^{n+1}+x\frac{d}{dx}(n-1+x)^{n+1}\right) \notag \\
=&x(n-1+x+y)^{n-1}\left((n-1)y-x-(n-1)\right)+x(n-1+x)^n.
\end{align*}

\begin{align*}
&\sum_{s=0}^{n-2} \sum_{t=0}^{n-s-2} \frac{1}{s+2} \binom{n-2}{s, t, n-s-t-2} (n-1)^{n-s-t-1} (xy)^{s+2} \notag \\
=&\sum_{s=0}^{n-2} \frac{n-1}{s+2} \binom{n-2}{s} (xy)^{s+2} \sum_{t=0}^{n-s-2} \binom{n-s-2}{n-s-t-2} (n-1)^{n-s-t-2} \notag \\
=&\sum_{s=0}^{n-2} \frac{n-1}{s+2} \binom{n-2}{s} (xy)^{s+2} n^{n-s-2} \notag \\
=&\sum_{s=0}^{n-2} \frac{(s+2)-1}{n} \binom{n}{s+2} (xy)^{s+2} n^{n-s-2} \notag \\
=&\frac{1}{n} \left(xy\frac{d}{d(xy)} (n+xy)^n-(n+xy)^n\right)+n^{n-1} \notag \\
=&\frac{1}{n}(n+xy)^{n-1}((n-1)xy-n)+n^{n-1}.
\end{align*}

\begin{align*}
&\sum_{s=0}^{n-1} \frac{n+1}{s+2} \binom{n-1}{s} (s+1) n^{n-s-2} (xy)^{s+2} \notag \\
=&\sum_{s=0}^{n-1} \frac{(s+2)(s+1)-(s+2)+1}{n^2} \binom{n+1}{s+2} n^{n-s-1} (xy)^{s+2} \notag \\
=&\frac{1}{n^2} \left(x^2y^2 \frac{d^2}{d^2(xy)} (n+xy)^{n+1}-xy \frac{d}{d(xy)} (n+xy)^{n+1}+(n+xy)^{n+1}\right)-n^{n-1} \notag \\
=&\frac{1}{n} (n+xy)^{n-1} (nx^2y^2-(n-1)xy+n)-n^{n-1}.
\end{align*}
Putting all this together, the desired result is obtained.
\end{proof}

\begin{corollary}
Taking $x=1$ in (\ref{ppf_leading_1}), we have
\begin{equation*}
\sum_{\pi \in \PPF(n+1)} y^{\one(\pi)}=(n+y)^{n}(y-1)+n^n.
\end{equation*}
This agrees with the corresponding formula in Kalikow \cite[Section
  1.7.2]{Kalikow}. 
\end{corollary}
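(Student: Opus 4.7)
The plan is to specialize the bivariate formula (\ref{ppf_leading_1}) at $x=1$ and simplify. Plugging $x=1$ into the three summands on the right-hand side gives
\begin{equation*}
(n+y)^{n-1}\bigl((n-1)y-n\bigr) \;+\; n^n \;+\; y^2(n+y)^{n-1},
\end{equation*}
and grouping the first and third summands produces $(n+y)^{n-1}\bigl(y^2+(n-1)y-n\bigr) + n^n$.

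The single algebraic observation needed is the factorization
\begin{equation*}
y^2 + (n-1)y - n = (y-1)(y+n),
\end{equation*}
which one can verify directly by noting that $y=1$ is a root (substituting yields $1+(n-1)-n=0$) and then reading off the other root, or by expanding the right-hand side. Combining this factorization with the $(n+y)^{n-1}$ prefactor gives $(n+y)^n(y-1)$, so the full sum collapses to $(n+y)^n(y-1) + n^n$, which is the claimed identity.

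There is essentially no obstacle here; the result is a direct specialization of the preceding theorem, and the only slightly non-routine step is spotting the quadratic factorization (an algebraic reflection of the fact that inserting or removing a ``1'' underlies the Kalikow bijection that produces this formula). As two quick sanity checks: setting $y=1$ yields $n^n$, agreeing with the known cardinality $|\PPF(n+1)|=n^n$; and setting $y=0$ yields $-n^n+n^n=0$, consistent with the fact that every prime parking function must have at least one car preferring spot $1$, so $\one(\pi)\ge 1$ for all $\pi\in\PPF(n+1)$.
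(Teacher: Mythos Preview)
Your proof is correct and is exactly the intended derivation: the paper states this corollary without proof, as an immediate consequence of substituting $x=1$ into (\ref{ppf_leading_1}), and your computation carries out that substitution and simplification cleanly. The factorization $y^2+(n-1)y-n=(y-1)(y+n)$ is the only nontrivial step, and your sanity checks at $y=1$ and $y=0$ are apt.
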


\subsection{Unit interval parking functions} Unit interval parking
functions are a subset of classical parking functions where each car
is displaced by at most $1$ spot \cite{CHJRSS}. Here we consider the deterministic
model where a car moves forward when its desired spot is
occupied. Denote the set of unit interval parking functions of length
$n$ by $\UPF(n)$. If a car is displaced by $1$ after parking then it
is unlucky; otherwise it is lucky. Let $P_0(y)=1$
and
  $$P_n(y)=\sum_{\pi \in \UPF(n)} y^{\#\{i\colon  \text{ displacement of
    car $i$ is $1$}\}}.$$ 

We recognize that $P_n(y)=\sum_{k=1}^n S(n, k)k! y^{n-k},$
where $S(n, k)$ is a Stirling number of the second kind. One way to
see this is by partitioning the parking preference sequence $\pi \in
\UPF(n)$ into $k \leq n$ noninteracting consecutive blocks while
reading $\pi$ from left to right. The parking outcomes of different
blocks have no effect on one another, and the first number assigned to
each block corresponds to a lucky car. For example, for $\pi=(4, 1,
1, 2, 6, 4)$, there are three blocks: $\{4, 4\}$, $\{1, 1, 2\}$, and
$\{6\}$. We see that car $1$ (with preference $4$), car $2$ (with
preference $1$), and car $5$ (with preference $6$) are lucky. Note
that $P_n(1)$ gives the Fubini numbers (ordered Bell numbers, or
number of ordered set partitions). 

\begin{theorem}\label{thm:UPF}
$P_n(y)$ satisfies the following recurrence relation:
\begin{equation*}
P_n(y)=(y+1)\sum_{i=0}^{n-1} \binom{n-1}{i} P_i(y) P_{n-i-1}(y)-yP_{n-1}(y).
\end{equation*}
\end{theorem}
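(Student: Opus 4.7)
The plan is to adapt the ``focus on the last car'' decomposition used in the proof of Theorem~\ref{un1-general}(a) to the unit interval setting. Imagine cars $1,\dots,n-1$ have all parked, occupying $n-1$ of the $n$ spots and leaving a single empty spot $i+1$ for some $0 \le i \le n-1$. Since car $n$ does not interact with the previous cars until after they have all parked, the displacements of cars $1,\dots,n-1$ are unaffected by car $n$, and the full sequence lies in $\UPF(n)$ if and only if (a) the first $n-1$ cars, viewed on their own, form a unit interval parking function on the $n-1$ available spots, and (b) the last car is displaced by at most one. Exactly as in the proof of Theorem~\ref{un1-general}(a), the parking protocol forces the first $n-1$ cars to split into two non-interacting unit interval parking functions $\alpha \in \UPF(i)$ (the cars parked in spots $1,\dots,i$) and $\beta \in \UPF(n-1-i)$ (the cars parked in spots $i+2,\dots,n$, translated), with $\binom{n-1}{i}$ counting the choice of index set.

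Next I would count the possibilities for the last car. Its preference $j$ must satisfy $j \le i+1$ so that the forward-only protocol brings it to the empty spot $i+1$, and the unit interval condition $i+1-j \le 1$ further constrains $j$. When $i=0$, the only option is $j=1$, which gives displacement zero and contributes a factor $1$. When $i \ge 1$, the preference is either $j=i+1$ (displacement zero, factor $1$) or $j=i$ (displacement one, factor $y$), for a combined factor $1+y$. Summing over $i$ yields
\begin{equation*}
P_n(y) \;=\; P_{n-1}(y) \;+\; (1+y)\sum_{i=1}^{n-1}\binom{n-1}{i} P_i(y)\,P_{n-1-i}(y).
\end{equation*}

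To match the stated form, I would extend the restricted sum to include $i=0$ by adding and subtracting $(1+y)P_{n-1}(y)$, obtaining
\begin{equation*}
P_n(y) \;=\; (1+y)\sum_{i=0}^{n-1}\binom{n-1}{i} P_i(y)\,P_{n-1-i}(y) \;+\; P_{n-1}(y) - (1+y)P_{n-1}(y),
\end{equation*}
which simplifies to the claimed recurrence. Sanity checks: $P_1(y)=1$ and $P_2(y)=y+2$ are both reproduced, matching direct enumeration of $\UPF(1)$ and $\UPF(2)$. The main subtlety is confirming that the unit interval constraint reduces the last car's valid preferences to $\{i,i+1\}$ for $i\ge 1$ (and just $\{1\}$ for $i=0$), and that the decomposition into $\alpha$ and $\beta$ preserves both the unit interval property and the count of displacement-one cars; both are immediate from the noninteracting block structure already exploited in Theorem~\ref{un1-general}(a).
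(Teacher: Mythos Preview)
Your proof is correct and follows essentially the same approach as the paper's own proof: both focus on the last car, decompose $(\pi_1,\dots,\pi_{n-1})$ into non-interacting unit interval parking functions $\alpha\in\UPF(i)$ and $\beta\in\UPF(n-1-i)$, observe that the last car contributes $y+1$ for $i\ge 1$ but only $1$ for $i=0$, and then correct the $i=0$ term by subtracting $yP_{n-1}(y)$. The only difference is cosmetic: the paper writes the full sum with factor $y+1$ and then subtracts the spurious $y$-contribution at $i=0$, whereas you first isolate the $i=0$ term and then algebraically absorb it into the sum.
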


\begin{proof}
As in the proof of Theorem \ref{un1-general}, we focus on the last
car. The parking protocol implies that $(\pi_1, \dots, \pi_{n-1})$ may
be decomposed into two unit interval parking functions $\alpha \in
\UPF(i)$ and $\beta \in \UPF(n-1-i)$, where $0\leq i\leq n-1$,
$\betaR$ is formed by subtracting $i+1$ from the relevant entries in
$\pi$, and $\alpha$ and $\beta$ do not interact with each other. For
$1\leq i\leq n-1$, this open spot $i+1$ could be either the same as
$j$, the preference of the last car, in which case the car parks
directly. Or, $i+1$ could be equal to $j+1$, in which case the car
travels forward to park. Combined, parking the last car contributes
$y+1$ to the generating function. We note that when $i=0$ (and so
$i+1=1$), the preference of the last car has to coincide with the open
spot. Subtracting $yP_{n-1}(y)$ from the generating function takes
care of this special case. 
\end{proof}

Let $Q(y, t)=\sum_{n=0}^\infty P_n(y) \frac{t^n}{n!}$ and $Q(y, 0)=1$. From Theorem \ref{thm:UPF}, differentiating with respect to $t$ yields
\begin{equation*}
\frac{\partial Q}{\partial t}=(y+1)Q^2-yQ.
\end{equation*}
This is known as a Bernoulli differential equation. To solve it, we follow standard techniques.
\begin{equation*}
Q^{-2} \frac{\partial Q}{\partial t}+Q^{-1} y=y+1.
\end{equation*}
We make a substitution $R=Q^{-1}$. Then
\begin{equation*}
-\frac{\partial R}{\partial t}+Ry=y+1.
\end{equation*}
Hence 
\begin{equation*}
R(y, t)=1+\frac{1}{y}\left(1-\exp(yt)\right) \text{ and } Q(y, t)=\frac{y}{y+1-\exp(yt)}.
\end{equation*}
Setting $y=1$, $Q(1, t)=1/(2-\exp(t))$ gives the exponential
generating function of Fubini numbers.

\subsection{Generic \boldmath{$u$}-parking functions}

Given a positive-integer-valued vector $\boldsymbol{u}=(u_1, \dots, u_m)$ with
$u_1<\cdots<u_m$, a $\boldsymbol{u}$-parking function of length $m$ is a sequence
$\pi=(\pi_1, \dots, \pi_m)$ of positive integers whose (weakly) increasing
rearrangement $(\lambda_1, \dots, \lambda_m)$ satisfies $\lambda_i\leq
u_i$ for all $1\leq i\leq m$. Recurrence relations for generalized
displacement in $\boldsymbol{u}$-parking functions have been widely studied; see
Yan \cite[Section 1.4.3]{Yan} for a summary of results. Let us derive
the recurrence relations for some other parking statistics under the
deterministic model. 

Let
\begin{equation*}
A_{\boldsymbol{u}}(x)= \sum_{\pi \in \PF(\boldsymbol{u})} x^{\unl(\pi)}, \hspace{.2cm}
B_{\boldsymbol{u}}(y)= \sum_{\pi \in \PF(\boldsymbol{u})} y^{\one(\pi)}, \hspace{.2cm}
\text{ and }C_{\boldsymbol{u}}(z)= \sum_{\pi \in \PF(\boldsymbol{u})} z^{\lel(\pi)}.
\end{equation*}
%Let
%\begin{equation*}
%P_{\boldsymbol{u}}(x, y)= \sum_{\pi \in \PF(\boldsymbol{u})} x^{\unl(\pi)} y^{\dis},
%\end{equation*}
%and let $P_i(x, y)$ be the corresponding parking statistics for a classical parking function of length $i$.

\begin{theorem}\label{un1}
$A_{\boldsymbol{u}}(x)$, $B_{\boldsymbol{u}}(y)$, and
  $C_{\boldsymbol{u}}(z)$ respectively satisfy the following
  recurrence relations: 
\begin{equation*}
A_{\boldsymbol{u}}(x)=\sum_{i=0}^{m-1} \binom{m-1}{i} (ix+(u_{i+1}-i))
A_{\boldsymbol{u}_1}(x) A_{\boldsymbol{u}_2}(x), 
\end{equation*}
%\begin{equation*}
%B_{\boldsymbol{u}}(y)=\sum_{i=0}^{m-1} \binom{m-1}{i} (1+y+\cdots+y^{u_{i+1}-1}) B_{\alphaR}(y) B_{\betaR}(y),
%\end{equation*}
\begin{equation*}
B_{\boldsymbol{u}}(y)=\sum_{i=0}^{m-1} \binom{m-1}{i} (y+(u_{i+1}-1))
B_{\boldsymbol{u}_1}(y) B_{\boldsymbol{u}_2}(1), 
\end{equation*}
\begin{equation*}
C_{\boldsymbol{u}}(z)=\sum_{i=0}^{m-1} \left( \binom{m-2}{i-1}
(z+(u_{i+1}-1)) C_{\boldsymbol{u}_1}(z)
C_{\boldsymbol{u}_2}(1)+\binom{m-2}{i} u_{i+1} C_{\boldsymbol{u}_1}(1)
C_{\boldsymbol{u}_2}(z) \right), 
\end{equation*}
where $\boldsymbol{u}_1=(u_1,\dots,u_i)$ and
$\boldsymbol{u}_2=(u_{i+2}-u_{i+1},\dots,u_m-u_{i+1})$. 
\end{theorem}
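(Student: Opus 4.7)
The plan is to extend the ``break-apart'' argument of Theorem~\ref{un1-general}(a) to the $\boldsymbol{u}$-parking setting. For each $i \in \{0, 1, \ldots, m-1\}$, I would decompose a $\boldsymbol{u}$-parking function by designating $i$ of the first $m-1$ cars (chosen in $\binom{m-1}{i}$ ways) as the \emph{left block}: their preferences lie in $\{1, \ldots, u_i\}$ and form a $\boldsymbol{u}_1$-parking function of length $i$, occupying $i$ spots within $\{1, \ldots, u_i\}$. The remaining $m-1-i$ cars form the \emph{right block}, with preferences in $\{u_{i+1}+1, \ldots, u_m\}$ that, after shifting by $u_{i+1}$, constitute a $\boldsymbol{u}_2$-parking function occupying $m-1-i$ spots within $\{u_{i+1}+1, \ldots, u_m\}$. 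Because no non-last car has a preference in the middle range $\{u_i+1, \ldots, u_{i+1}\}$, the two blocks park in disjoint regions and do not interact under the forward-only protocol. This yields a factorization of the form $\binom{m-1}{i}\cdot(\text{middle factor})\cdot (\text{left GF})\cdot(\text{right GF})$ for each $i$.

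For $A_{\boldsymbol{u}}(x)$, the middle factor is the contribution of the last car $C_m$, whose preference ranges over $\{1, \ldots, u_{i+1}\}$: the $i$ occupied spots (all in $\{1, \ldots, u_i\}$) contribute $x$ each and the $u_{i+1}-i$ empty spots contribute $1$ each, giving $ix+(u_{i+1}-i)$. For $B_{\boldsymbol{u}}(y)$, all right-block preferences exceed $u_{i+1}\geq 1$, so the right block carries no $y$-weight and gets evaluated at $y=1$; the middle factor is $y+(u_{i+1}-1)$ since $C_m$ prefers spot $1$ in exactly one of its $u_{i+1}$ options. For $C_{\boldsymbol{u}}(z)$, I would split further on whether $C_1$ falls in the left or right block. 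If $C_1$ is in the left ($\binom{m-2}{i-1}$ ways), then $\pi_1\leq u_i$, no right-block car can match $\pi_1$ (their preferences exceed $u_{i+1}$), and $C_m$ matches $\pi_1$ in exactly one of its $u_{i+1}$ preferences, yielding $z+(u_{i+1}-1)$ paired with $C_{\boldsymbol{u}_1}(z)\,C_{\boldsymbol{u}_2}(1)$. If $C_1$ is in the right ($\binom{m-2}{i}$ ways), then $\pi_1>u_{i+1}$, so neither the left block nor $C_m$ can match $\pi_1$, and the middle factor is simply $u_{i+1}$, paired with $C_{\boldsymbol{u}_1}(1)\,C_{\boldsymbol{u}_2}(z)$.

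The main obstacle is verifying the bijection: that every valid $\boldsymbol{u}$-parking function is recovered from exactly one index $i$. To establish this, I would sort the preferences $\mu_1\leq\cdots\leq\mu_{m-1}$ of $C_1,\ldots,C_{m-1}$ and define $i$ as the largest index with $\mu_j\leq u_j$ for all $j\leq i$; this choice forces $\mu_{i+1}>u_{i+1}$ by maximality, and hence $\mu_j>u_{i+1}$ for all $j>i$ by sortedness, matching the right-block preference condition. The shifted $\boldsymbol{u}_2$-parking condition $\mu_{i+\ell}\leq u_{i+1+\ell}$ on the right block then follows from the standard inequality $\pi_{(j)}\leq\lambda_j\leq u_j$ between the sorted preferences, the sorted occupied spots, and $\boldsymbol{u}$, combined with strict monotonicity of $\boldsymbol{u}$. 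A minor secondary point in the $C_{\boldsymbol{u}}$ recurrence is that when $C_1$ lies in a given block it is automatically that block's leading car, since $C_1$ has the smallest index overall, so the block's intrinsic leading-element statistic correctly tracks the global leading-element statistic.
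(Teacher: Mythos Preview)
Your approach is correct and is essentially the same decomposition as the paper's; both split off the last car and partition $\pi_1,\dots,\pi_{m-1}$ into a left block in $\PF(\boldsymbol{u}_1)$ and a shifted right block in $\PF(\boldsymbol{u}_2)$, then analyze the three statistics exactly as you describe. The one difference is in how the index $i$ is pinned down: the paper introduces the \emph{maximal parking completion} $\pi_m^{\max}$ for $\pi_1,\dots,\pi_{m-1}$, shows it must equal some $u_{i+1}$, and reads off $i$ from that, whereas you define $i$ directly as the largest index with $\mu_j\le u_j$ for the sorted sub-preferences $\mu$. These are equivalent (your $i$ has $\mu_i\le u_i$ and $\mu_{i+1}>u_{i+1}$, which forces the maximal completion to land at $u_{i+1}$), and your route is arguably a bit more direct since it avoids the auxiliary completion argument. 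One small point you leave implicit but should state: in the forward direction the constraint $\pi_m\le u_{i+1}$ must be verified---it follows because only $i$ of the first $m-1$ preferences lie below $u_{i+1}$, yet the full $\boldsymbol{u}$-condition forces at least $i+1$ entries of $\pi$ to do so. With that added, your bijection check (including uniqueness of $i$) is actually more explicit than the paper's.
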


%\textcolor{red}{The two end cases $i=0$ and $i=m-1$ need to be interpreted appropriately, as $\alphaR$ and $\betaR$ will respectively be empty.}

\begin{proof}
As in the proof of Theorem \ref{un1-general}, we focus on the last
car. We assume $\pi_m$ is the maximal parking completion for $\pi_1,
\dots, \pi_{m-1}$, i.e., $\pi=(\pi_1, \dots, \pi_{m-1}, \pi_m)$ is a
$\boldsymbol{u}$-parking function but $\pi'=(\pi_1, \dots, \pi_{m-1}, \pi_m+1)$
is not a $\boldsymbol{u}$-parking function. Let $\lambda=(\lambda_1, \dots,
\lambda_m)$ be the increasing rearrangement of $\pi$. Then for
some $i$ where $0\leq i\leq m-1$, we have $\pi_m=\lambda_{i+1} \leq
u_{i+1}$. If there are multiple entries in $\pi$ (and hence
$\lambda$) that take the same value as $\pi_m$, we may assume that
$i+1$ is the maximum such index, which implies that
$\lambda_{i+1}<\lambda_{i+2}$ if $i+1<m$. We claim that
$\lambda_{i+1}=u_{i+1}$. Suppose otherwise and that
$\lambda_{i+1}<u_{i+1}$; then the increasing rearrangement of
$\pi'$ is $(\lambda_1, \dots, \lambda_{i}, \lambda_{i+1}+1,
\lambda_{i+2}, \dots, \lambda_m)$, making $\pi'$ a $\boldsymbol{u}$-parking
function, which contradicts the assumption that $\pi_m$ is the maximal
parking completion. Since $u_1<\cdots<u_m$, we further have $\pi_m$ is
the unique entry in $\pi$ with $\pi_m=u_{i+1}$. Hence $\pi$ may be
decomposed into two parking functions: $\alphaR \in \PF(\boldsymbol{u}_1)$ and
$\betaR \in \PF(\boldsymbol{u}_2)$ where $\boldsymbol{u}_1=(u_1,\dots,u_i)$ and
$\boldsymbol{u}_2=(u_{i+2}-u_{i+1},\dots,u_m-u_{i+1})$, $\betaR$ is formed by
subtracting $u_{i+1}$ from the relevant entries in $\pi$, and
$\alphaR$ and $\betaR$ do not interact with each other. 

Now in the general case, $\pi_m$ is a parking completion for $\pi_1,
\dots, \pi_{m-1}$ but not necessarily maximal, so $1\leq \pi_m \leq
u_{i+1}$. We have 
\begin{equation*}
\unl(\pi)=\unl(\alphaR)+\unl(\betaR)+(\text{either } 1 \text{ or } 0),
\end{equation*}
where it is $1$ if $\pi_m$ coincides with any of the $i$ spots already
parked in by the $i$ cars that constitute $\alphaR$ and $0$
otherwise. We also have
\begin{equation*}
 \one(\pi)=\#\{i\colon  \alpha_i=1\}+(\text{either } 1 \text{ or } 0),
\end{equation*}
where it is $1$ if $\pi_m=1$ and $0$ otherwise. For both the
unluckiness and the $1$'s statistics, the binomial coefficient
$\binom{m-1}{i}$ chooses an $i$-element subset of $[m-1]$ to
constitute the index set of $\alphaR$. For the leading elements
statistics, depending on whether $\pi_1<u_{i+1}$ or $\pi_1>u_{i+1}$
(note that it is impossible for $\pi_1=u_{i+1}$), the count on
$\lel(\pi)$ would be different, as $\pi_m$ has the possibility of
equalling $\pi_1$ in the former case but no possibility of equalling $\pi_1$ in
the latter case. In the former case, the binomial coefficient
$\binom{m-2}{i-1}$ chooses an $(i-1)$-element subset of
$[m-1]\setminus\{1\}$, and together with the index $1$, they
constitute the index set of $\alphaR$, and 
\begin{equation*}
\lel(\pi)=\#\{i\colon  \alpha_i=\pi_1\}+(\text{either } 1 \text{ or } 0),
\end{equation*}
where it is $1$ if $\pi_m=\pi_1$ and $0$ otherwise. In the latter
case, the binomial coefficient $\binom{m-2}{i}$ chooses an $i$-element
subset of $[m-1]\setminus\{1\}$ to constitute the index set of
$\alphaR$, and 
\begin{equation*}
\lel(\pi)=\#\{i\colon  \beta_i=\pi_1-u_{i+1}\}.
\end{equation*}
\end{proof}

\old{\textcolor{red}{The following results will be discarded.}

\begin{proposition}\label{un2}
$P_{\boldsymbol{u}}(x, y)$ satisfies the following recurrence relation:
\begin{equation*}
P_{\boldsymbol{u}}(x, y)=\sum_{\sR \in C(m)} \binom{m}{\sR}
\prod_{i=1}^{u_m-m+1} P_{s_i}(x, y), 
\end{equation*}
where $C(m)$ consists of compositions of $m$: $\sR=(s_1, \dots,
s_{u_m-m+1}) \models m$ with $\sum_{i=1}^{u_m-m+1} s_i=m$, subject to
$s_1+\cdots+s_{u_i-i+1}\geq i$ for all $1\leq i\leq m$. 
\end{proposition}

%\textcolor{red}{I could also derive a recurrence formula when
%  $\boldsymbol{u}$ is only weakly increasing, but there are more
%  technicalities...} 

\begin{proof}
For a parking function $\pi\in \PF(\boldsymbol{u})$, there are $u_m-m$ parking
spots that are never attempted by any car. Let $j_i(\pi)$ for $i=1,
\dots, u_m-m$ represent these spots, so that
$0:=j_0<j_1<\cdots<j_{u_m-m}<j_{u_m-m+1}:=u_m+1$. Set $u_0=0$. This
separates $\pi$ into $u_m-m+1$ disjoint noninteracting segments (some
segments might be empty), with each segment a classical parking
function of length $(j_{i}-j_{i-1}-1)$ after translation, consisting
of $\pi_k \to \pi_k-j_{i-1}$ for $j_{i-1}<\pi_k<j_i$. Additionally,
there are some constraints imposed on the $j_i$'s. Let $\tauR(\pi)$
denote the parking outcome of $\pi$, where the $i$th car parks in spot
$\tau_i$ with $1\leq \tau_i\leq u_m$. Since $\boldsymbol{u}$ is strictly
increasing, the increasing rearrangement $\lambdaR=(\lambda_1, \dots,
\lambda_m)$ of $\tauR$ satisfies $\lambda_i \leq u_i$ for all $1\leq
i\leq m$. We note that $\lambda_i \leq u_i$ is equivalent to saying
that there are at least $i$ taken spots within the first $u_i$ spots,
which is further equivalent to having at most $u_i-i$ empty spots
within the first $u_i$ spots. This is achieved if and only if
$j_{u_i-i+1}>u_i$. We then build upon the Kreweras recurrence for
classical parking functions (\textit{in our earlier email exchange}): 
\begin{align*}
&P_{\boldsymbol{u}}(x, y)=\sum_{\forall i\in [m]: \hspace{.1cm} j_{u_i-i+1}>u_i} \prod_{i=1}^{u_m-m+1} \binom{m}{j_1-j_0-1, \dots, j_{u_m-m+1}-j_{u_m-m}-1} P_{j_{i}-j_{i-1}-1}(x, y) \notag \\
&=\sum_{\substack{\sR \models m \\ s_1+\cdots+s_{u_i-i+1}\geq i \hspace{.1cm} \forall i\in [m]}} \binom{m}{s_1, \dots, s_{u_m-m+1}} \prod_{i=1}^{u_m-m+1} P_{s_i}(x, y),
\end{align*}
where $\sR=(j_1-j_0-1, \dots, j_{u_m-m+1}-j_{u_m-m}-1)$.
\end{proof}

\begin{example}
Take $\boldsymbol{u}=(2, 5)$. Then $(\pi_1, \pi_2) \in \PF(\boldsymbol{u})$ satisfies
\begin{align*}
&(\pi_1, \pi_2) \in A:= \{(1, 1), (1, 2), (1, 3), (1, 4), (1, 5), (2, 1), \notag \\
&(2, 2), (2, 3), (2, 4), (2, 5), (3, 1), (3, 2), (4, 1), (4, 2), (5, 1), (5, 2)\}.
\end{align*}
From Theorem \ref{un2},
\begin{align*}
&P_{\boldsymbol{u}}(x, y)=\binom{2}{2, 0, 0, 0} P_2(x, y)+\binom{2}{0, 2, 0, 0} P_2(x, y)+\binom{2}{1, 1, 0, 0} (P_1(x, y))^2 \notag \\
&+\binom{2}{1, 0, 1, 0} (P_1(x, y))^2+\binom{2}{1, 0, 0, 1} (P_1(x, y))^2+\binom{2}{0, 1, 1, 0} (P_1(x, y))^2+\binom{2}{0, 1, 0, 1} (P_1(x, y))^2 \notag \\
&=2xy+14.
\end{align*}
\end{example}

\begin{proposition}
For classical parking functions, we have
\begin{eqnarray*}
\exp\left(\sum_{n=1}^\infty \frac{t^n}{n!} (1+xy+\cdots+xy^{n-1}) P_{n-1}(x, y)\right)=\sum_{n=0}^\infty \frac{t^n}{n!} P_n(x, y).
\end{eqnarray*}
\end{proposition}

\begin{proof}
This may be proved via the (E.T.) Bell multivariate polynomial, where $B_0=P_0=1$, and
\begin{eqnarray*}
B_n(x_1, \dots, x_n)=P_n(x, y),
\end{eqnarray*}
\begin{eqnarray*}
x_i=(1+xy+\cdots+xy^{i-1}) P_{i-1}(x, y).
\end{eqnarray*}
\end{proof}

Alternatively, setting $Q_n(x, y)=(y-1)^n P_n(x, y)$, we have
\begin{equation*}
\exp\left(\sum_{n=1}^\infty \frac{t^n}{n!} \left(x(y^n-1)-(x-1)(y-1)\right) Q_{n-1}(x, y)\right)=\sum_{n=0}^\infty \frac{t^n}{n!} Q_n(x, y).
\end{equation*}
Let
\begin{equation*}
\sum_{n=0}^\infty \frac{t^n}{n!} a_n(x, y)=\exp\left(\sum_{n=1}^\infty \frac{t^n}{n!} x Q_{n-1}(x, y)\right).
\end{equation*}
Differentiating with respect to $t$ yields
\begin{align*}
&\sum_{n=0}^\infty \frac{t^n}{n!} a_{n+1}(x, y)=\exp\left(\sum_{n=1}^\infty \frac{t^n}{n!} x Q_{n-1}(x, y)\right)\cdot\left(\sum_{n=0}^\infty \frac{t^n}{n!} x Q_{n}(x, y)\right) \notag \\
&=x\exp\left(\sum_{n=1}^\infty \frac{t^n}{n!} \left(xy^n-(x-1)(y-1)\right) Q_{n-1}(x, y)\right) \notag \\
&=x\left(\sum_{n=0}^\infty \frac{t^n}{n!} y^n a_{n}(x, y)\right) \cdot \dots
\end{align*}}

\appendix

\begin{table}%
\makebox[\linewidth]{
    \begin{tabular}{|c|c|c|c|c|}
    \hline
    \begin{minipage}{.2\textwidth}
    \begin{center}
    Pr\"{u}fer code
    \end{center}
    \end{minipage}
    &
    \begin{minipage}{.2\textwidth}
    \begin{center}
    labelled tree
\end{center}
\end{minipage}
&
    \begin{minipage}{.2\textwidth}
    \begin{center}
    parking function (repeats)
    \end{center}
    \end{minipage}
    &
    \begin{minipage}{.2\textwidth}
    \begin{center}
    parking function (leading elements)
    \end{center}
    \end{minipage}
    &
    \begin{minipage}{.2\textwidth}
    \begin{center}
    parking function (1's)
    \end{center}
    \end{minipage}\\
    \hline
    \begin{minipage}{.2\textwidth}
    \begin{center}
    00
    \end{center}
    \end{minipage}
    &
    \begin{minipage}{.2\textwidth}
    \begin{center}
    \begin{tikzpicture}[scale=0.6]
\node at (0,0) {0} [grow = down]
    child {node{1} edge from parent [thick]}
    child {node{2} edge from parent [thick]}
    child {node{3} edge from parent [thick]};
\end{tikzpicture}
\end{center}
\end{minipage}
&
    \begin{minipage}{.2\textwidth}
    \begin{center}
    111
    \end{center}
    \end{minipage}
    &
    \begin{minipage}{.2\textwidth}
    \begin{center}
    111
    \end{center}
    \end{minipage}
    &
    \begin{minipage}{.2\textwidth}
    \begin{center}
    111
    \end{center}
    \end{minipage}\\
    \hline
    \begin{minipage}{.2\textwidth}
    \begin{center}
    01
    \end{center}
    \end{minipage}
    &
    \begin{minipage}{.2\textwidth}
    \begin{center}
    \begin{tikzpicture}[scale=0.6]
\node at (0,0) {0} [grow = down]
    child {node{2} edge from parent [thick]}
    child {node{1} edge from parent [thick] child {node{3} edge from parent [thick]}};
\end{tikzpicture}
\end{center}
\end{minipage}
    &
    \begin{minipage}{.2\textwidth}
    \begin{center}
    112
    \end{center}
    \end{minipage}
    &
    \begin{minipage}{.2\textwidth}
    \begin{center}
    112
    \end{center}
    \end{minipage}
    &
    \begin{minipage}{.2\textwidth}
    \begin{center}
    112
    \end{center}
    \end{minipage}\\
    \hline
    \begin{minipage}{.2\textwidth}
    \begin{center}
    02
    \end{center}
    \end{minipage}
    &
    \begin{minipage}{.2\textwidth}
    \begin{center}
    \begin{tikzpicture}[scale=0.6]
\node at (0,0) {0} [grow = down]
    child {node{1} edge from parent [thick]}
    child {node{2} edge from parent [thick] child {node{3} edge from parent [thick]}};
\end{tikzpicture}
\end{center}
\end{minipage}
&
    \begin{minipage}{.2\textwidth}
    \begin{center}
    113
    \end{center}
    \end{minipage}
    &
    \begin{minipage}{.2\textwidth}
    \begin{center}
    113
    \end{center}
    \end{minipage}
    &
    \begin{minipage}{.2\textwidth}
    \begin{center}
    113
    \end{center}
    \end{minipage}\\
    \hline
    \begin{minipage}{.2\textwidth}
    \begin{center}
    03
    \end{center}
    \end{minipage}
    &
    \begin{minipage}{.2\textwidth}
    \begin{center}
    \begin{tikzpicture}[scale=0.6]
\node at (0,0) {0} [grow = down]
    child {node{1} edge from parent [thick]}
    child {node{3} edge from parent [thick] child {node{2} edge from parent [thick]}};
\end{tikzpicture}
\end{center}
\end{minipage}
&
    \begin{minipage}{.2\textwidth}
    \begin{center}
    221
    \end{center}
    \end{minipage}
    &
    \begin{minipage}{.2\textwidth}
    \begin{center}
    221
    \end{center}
    \end{minipage}
    &
    \begin{minipage}{.2\textwidth}
    \begin{center}
    131
    \end{center}
    \end{minipage}\\
    \hline
    \begin{minipage}{.2\textwidth}
    \begin{center}
    10
    \end{center}
    \end{minipage}
    &
    \begin{minipage}{.2\textwidth}
    \begin{center}
    \begin{tikzpicture}[scale=0.6]
\node at (0,0) {0} [grow = down]
    child {node{3} edge from parent [thick]}
    child {node{1} edge from parent [thick] child {node{2} edge from parent [thick]}};
\end{tikzpicture}
\end{center}
\end{minipage}
&
    \begin{minipage}{.2\textwidth}
    \begin{center}
    122
    \end{center}
    \end{minipage}
    &
    \begin{minipage}{.2\textwidth}
    \begin{center}
    121
    \end{center}
    \end{minipage}
    &
    \begin{minipage}{.2\textwidth}
    \begin{center}
    121
    \end{center}
    \end{minipage}\\
    \hline
    \begin{minipage}{.2\textwidth}
    \begin{center}
    11
    \end{center}
    \end{minipage}
    &
    \begin{minipage}{.2\textwidth}
    \begin{center}
    \begin{tikzpicture}[scale=0.6]
\node at (0,0) {0} [grow = down]
    child {node{1} edge from parent [thick] child {node{2} edge from parent [thick]} child {node{3} edge from parent [thick]}};
\end{tikzpicture}
\end{center}
\end{minipage}
&
    \begin{minipage}{.2\textwidth}
    \begin{center}
    123
    \end{center}
    \end{minipage}
    &
    \begin{minipage}{.2\textwidth}
    \begin{center}
    122
    \end{center}
    \end{minipage}
    &
    \begin{minipage}{.2\textwidth}
    \begin{center}
    122
    \end{center}
    \end{minipage}\\
    \hline
    \begin{minipage}{.2\textwidth}
    \begin{center}
    12
    \end{center}
    \end{minipage}
    &
    \begin{minipage}{.2\textwidth}
    \begin{center}
    \begin{tikzpicture}[scale=0.6]
\node at (0,0) {0} [grow = down]
    child {node{1} edge from parent [thick] child {node{2} edge from parent [thick] child {node{3} edge from parent [thick]}}};
\end{tikzpicture}
\end{center}
\end{minipage}
&
    \begin{minipage}{.2\textwidth}
    \begin{center}
    231
    \end{center}
    \end{minipage}
    &
    \begin{minipage}{.2\textwidth}
    \begin{center}
    123
    \end{center}
    \end{minipage}
    &
    \begin{minipage}{.2\textwidth}
    \begin{center}
    123
    \end{center}
    \end{minipage}\\
    \hline
    \begin{minipage}{.2\textwidth}
    \begin{center}
    13
    \end{center}
    \end{minipage}
    &
    \begin{minipage}{.2\textwidth}
    \begin{center}
    \begin{tikzpicture}[scale=0.6]
\node at (0,0) {0} [grow = down]
    child {node{1} edge from parent [thick] child {node{3} edge from parent [thick] child {node{2} edge from parent [thick]}}};
\end{tikzpicture}
\end{center}
\end{minipage}
&
    \begin{minipage}{.2\textwidth}
    \begin{center}
    121
    \end{center}
    \end{minipage}
    &
    \begin{minipage}{.2\textwidth}
    \begin{center}
    231
    \end{center}
    \end{minipage}
    &
    \begin{minipage}{.2\textwidth}
    \begin{center}
    132
    \end{center}
    \end{minipage}\\
    \hline
\end{tabular}
}
\caption{Correspondence table for $n=3$.}
\label{tree-parking:illustration}
\end{table}

\begin{table}%
\makebox[\linewidth]{
    \begin{tabular}{|c|c|c|c|c|}
    \hline
    \begin{minipage}{.2\textwidth}
    \begin{center}
    20
    \end{center}
    \end{minipage}
    &
    \begin{minipage}{.2\textwidth}
    \begin{center}
    \begin{tikzpicture}[scale=0.6]
\node at (0,0) {0} [grow = down]
    child {node{3} edge from parent [thick]}
    child {node{2} edge from parent [thick] child {node{1} edge from parent [thick]}};
\end{tikzpicture}
\end{center}
\end{minipage}
&
    \begin{minipage}{.2\textwidth}
    \begin{center}
    311
    \end{center}
    \end{minipage}
    &
    \begin{minipage}{.2\textwidth}
    \begin{center}
    131
    \end{center}
    \end{minipage}
    &
    \begin{minipage}{.2\textwidth}
    \begin{center}
    211
    \end{center}
    \end{minipage}\\
    \hline
    \begin{minipage}{.2\textwidth}
    \begin{center}
    21
    \end{center}
    \end{minipage}
      &
    \begin{minipage}{.2\textwidth}
    \begin{center}
    \begin{tikzpicture}[scale=0.6]
\node at (0,0) {0} [grow = down]
    child {node{2} edge from parent [thick] child {node{1} edge from parent [thick] child {node{3} edge from parent [thick]}}};
\end{tikzpicture}
\end{center}
\end{minipage}
&
    \begin{minipage}{.2\textwidth}
    \begin{center}
    312
    \end{center}
    \end{minipage}
    &
    \begin{minipage}{.2\textwidth}
    \begin{center}
    132
    \end{center}
    \end{minipage}
    &
    \begin{minipage}{.2\textwidth}
    \begin{center}
    213
    \end{center}
    \end{minipage}\\
    \hline
    \begin{minipage}{.2\textwidth}
    \begin{center}
    22
    \end{center}
    \end{minipage}
     &
    \begin{minipage}{.2\textwidth}
    \begin{center}
    \begin{tikzpicture}[scale=0.6]
\node at (0,0) {0} [grow = down]
    child {node{2} edge from parent [thick] child{node{1} edge from parent [thick]} child {node{3} edge from parent [thick]}};
\end{tikzpicture}
\end{center}
\end{minipage}
&
    \begin{minipage}{.2\textwidth}
    \begin{center}
    131
    \end{center}
    \end{minipage}
    &
    \begin{minipage}{.2\textwidth}
    \begin{center}
    311
    \end{center}
    \end{minipage}
    &
    \begin{minipage}{.2\textwidth}
    \begin{center}
    212
    \end{center}
    \end{minipage}\\
    \hline
    \begin{minipage}{.2\textwidth}
    \begin{center}
    23
    \end{center}
    \end{minipage}
      &
    \begin{minipage}{.2\textwidth}
    \begin{center}
    \begin{tikzpicture}[scale=0.6]
\node at (0,0) {0} [grow = down]
    child {node{2} edge from parent [thick] child {node{3} edge from parent [thick] child {node{1} edge from parent [thick]}}};
\end{tikzpicture}
\end{center}
\end{minipage}
&
    \begin{minipage}{.2\textwidth}
    \begin{center}
    132
    \end{center}
    \end{minipage}
    &
    \begin{minipage}{.2\textwidth}
    \begin{center}
    312
    \end{center}
    \end{minipage}
    &
    \begin{minipage}{.2\textwidth}
    \begin{center}
    312
    \end{center}
    \end{minipage}\\
    \hline
    \begin{minipage}{.2\textwidth}
    \begin{center}
    30
    \end{center}
    \end{minipage}
     &
    \begin{minipage}{.2\textwidth}
    \begin{center}
    \begin{tikzpicture}[scale=0.6]
\node at (0,0) {0} [grow = down]
    child {node{2} edge from parent [thick]}
    child {node{3} edge from parent [thick] child {node{1} edge from parent [thick]}};
\end{tikzpicture}
\end{center}
\end{minipage}
&
    \begin{minipage}{.2\textwidth}
    \begin{center}
    211
    \end{center}
    \end{minipage}
    &
    \begin{minipage}{.2\textwidth}
    \begin{center}
    212
    \end{center}
    \end{minipage}
    &
    \begin{minipage}{.2\textwidth}
    \begin{center}
    311
    \end{center}
    \end{minipage}\\
    \hline
    \begin{minipage}{.2\textwidth}
    \begin{center}
    31
    \end{center}
    \end{minipage}
     &
    \begin{minipage}{.2\textwidth}
    \begin{center}
    \begin{tikzpicture}[scale=0.6]
\node at (0,0) {0} [grow = down]
    child {node{3} edge from parent [thick] child {node{1} edge from parent [thick] child {node{2} edge from parent [thick]}}};
\end{tikzpicture}
\end{center}
\end{minipage}
&
    \begin{minipage}{.2\textwidth}
    \begin{center}
    212
    \end{center}
    \end{minipage}
    &
    \begin{minipage}{.2\textwidth}
    \begin{center}
    213
    \end{center}
    \end{minipage}
    &
    \begin{minipage}{.2\textwidth}
    \begin{center}
    231
    \end{center}
    \end{minipage}\\
    \hline
    \begin{minipage}{.2\textwidth}
    \begin{center}
    32
    \end{center}
    \end{minipage}
     &
    \begin{minipage}{.2\textwidth}
    \begin{center}
    \begin{tikzpicture}[scale=0.6]
\node at (0,0) {0} [grow = down]
    child {node{3} edge from parent [thick] child {node{2} edge from parent [thick] child {node{1} edge from parent [thick]}}};
\end{tikzpicture}
\end{center}
\end{minipage}
&
    \begin{minipage}{.2\textwidth}
    \begin{center}
    213
    \end{center}
    \end{minipage}
    &
    \begin{minipage}{.2\textwidth}
    \begin{center}
    321
    \end{center}
    \end{minipage}
    &
    \begin{minipage}{.2\textwidth}
    \begin{center}
    321
    \end{center}
    \end{minipage}\\
    \hline
    \begin{minipage}{.2\textwidth}
    \begin{center}
    33
    \end{center}
    \end{minipage}
     &
    \begin{minipage}{.2\textwidth}
    \begin{center}
    \begin{tikzpicture}[scale=0.6]
\node at (0,0) {0} [grow = down]
    child {node{3} edge from parent [thick] child {node{1} edge from parent [thick]} child {node{2} edge from parent [thick]}};
\end{tikzpicture}
\end{center}
\end{minipage}
&
    \begin{minipage}{.2\textwidth}
    \begin{center}
    321
    \end{center}
    \end{minipage}
    &
    \begin{minipage}{.2\textwidth}
    \begin{center}
    211
    \end{center}
    \end{minipage}
    &
    \begin{minipage}{.2\textwidth}
    \begin{center}
    221
    \end{center}
    \end{minipage}\\
    \hline
    \end{tabular}
    }
\end{table}

\end{document}